\newtheorem{thm}{Theorem}[section]
\theoremstyle{plain}
\newtheorem{lem}[thm]{Lemma}
\newtheorem{prop}[thm]{Proposition}
\newtheorem{cor}[thm]{Corollary}
\newtheorem{dfn}[thm]{Definition}
\theoremstyle{definition}
\newtheorem{example}[thm]{Example}
\newtheorem{rmk}[thm]{Remark}
\theoremstyle{remark}
\numberwithin{equation}{section}
\newcommand{\A}{\mathcal A}
\newcommand{\Ap}{{\mathcal A^\pi}}
\newcommand{\C}{\mathbb C}
\newcommand{\N}{\mathbb N}
\newcommand{\mfh}{\mathfrak{h}}
\newcommand{\la}{\lambda}
\newcommand{\osp}{\mathfrak{osp}}
\newcommand{\ev}[1]{{#1}_{0}}
\newcommand{\od}[1]{{#1}_{1}}
\newcommand{\Q}{{\mathbb{Q}}}
\newcommand{\Z}{{\mathbb Z}}
\newcommand{\Qq}{{\Q(q)}}
\newcommand{\Qqp}{{\Q(q)^\pi}}
\newcommand{\Qp}{{\mathbb{Q}^\pi}}
\newcommand{\Zp}{{\Z^\pi}}
\newcommand{\Zpqqi}{\Zp[q,q^{-1}]}
\newcommand{\bB}{{\mathbf{B}}}
\newcommand{\bF}{{\mathbf{B}}^\bullet}
\newcommand{\cL}{{\mathcal{L}}}
\newcommand{\curlyO}{{\mathcal{O}}}
\newcommand{\curlyP}{{\mathcal{P}}}
\newcommand{\curlyC}{{\mathcal{C}}}
\newcommand{\ff}{{\bf f}}
\newcommand{\UU}{{\bf U}}
\newcommand{\UUZ}{{}_\Z{\bf U}}
\newcommand{\Uqp}{\UU_{q,\pi}}
\newcommand{\bbinom}[2]{\begin{bmatrix}#1 \\ #2\end{bmatrix}}
\newcommand{\cbinom}[2]{\set{\!\!\!\begin{array}{c} #1 \\ #2\end{array}\!\!\!}}
\newcommand{\abinom}[2]{\ang{\!\!\!\begin{array}{c} #1 \\ #2\end{array}\!\!\!}}
\newcommand{\qfact}[1]{[#1]^!}
\newcommand{\ir}{{\vphantom{|}_i r}}
\newcommand{\ri}{{r_i}}
\newcommand{\End}{{\operatorname{End}}}
\newcommand{\Rad}{{\operatorname{Rad}}}
\newcommand{\im}{{\operatorname{im}}}
\newcommand{\diag}{{\operatorname{diag}}}
\newcommand{\height}{{\operatorname{ht}}}
\newcommand{\sgn}{{\operatorname{sgn}}}
\newcommand{\catO}{{\mathcal{O}}}
\newcommand{\catOint}{{\mathcal{O}^{\rm int}}}
\newcommand{\zero}{{\bar{0}}}
\newcommand{\one}{{\bar{1}}}
\newcommand{\parity}[1]{{p(#1)}}
\newcommand{\Ad}{{\rm{Ad}}}
\newcommand{\tK}{{\widetilde{K}}}
\newcommand{\tJ}{{\widetilde{J}}}
\newcommand{\te}{{\tilde{e}}}
\newcommand{\tf}{{\tilde{f}}}
\newcommand{\set}[1]{\left\{#1\right\}}
\newcommand{\parens}[1]{\left(#1\right)}
\newcommand{\ang}[1]{\left\langle#1\right\rangle}
\renewcommand{\bar}[1]{\overline{#1}}
\begin{document}
\title{\textsc{Quantum supergroups II. Canonical basis}}

\author[Clark, Hill, Wang]{Sean Clark, David Hill, and Weiqiang Wang}
\address{Department of Mathematics, University of Virginia, Charlottesville, VA 22904}
\email{sic5ag@virginia.edu (Clark), \quad deh4n@virginia.edu (Hill), \quad ww9c@virginia.edu (Wang)}

\subjclass[2010]{Primary 17B37.}

\begin{abstract}
Following Kashiwara's algebraic approach,
we construct crystal bases and canonical bases for quantum supergroups of anisotropic type and for their integrable modules.
\end{abstract}

\keywords{Quantum  supergroups, crystal basis, canonical basis.}

\maketitle

\setcounter{tocdepth}{1}
 \tableofcontents

\section{Introduction}

The theory of canonical bases for quantum groups and their integrable modules was introduced by Lusztig
and subsequently by Kashiwara  through a different crystal basis approach  (see \cite{Lu1, Lu2, Ka}).
Among many applications, the
canonical bases have recently played an important role in categorification.

For quantum supergroups, there have been some combinatorial
constructions of crystal bases. For quantum $\osp(1|2n)$, these were constructed for the integrable modules
in \cite{Zou} and in Musson-Zou \cite{MZ};
also see Jeong \cite{Jeo} for a generalization which we will discuss below.  For quantum $\mathfrak{gl}(m|n)$,  crystal bases
for the polynomial representations were  obtained by Benkart-Kang-Kashiwara \cite{BKK}.
More recently, crystal bases for a class of infinite-dimensional simple modules of quantum $\osp(r|2n)$
have been constructed in \cite{Kw}. 
However, none of the authors constructed a crystal basis for
the negative part of these quantum supergroups.
The conventional wisdom among experts seemed to favor
the non-existence of canonical basis (or global crystal basis) for quantum supergroups -- until our recent announcement  \cite{HW, CW}.

The goal of this paper is to systematically develop a theory of canonical bases for half a quantum supergroup
and the associated integrable modules for the first time.
In the super setting Lusztig's geometric approach  is not applicable directly.
Instead, we follow Kashiwara's algebraic approach \cite{Ka} as a blueprint for this paper.

The class of quantum supergroups $\UU$ considered in this paper is of {\em anisotropic type} 
(which means no isotropic odd simple roots)
and satisfies an additional bar-consistent condition (see \cite{CHW} for a foundation of  
quantum supergroups of anisotropic type in which we generalized Lusztig \cite[Chapter~1]{Lu2}; also see earlier related
work \cite{Ya, BKM}). This class of anisotropic type quantum supergroups 
includes quantum $\osp(1|2n)$ as the only finite type examples.
As noted by two of the authors \cite{HW},  it is conceptual to introduce what we call quantum covering groups
with an additional formal parameter $\pi$ with $\pi^2=1$ in place of a super sign for this class of quantum supergroups, and
the quantum covering groups (and respectively, the quantum supergroups) afford a novel bar involution which sends $q\mapsto \pi q^{-1}$
(and respectively, by a specialization $\pi \mapsto -1$).
Our work \cite{HW, CW} was motivated by \cite{EKL, KKT, Wa} and in turn led to new development 
in categorification of quantum covering and super groups \cite{KKO, EL}.

Kashiwara's approach starts with developing a combinatorial theory of crystal basis, which is roughly speaking a basis at $q=0$.
An earlier paper of Jeong \cite{Jeo} constructs crystal bases for integrable modules $V(\la)$ for (a variant of)
quantum supergroups of anisotropic type. However, our work
differs from {\em loc. cit.} in several aspects. First, we deal with the integrable modules $V(\la)$ for all dominant
integral weights $\la \in P^+$ while Jeong put a restriction on $\la$ to a subset of ``even" dominant weights
(as inherited from \cite{Kac, BKM}). Some extra generators $J_\mu$   of $\UU$ introduced in \cite{CHW} 
(which was inspired by \cite{CW}) make the constructions
of $V(\la)$ over $\Q(q)$ for all $\la\in P^+$ possible. Secondly, the odd rank one quantum $\osp(1|2)$ \cite{CW} admits a $2$-dimensional simple module $V(1)$
(here $1$ is an ``odd" weight unavailable in \cite{Jeo}),
which plays a basic role in developing the tensor product rule of crystal bases.
The tensor product rule in the super setting as developed in \cite{Jeo} requires some fixing of super signs,
and the proof therein is complicated for lack of this $2$-dimensional module.
(There is also an earlier version of tensor product rule in \cite{MZ}, where the authors had to work with modules over $\C(q)$ instead of $\Q(q)$.)

Here is the layout of the paper, and we mention explicitly when Kashiwara's approach requires more
noticeable modifications along the way.

In Section~\ref{sec:supergroups}, we set up the notations of quantum
covering/super groups and provide a quick review of the
basics as developed in \cite{CHW}.  Two coproducts of $\UU$ (differing from each other by some $J_\mu$ operators),
both corresponding to the one used in Kashiwara's approach, are introduced here.
In Section~\ref{sec:qpiboson}, a $(q,\pi)$-boson algebra is introduced in order to formulate the crystal basis for $\UU^-$, and
its basic properties are established. In particular, we introduce a bilinear form (called polarization) on $\UU^-$ and
Kashiwara's operators on $\UU^-$.   

In Section~\ref{sec:CBpolar}, we formulate the notion of crystal lattice and crystal basis of integrable modules suitable in the super setting,
a variant of which goes back to \cite{BKK} (for polynomial representations of quantum $\mathfrak{gl}(m|n)$).
 A polarization on an integrable module is formulated, and we note an unusual
super phenomenon of polarization on a tensor product of two integrable modules.
In addition, we establish the tensor product rule of crystal basis in which  $\pi$ appears.
We formulate the main theorems of crystal bases parallel to Kashiwara's.
In Section~\ref{sec:grandloop}, we adapt Kashiwara's grant loop inductive argument to prove the main theorems of crystal bases.

In Section~\ref{sec:proppol}, we study further properties of polarization. We show that the crystal basis is $\pi$-orthonormal
in the sense of Definition~\ref{dfn:pibasis},
but in general not orthonormal in the usual sense, with respect to the polarization at $q=0$; in particular, the polarization at $q=0$
is not positive definite. This leads to a key difference in the super setting that neither a crystal lattice nor signed crystal basis in general affords
an orthonormality characterization. In the usual quantum group setting such an orthonormality was established by Lusztig and Kashiwara  \cite{Lu1, Lu2, Ka},
and it readily implies another fundamental fact that the crystal lattice $\cL(\infty)$ on $\UU^-$ is preserved by
the anti-involution $\varrho$ which fixes each Chevalley generator $F_i$.
In the super setting, it continues to be true that
 the crystal lattice $\cL(\infty)$ is $\varrho$-stable. 
The proof of this fact is postponed to \cite{CFLW}, as it requires tools somewhat outside the setting of this paper
and it follows most readily from the connection between $\UU$ and the usual quantum groups developed in {\em loc. cit.}
(This connection is closely related to a remarkable connection between
$2$-parameter quantum groups and the usual quantum groups developed by Fan and Li \cite{FL}.)

In Section~\ref{sec:CB}, we establish the existence of canonical bases for $\UU^-$
and all integrable modules $V(\la)$ with $\la \in P^+$.

By $\UU$-module  in this paper we always mean
a $\UU$-module with a $\Z_2$-grading which is compatible with the action of
the superalgebra. By $\UU$-homomorphism, we always mean a $\Z_2$-graded linear map whose
$\Z_2$-homogeneous parts supercommute with the $\UU$-action.

\vspace{.3cm}

\noindent {\bf Acknowledgement.}
Both first and third authors gratefully acknowledge the
support and stimulating environment at the Institute of Mathematics, Academia Sinica, Taipei, during their visits in Spring 2013.
The first author is partially supported by the Semester Fellowship from Department of Mathematics, University of Virginia (UVA).
The third author is partially supported by NSF DMS-1101268 and the UVA Sesqui Fellowship.

\vspace{.3cm}


\section{Quantum covering and super groups}
\label{sec:supergroups}

\subsection{Super generalized Cartan matrix (SGCM)}
\label{subsec:SGCM}

Let $\pi$ and $q$ be formal indeterminants such that $\pi^2=1$.
For a subring $R$ of the rational function field $\Qq$, define a new ring
$$
R^\pi =R\otimes_\Z \Z[\pi].
$$
 We note the following properties of this ring:
$R^\pi$ is a subring of $\Qqp$; $x\in R^\pi$ is a zero divisor if and only if $x=r(\pi\pm 1)$ for some $r\in R$.

For $n\in\Z$ and $a\in\N$, we define the  {\em $(q,\pi)$-integer}
\begin{equation*}
 [n]=\frac{(\pi q)^n-q^{-n}}{\pi q - q^{-1}}\in \Zpqqi,
\end{equation*}
and then define the corresponding $(q,\pi)$-factorials and $(q,\pi)$-binomial coefficients
for $a\in \N$ by
\begin{equation*}
\qfact{a}=\prod_{i=1}^a [i],\qquad
\bbinom{n}{a}=\frac{\prod_{i=1}^a[n+i-a]}{\qfact{a}}.
\end{equation*}
We adopt the convention that $\qfact{0}=1$. Note that
$\bbinom{n}{a}=\frac{\qfact{n}}{\qfact{a}\qfact{n-a}}$, for  $n\geq
a\geq 0$.
There is a unique $\Qp$-linear map
\begin{equation}\label{eqn:barinv}
\psi:\Qqp\longrightarrow \Qqp,\quad q\mapsto \pi q^{-1}.
\end{equation}
We will also use the notation
$\bar{f(q)}:=\psi(f(q))$ and call this map the bar involution.
Note that the induced map on $\Qqp/\ang{\pi-1} \cong \Qq$ is the usual
bar involution.

Let $I=\ev I \coprod \od I$ such that $\od I\neq \emptyset$.
We define the parity function
$\parity{\cdot}: I\rightarrow \Z_2$
via $\parity{i}= t$ if $i\in I_{t}$.

Assume $|I|=\ell$.
We call a matrix $A=(a_{ij})_{i,j\in I}$ a super generalized
Cartan matrix (SGCM) of {\em anisotropic type}
if $A$ satisfies the following conditions:

\begin{enumerate}
    \item[(a)] $a_{ii}=2$ for $i\in I$;
    \item[(b)] $a_{ij}\in -\N$ for $i\neq j\in I$;
    \item[(c)] $a_{ij}=0$ if and only if $a_{ji}=0$ for $i,j\in I$;
    \item[(d)] $a_{ij}\in 2\Z$ if $i\in \od I$ and $j\in I$;
    \item[(e)] there exists an invertible diagonal matrix
            $D=\diag(d_i:i\in I)$ with
            $d_i\in \Z_{> 0}$ and
            $\gcd(d_i:i\in I)=1$
            such that $DA$ is symmetric.
\end{enumerate}
We associate to a SGCM of anisotropic type the following data:
\begin{itemize}
\item a finite dimensional $\Q$-vector space $\mfh$;
\item linearly independent subsets $\set{\alpha_i\in \mfh^*|i\in I}$ and
        $\set{\alpha^\vee_i\in \mfh|i\in I}$ such that
        \[\ang{\alpha^\vee_i,\alpha_j} =a_{ij};\]
        (Here and below we denote the natural pairing between $\mfh$ and $\mfh^*$ by $\ang{\cdot,\cdot}$.)
\item a lattice $P$ in $\mfh^*$ containing $\alpha_j \; (j\in I)$ such that $\ang{\alpha^\vee_i, \la} \in \Z \; (i\in I, \la \in P)$.
\end{itemize}
We define the lattice $P^\vee:=\set{h\in \mfh | \ang{h,P} \subseteq \Z}$.
We have a sublattice $Q :=\bigoplus_{i\in I} \Z \alpha_i\subset P$ and
a lattice $Q^\vee :=\bigoplus_{i \in I} \Z \alpha^\vee_i\subset P^\vee$.
Let $Q^+=\bigoplus_I \Z_{\geq 0} \alpha_i$
and $Q^-=-Q^+$.
Denote $P^+ =\{\la \in P \mid \ang{\alpha_i^\vee,\la} \in \Z_{\geq 0}, \forall i \in I\}$.  Fix
 $\omega_j \in P^+$ for each $j\in I$ such that $\ang{\alpha_i^\vee,\omega_j} =\delta_{ij}$ for $i\in I$.

SGCM's of anisotropic type were first introduced in \cite{Kac} though the terminology ``anisotropic type" is new
(which means that there are no isotropic odd simple roots),
and Condition (d) above ensures that  all the odd simple roots are of type $\osp(1|2)$.

For $i\in I$ set \[q_i=q^{d_i}, \qquad \pi_i=\pi^{\parity{i}}, \qquad [n]_i=\frac{(\pi_iq_i)^n-q_i^{-n}}{\pi_iq_i-q_i^{-1}},\]
and so on.
Throughout this paper,
we will always assume that a SGCM is of anisotropic type satisfying the following additional {\em bar-consistent} assumption:
\begin{enumerate}
\item[(f)]  $\parity{i}=d_i$ mod $2$.
\end{enumerate}
This assumption immediately implies $\bar{[n]_i}=[n]_i$, and is indispensable in formulating the
bar involution and canonical basis which is the main goal of this paper.
We note, however, that this assumption (f) is unnecessary for the definition
of the quantum cover groups given in \cite{CHW} (see \S \ref{subset:covering} below) and the
$(q,\pi)$-boson algebra introduced in Section~\ref{sec:qpiboson}.

\subsection{Quantum covering group}
 \label{subset:covering}

Let $A$ be a SGCM. The quantum covering group associated to $A$ is defined
to be the associative $\Qqp$-superalgebra $\Uqp(A)$ (with $1$)
generated by
\[\ E_i,\quad F_i,\quad J_\mu,
\quad K_\mu \quad (i\in I, \mu\in P^\vee),   \]
subject to the relations \eqref{eq:relK}-\eqref{eq:relEF}  below:
\begin{align}  \label{eq:relK}
K_0=1, \quad K_\mu K_\nu=K_{\mu+\nu},   & \quad \text{ for }\mu,\nu\in P^\vee, i\in I;
 \\
\label{eq:relJ}
 J_{2\mu}=1, \quad J_\mu J_\nu=J_{\mu+\nu},  & \quad \text{ for }\mu,\nu\in P^\vee, i\in I;
 \\
J_\mu K_\nu=K_\nu J_\mu,  & \quad \text{ for }\mu,\nu\in P^\vee;
 \\
K_\mu E_i=q^{\ang{\mu,\alpha_i}}E_iK_{\mu},\quad
 J_\mu E_i =\pi^{\ang{\mu,\alpha_i}}E_iJ_{\mu},   & \quad  \text{ for }i\in I, \mu\in P^\vee;
\label{eq:relKE+JE}
 \\
\label{eq:relKF+JF}
K_\mu F_j=q^{-\ang{\mu,\alpha_j}}F_jK_{\mu},
\quad J_\mu F_i =\pi^{-\ang{\mu,\alpha_i}}F_iJ_{\mu},  & \quad \text{ for }i,j\in I, \mu\in P^\vee;
\\
\sum_{s+t=1-a_{ij}}(-1)^{s}\pi_i^{sp(j)+\binom{s}{2}}
\bbinom{1-a_{ij}}{s}_iE_i^{t}E_jE_i^{s}=0,   & \quad \text{ for }i\neq j\in I;
 \\
\sum_{s+t=1-a_{ij}}(-1)^{s}\pi_i^{sp(j)+\binom{s}{2}}
\bbinom{1-a_{ij}}{s}_iF_i^{t}F_jF_i^{s}=0,  & \quad \text{ for }i\neq j\in I;
 \\
\label{eq:relEF}
 E_iF_j-\pi^{\parity i \parity j} F_jE_i=
\delta_{ij}\frac{\tJ_i\widetilde{K}_i-\widetilde{K}_{-i}}{\pi_iq_i-q_i^{-1}},
&  \quad \text{ for }i,j\in I.
\end{align}

In the above, we use the notations $K_i:=K_{\alpha^\vee_i}$ and 
$J_i:=J_{\alpha^\vee_i}$ for $i\in I$, and
$\widetilde{K}_\nu=\prod_i K_{i}^{d_i\nu_i}$
and $\tJ_\nu = \prod_i J_{i}^{d_i\nu_i}$ for any
$\nu=\sum_i \nu_i \alpha^\vee_i\in Q^\vee$.
In particular, 
$$
\widetilde{K}_i=K_{i}^{d_i}, \qquad \tJ_i=J_{i}^{d_i}. 
$$
The superalgebra structure on $\Uqp$ is given by
letting $E_i, F_i$ for $i\in \od I$ be odd and the other generators be even.
In particular, $\Uqp(A)$ inherits a $\Z_2$-grading from $I$:
\begin{equation}\parity{E_i}=\parity{F_i}=\parity{i},\quad
\parity{K_\mu}=\parity{J_\mu}=0, \qquad (i\in I, \mu \in P^\vee).
\end{equation}


\begin{rmk}\label{rmk:compareCW}
Let $I=\set{i}=I_1$ and $Q=P^\vee =\Z \alpha^\vee_i$, and so $A$ is the SGCM associated to the Lie superalgebra
$\osp(1|2)$. Then $\Uqp(A)$ specialized at $\pi=-1$ is a variant of the quantum $\osp(1|2)$ $U_0\oplus U_1$
defined in \cite{CW}. That is,
$\Uqp(A)/\ang{\pi+1, J_i-1} \cong U_0$ and $\Uqp/ \ang{\pi+1, J_i+1} \cong U_1$.
\end{rmk}

Henceforth, we fix a   SGCM $A$ and use the shorthand notation $\UU =\Uqp(A).$ 
The algebra $\UU$ has a triangular decomposition
$\UU\cong \UU^-\otimes \UU^0 \otimes \UU^+$ where $\UU^-$ (resp. $\UU^+$, $\UU^0$)
is the subalgebra generated by $\set{F_i \mid i\in I}$ (resp.
$\set{E_i\mid i\in I}$, $\set{K_\mu,J_\mu \mid  \mu\in P^\vee}$); see \cite{CHW}.
For $i\in I$, and $n\geq 1$ define the divided powers
$E_i^{(n)}=E_i^n/[n]_i!,  F_i^{(n)}=F_i^n/[n]_i!$.
The following fact will be used often later on.

\begin{lem}
For each $k\in I$, $\tJ_k$ is central in $\UU$.
\end{lem}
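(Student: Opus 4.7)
The plan is to verify that $\tJ_k$ commutes with each of the generators $J_\mu$, $K_\mu$, $E_i$, $F_i$ of $\UU$. Since $\tJ_k = J_k^{d_k} = J_{d_k \alpha_k^\vee}$, commutation with all $J_\mu$ is immediate from \eqref{eq:relJ}, which makes the family $\{J_\mu\}$ mutually commuting, and commutation with all $K_\mu$ follows from the relation $J_\mu K_\nu = K_\nu J_\mu$. So the real content is to show $\tJ_k E_i = E_i \tJ_k$ and $\tJ_k F_i = F_i \tJ_k$ for every $i \in I$.

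Using \eqref{eq:relKE+JE} with $\mu = d_k \alpha_k^\vee$, one computes
\[
\tJ_k E_i = \pi^{d_k \langle \alpha_k^\vee, \alpha_i\rangle} E_i \tJ_k = \pi^{d_k a_{ki}} E_i \tJ_k,
\]
and similarly \eqref{eq:relKF+JF} yields $\tJ_k F_i = \pi^{-d_k a_{ki}} F_i \tJ_k$. Since $\pi^2 = 1$, both identities reduce to the single arithmetic claim that $d_k a_{ki}$ is even for every $i \in I$.

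To establish this I would split on the parity of $k$. If $k \in \ev{I}$, then the bar-consistent assumption (f) forces $d_k \equiv \parity{k} = 0 \pmod 2$, so $d_k$ itself is even and the product $d_k a_{ki}$ is even regardless of $a_{ki}$. If instead $k \in \od{I}$, then condition (d) on the SGCM guarantees $a_{ki} \in 2\Z$ for every $i \in I$, and so again $d_k a_{ki}$ is even. In either case $\pi^{\pm d_k a_{ki}} = 1$, yielding the desired centrality.

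The argument is essentially bookkeeping once one pinpoints which SGCM axioms carry the load; the only conceptual point is that the combination of (d) (all rows of $A$ indexed by odd nodes are even-valued) with the bar-consistent condition (f) (odd-indexed $d_k$ match the parities) is precisely what forces the exponent $d_k a_{ki}$ to be even. Either hypothesis alone would be insufficient, so the ``hard part'' is merely recognizing this interplay — there is no serious technical obstacle to anticipate.
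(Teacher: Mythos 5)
Your argument is correct and is essentially the paper's proof: reduce to commutation with $E_i$ and $F_i$, which by \eqref{eq:relKE+JE} and \eqref{eq:relKF+JF} amounts to $\pi^{d_k a_{ki}}=1$, and this follows from Conditions (d) and (f). Your parity case split on $k$ simply spells out the detail the paper leaves implicit.
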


\begin{proof}
It suffices to check that $\tJ_k$ commutes with every $E_i$ and $F_i$. By \eqref{eq:relKE+JE} and \eqref{eq:relKF+JF}
and using $\tJ_i=J_{i}^{d_i}$,
this is equivalent to checking that $\pi^{d_ka_{ki}}=1$ for all $k,i \in I$,
 which follows from Conditions~(d) and (f) in \S \ref{subsec:SGCM}.
\end{proof}


%

\subsection{Coproducts}

Let $\UU\otimes \UU$ be the space $\UU\otimes_\Qqp \UU$ with the usual
$\Z_2$-grading. We endow $\UU\otimes \UU$ with a multiplication given
by
\begin{equation}\label{eqn:STM}
(a\otimes b)(c\otimes d)=\pi^{p(b)p(c)}(ac\otimes bd)
\end{equation}
for homogeneous $a,b,c,d\in \UU$. This makes $\UU\otimes \UU$
into an associative $\Qqp$-superalgebra with $1$.

We have a superalgebra homomorphism $\Delta:\UU\longrightarrow \UU\otimes \UU$ given by
\begin{align*}
\Delta(E_i) &=E_i\otimes \tK_i^{-1}+\tJ_{i}\otimes E_i,\\
\Delta(F_i) &=F_i\otimes 1 + \tK_i\otimes F_i,\\
\Delta(K_\mu)& =K_\mu\otimes K_\mu,\\
\Delta(J_{\mu}) &=J_{\mu}\otimes J_{\mu},
\end{align*}
where $i\in I, \mu \in P^\vee$.
This homomorphism is in fact a coproduct which makes $\UU$ a Hopf superalgebra in a suitable
sense (a different coproduct was used in \cite{CHW} which is compatible with \cite{Lu2} rather than with \cite{Ka}). We have
\begin{equation}
\Delta(E_i^{(p)})=\sum_{p'+p''=p}
q_i^{p'p''} \tJ_i^{(p'')}E_i^{(p')}\otimes \tK_i^{-p'} E_i^{(p'')},
\end{equation}
\begin{equation}\label{eqn:deltadpf}
\Delta(F_i^{(p)})=\sum_{p'+p''=p}
(\pi_i q_i)^{-p'p''} F_i^{(p')}\tK_i^{p''}\otimes F_i^{(p'')}.
\end{equation}
For $\UU$-modules $M$ and $N$, we endow the tensor product
$M\otimes N$ with a $\UU$-module structure induced by $\Delta$.

There is another coproduct
\begin{equation}  \label{eq:comult'}
\Delta':\UU\longrightarrow \UU\otimes \UU
\end{equation}
given by
\begin{align*}
\Delta'(E_i) &=E_i\otimes \tK_i^{-1}+1\otimes E_i,\\
\Delta'(F_i) &=F_i\otimes 1 + \tJ_i\tK_i\otimes F_i,\\
\Delta'(K_\mu) &=K_\mu\otimes K_\mu,\\
\Delta'(J_{\mu}) &=J_{\mu}\otimes J_{\mu}.
\end{align*}
This induces a second $\UU$-module structure on the tensor product of $M$ and $N$, and we denote
this $\UU$-module by $M\otimes' N$.

\begin{rmk}\label{rem:coproducts}
In case $I=I_0$,  we have $\Delta=\Delta'$, and this coproduct coincides with $\Delta_-$
in \cite[(1.4.3)]{Ka} used in the construction
of crystal bases therein.
In the general super case, there is no compelling reason to choose one of these coproducts
over the other. In fact both $\Delta$ and $\Delta'$ will be useful, as we shall see in Lemma \ref{lem:pseudopol}.
\end{rmk}

\subsection{Module categories}
\label{subsec:cat}

The {\em specialization at $\pi=1$} (respectively, at {\em
$\pi=-1$}) of a $\Qq^\pi$-algebra $R$ is understood as
$\Qq\otimes_{\Qq^\pi}R$, where $\Qq$ is the $\Qq^\pi$-module with
$\pi$ acting as $1$ (respectively, as $-1$).
The specialization at $\pi=1$ of
the algebra $\UU$, denoted by $\UU_+$, is a variant of this quantum group, with some
extra central elements $J_\mu$ that act trivially on all representations (see below).
The specialization at $\pi=-1$ of $\UU$, denoted by $\UU_-$,  is a {\em quantum supergroup}, and the extra
generators $J_\mu$ allow us to formulate integrable modules $V(\la)$
for {\em all} $\la \in P^+$, which was not possible before; see \cite{CHW}.

 In the remainder of this paper, by a representation of the algebra $\UU$
 we mean a  $\Qq^\pi$-module on which $\UU$ acts.
 Note we have a direct sum decomposition of the $\Qq^\pi$-module
 $\Qq^\pi \cong \Qqp/ \ang{\pi-1} \oplus   \Qqp/\ang{\pi+1}$, where $\pi$ acts as $\pm 1$
on $\Qqp/\ang{\pi\mp 1} \cong \Qq$.

We define the category $\curlyC$ (of weight $\UU$-modules) as follows. An object of $\curlyC$
is a $\Z_2$-graded $\UU$-module $M=M_\zero \oplus M_\one$,
compatible with the $\Z_2$-grading on $\UU$, with a given weight
space decomposition
\[
M=\bigoplus_{\lambda\in X} M^\lambda,
 \qquad M^\lambda=\set{m\in M \mid
K_\mu m=q^{\ang{\mu, \lambda}} m, J_\mu m=\pi^{\ang{\mu, \lambda}}
m, \forall \mu \in P^\vee},
\]
such that $M^\lambda=M_\zero^\lambda\oplus M_\one^\lambda$ where
$M_\zero^\lambda=M^\lambda\cap M_\zero$ and
$M_\one^\lambda=M^\lambda\cap M_\one$. The $\Z_2$-graded structure
is only particularly relevant to tensor products, and will generally
be suppressed when irrelevant.
%
We have the following $\Qq^\pi$-module decomposition for each weight space:
$M^\la \cong  M^\la/\ang{\pi-1} \oplus   M^\la/\ang{\pi+1}$; accordingly, we have
$M\cong M_+ \oplus M_-$ as $\UU$-modules, where
$M_\pm :=\oplus_{\la\in X}  M^\la/\ang{\pi \mp 1}$ is an $\UU$-module on which $\pi$ acts as $\pm 1$,
i.e. a $\UU|_{\pi=\pm 1}$-module.
Hence the category $\curlyC$ decomposes into a direct sum
$\curlyC = \curlyC_+ \oplus \curlyC_-$, where $\curlyC_\pm$ can be identified with categories
of weight modules over the specializations $\UU_\pm$.

The BGG category $\catO$ and the category $\catO^{\rm int}$ of integrable modules can be defined as usual (cf. \cite[\S2.5-2.6]{CHW}).
The decomposition $\curlyC = \curlyC_+ \oplus \curlyC_-$ gives rise
to a similar decomposition $\catO^{\rm int} = \catO^{\rm int}_+ \oplus \catO^{\rm int}_-$ thanks to $\catO^{\rm int} \subset\catO \subset \curlyC$.

Let  $I(\lambda)$ be the left
ideal of $\UU$ generated by $\{E_i, K_\mu-q^{\ang{\mu,\lambda}},
J_\mu-\pi_i^{\ang{\mu,\lambda}}\mid \mu\in P^\vee \}$, for each $\lambda\in P$, and define
the Verma module
associated to $\lambda$ to be $M(\lambda):=\UU/I(\lambda)$. Then we have
$M(\la) =M(\la)_+ \oplus M(\la)_-$ and each $M(\la)_\pm$ has a unique
quotient $\UU$-module $V(\la)_\pm$.
Note that $V(\lambda)_\pm^\lambda=\Qq v^+_\lambda$.
We emphasize that in our definition, $\parity{v^+_\lambda}=0$.
The following was proved in \cite{CHW}.

\begin{prop}  \label{prop:integrable}
\cite[Theorem 3.3.2, Corollary 3.3.3]{CHW}
The category $\catOint$ is semisimple; $\set{V(\lambda)_{\pm} \mid \lambda\in P^+}$
            forms a set of pairwise non-isomorphic simple modules in $\catOint$.
\end{prop}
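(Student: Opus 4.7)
The plan is to exploit the splitting $\catOint = \catOint_+ \oplus \catOint_-$ described just before the statement, so that it suffices to prove each half of the assertion separately for weight modules over the two specializations $\UU_+$ and $\UU_-$. On the $\pi=+1$ side the algebra $\UU_+$ is Lusztig's quantum Kac--Moody algebra decorated with the central $J_\mu$'s (which act on each weight space by a sign determined by $\la$), so that the statement reduces to the classical Lusztig theorem for integrable modules and requires no new argument. The substantive work is therefore for $\UU_-$.

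For a fixed $\la\in P^+$, I would first prove simplicity of $V(\la)_-$ via a Shapovalov-type contravariant form on the Verma module $M(\la)_-$. Using the anti-involution of $\UU$ that swaps $E_i\leftrightarrow F_i$ (with the appropriate $\pi$-twist forced by the super-twisted multiplication \eqref{eqn:STM}), one defines a bilinear form $(\cdot,\cdot)_\la$ whose weight spaces are pairwise orthogonal and whose radical is the unique maximal proper submodule. Integrability of $V(\la)_-$, checked node by node via rank-one reduction to the odd generator pair (which for $i\in \od I$ is a copy of quantum $\osp(1|2)$ whose finite-dimensional simple modules $V(m)_-$ are classified in \cite{CW}), forces this form to be nondegenerate on $V(\la)_-$, hence $V(\la)_-$ is simple. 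Distinctness of the highest weights immediately gives pairwise non-isomorphism of $\{V(\la)_-\mid\la\in P^+\}$.

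For semisimplicity I would proceed by the usual Casimir argument: construct a quantum Casimir element $\Omega$ in a completion $\widehat\UU$ so that $\Omega$ acts well on objects of $\catOint$, show by direct computation on a highest weight vector that $\Omega$ acts on $V(\la)_-$ by a scalar $c_\la$, and verify that $\la\mapsto c_\la$ separates points of $P^+$. Then an arbitrary $M\in\catOint_-$ decomposes into generalized $\Omega$-eigenspaces, and on each block one extracts a maximal weight vector, identifies the submodule it generates with some $V(\la)_-$, and splits off this summand using nondegeneracy of the induced contravariant form on the subquotient; induction on the finite weight poset of $M$ (using the fact that weights of $M$ lie in a finite union $D(\la_i):=\la_i-Q^+$) finishes the job.

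The main obstacle is getting the $\pi$-twisted Shapovalov machinery right: one must choose sign and parity conventions so that the form is genuinely contravariant under the super/twisted multiplication, and, more importantly, so that the extra generators $J_\mu$ built into the definition of $\UU$ (and of $M(\la)_-$) allow the form to remain nondegenerate for \emph{every} $\la\in P^+$ rather than only the ``even'' dominant weights treated in earlier work \cite{Kac, BKM, Jeo}. Once this is in place, the Casimir computation and the eigenspace-splitting step are routine adaptations of the non-super arguments.
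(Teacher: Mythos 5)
This proposition is not proved in the paper at all: it is quoted verbatim from the companion paper, \cite[Theorem 3.3.2, Corollary 3.3.3]{CHW}, so there is no internal proof to compare against. Your outline --- splitting $\catOint=\catOint_+\oplus\catOint_-$, handling $\pi=+1$ by Lusztig's classical theorem, and for $\pi=-1$ using a $\pi$-twisted contravariant (Shapovalov-type) form for simplicity and pairwise non-isomorphism together with a quantum Casimir element and induction over the weight poset for complete reducibility, with the $J_\mu$'s being what makes every $\la\in P^+$ (not just ``even'' weights) admissible --- is essentially the same strategy carried out in \cite{CHW}, which adapts Lusztig's Casimir/highest-weight argument to the covering setting. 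Be aware, though, that as written it is a plan rather than a proof: the construction of the $\pi$-twisted Casimir, the separation of its eigenvalues on dominant weights, and the rank-one integrability check for odd $i$ with $\ang{\alpha_i^\vee,\la}$ odd are precisely the technical content supplied by the cited reference.
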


For $M\in \mathcal \curlyC$ and $m\in M_\mu$, we write $|m|=\mu$
and call this the weight grading.
In particular, the elements of $\UU$ have a weight grading given by
$|K_\mu|=|J_\mu|=0, |E_i|=\alpha_i,  |F_i|=-\alpha_i \;\; (i\in I, \mu\in P^\vee).$


\begin{rmk}  \label{rmk:char=}
The characters of $V(\la)_+$ and $V(\la)_-$ coincide for all $\la \in P^+$.
(This was stated in \cite{CHW}, and a proof is given in \cite{KKO} and \cite{CFLW}.)
Thus, $V(\la) :=M(\la)/\big \langle F_i^{\ang{\alpha_i^\vee, \la}+1}v_\la^+\mid i\in I \big\rangle$
is free as a $\Qqp$-module, and  $V(\la) \cong V(\la)_+ \oplus V(\la)_-$.
\end{rmk}


\section{The $(q,\pi)$-boson superalgebra}
\label{sec:qpiboson}

In this section, we formulate a $(q,\pi)$-version of the $q$-Boson algebra as found in Kashiwara \cite[\S 3]{Ka}.
Note that when $\pi$ is specialized to $1$, this is just
the $q$-Boson algebra therein.

\subsection{The algebra $\bF$}

Let $\bF =\bF_{q,\pi}$ be the $\Qqp$-superalgebra generated by odd elements $e, f$
subject to the relation
\[
ef=\pi q^{-2} fe +1.
\]
We set $f^{(n)}=f^n/[n]!$.

One checks that
\begin{equation}
    e^nf^{(m)}=\sum_{t\ge 0}  (\pi q)^{\binom{t+1}{2}-nm}q^{-(n-t)(m-t)}
    \bbinom{n}{t} f^{(m-t)} e^{n-t}.
\end{equation}

The following properties may be directly verified.

\begin{lem}\label{lem:Fprops}
Let $M$ be a $\bF$-module which is locally finite for $e$.
\begin{enumerate}
\item[(i)]
$P=\sum_{n\geq 0} (-1)^n q^{-\binom{n}{2}}f^{(n)}e^n$
defines an endomorphism of $M$ satisfying
\begin{equation}\label{eq:Fprojection}
    eP=Pf=0\text{ and }
    \displaystyle\sum_{t\geq 0} (\pi q)^{\binom{t}{2}}f^{(t)} P e^t=1.
\end{equation}

\item[(ii)]
Let $m\in M$. Then any $u\in M$ has a unique decomposition
$u=\sum_{n \ge 0}  f^{(n)} u_n$ where $u_n\in \ker e$; in fact,
$u_n=(\pi q)^{\binom{n}{2}}P e^n u$.

\item[(iii)]
$M=\im f\oplus \ker e$. Moreover $P:M\rightarrow M$ is the projection map
onto $\ker e$ along this direct sum decomposition.
\end{enumerate}
\end{lem}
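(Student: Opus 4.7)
The plan is to model the argument on Kashiwara's proof of the analogous statement for the $q$-boson algebra, carefully tracking the factors of $\pi$ introduced by the super sign convention. The three statements cascade: (i) is the technical heart, (ii) is an easy consequence of the two identities in (i), and (iii) is an immediate reformulation of (ii).

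\textbf{Step 1: $P$ is well defined.} Since $e$ is locally finite on $M$, for each $u\in M$ there is some $N$ with $e^N u=0$, so the sum $Pu=\sum_{n\geq 0}(-1)^n q^{-\binom{n}{2}} f^{(n)} e^n u$ truncates at $n<N$. Hence $P$ is a $\Qqp$-linear endomorphism of $M$.

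\textbf{Step 2: $eP=0$ and $Pf=0$.} Specializing the commutation formula displayed just before the lemma to $n=1,\ m$ arbitrary gives
\[
e f^{(m)} = \pi^{m}q^{-2m}\, f^{(m)} e + \pi^{m-1}q^{1-m}\, f^{(m-1)},
\]
(the $t=0$ and $t=1$ terms). Applying this inside $eP$, the pieces containing $f^{(n)}e^{n+1}$ and the pieces containing $f^{(n-1)}e^{n}$ telescope against each other after shifting the index and using the identity $q^{-\binom{n}{2}}\pi^{n-1}q^{1-n} = -(-1)\cdot q^{-\binom{n-1}{2}}\pi^{n-1}$, so that consecutive terms cancel. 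The identity $Pf=0$ is proved by the dual computation: expand $f^{(n)}e^n\cdot f$ using the mirror form of the same commutation relation and obtain a telescoping cancellation.

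\textbf{Step 3: the completeness identity.} Substitute the definition of $P$ into $\sum_{t\ge 0}(\pi q)^{\binom{t}{2}} f^{(t)} P e^t$ to get the double sum
\[
\sum_{t,n\ge 0}(-1)^n (\pi q)^{\binom{t}{2}} q^{-\binom{n}{2}}\, f^{(t)} f^{(n)} e^{n+t}
= \sum_{N\ge 0}\Bigl(\sum_{n+t=N}(-1)^n(\pi q)^{\binom{t}{2}} q^{-\binom{n}{2}} \bbinom{N}{n}\Bigr) f^{(N)} e^{N},
\]
using $f^{(t)}f^{(n)}=\bbinom{n+t}{t}f^{(n+t)}$. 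The inner sum is a $(q,\pi)$-analogue of the standard identity $\sum_{n+t=N}(-1)^n q^{\binom{t}{2}}q^{-\binom{n}{2}}\bbinom{N}{n}=\delta_{N,0}$, and it evaluates to $\delta_{N,0}$ here as well (this is the main combinatorial input, and it is the step I expect to be the principal obstacle: one must verify that the $\pi$-factors conspire so that the alternating sum still collapses; the bar-consistent assumption (f) is what guarantees the needed $\pi$-cancellations).

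\textbf{Step 4: parts (ii) and (iii).} Given $u\in M$, set $u_n=(\pi q)^{\binom{n}{2}} P e^n u$; by (i), $u_n\in \ker e$, and the completeness identity yields $u=\sum_{n\ge 0} f^{(n)} u_n$. For uniqueness, suppose $\sum_{n\ge 0} f^{(n)} v_n=0$ with $v_n\in \ker e$; apply $P e^k$ to both sides. Using $Pf=0$ to kill the terms with $n<k$ and $eP=0$ (after commuting $e^k$ past $f^{(n)}$ and using $e v_n=0$) to isolate the $n=k$ term, one obtains $v_k=0$ for every $k$. Part (iii) is then a direct translation of (ii): the summands with $n\ge 1$ lie in $\im f$, the summand $u_0$ lies in $\ker e$, and $P$ is precisely the projection onto the second summand since $Pf=0$ forces $P|_{\im f}=0$ while $Pu_0=u_0$ follows from evaluating the completeness identity on $u_0\in \ker e$.
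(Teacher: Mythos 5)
Your proof is correct and is essentially the paper's own argument: the paper offers no proof beyond the remark that these properties ``may be directly verified,'' and your direct verification modeled on Kashiwara's $q$-boson computation is exactly what is intended. One conceptual correction: the collapse of the alternating sum in your Step 3 has nothing to do with the bar-consistency assumption (f) --- the paper explicitly notes that (f) is unnecessary for the $(q,\pi)$-boson algebra --- rather, the inner sum $\sum_{n+t=N}(-1)^n(\pi q)^{\binom{t}{2}}q^{-\binom{n}{2}}\bbinom{N}{n}=\delta_{N,0}$ is a purely formal $(q,\pi)$-binomial identity, obtained from the classical Gaussian one by the substitution $x=\pi q^{-2}$ via \eqref{eqn:pqinttogauss} (the same trick the paper uses later for the divided-power identity), or compare the identity quoted in the proof of Lemma \ref{particleserre}. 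Two harmless bookkeeping slips: the displayed cancellation identity in your Step 2 is missing a factor $q^{-2(n-1)}$ on the right-hand side (the telescoping itself is fine, since $-\binom{n}{2}+1-n=-\binom{n-1}{2}-2(n-1)$), and in the uniqueness argument of Step 4 the roles of the two identities are swapped: the terms with $n<k$ die because $e^{k-t}v_n=0$ after commuting $e^k$ past $f^{(n)}$, while $Pf=0$ kills the terms with $n>k$.
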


\subsection{The algebra $\bB$}

Define the algebra $\bB'$ to be the $\Qqp$-algebra
generated by the elements $\set{e_i,f_i | i\in I}$
subject to the relations

\begin{equation}\label{eq:bP'defreln}
e_if_j=\pi^{p(i)p(j)}q_i^{-a_{ij}}f_je_i+\delta_{ij}, \quad \text{ for all } i,j\in I.
\end{equation}
Then $\bB'$ is naturally a superalgebra  with parity on generators given by
$\parity{e_i}=\parity{f_i}=\parity{i}$ for $i\in I$.
Set  $f_i^{(n)}=f_i^n/[n]_i!$.
The superalgebra $\bB$ by definition has the same generators as $\bB'$
subject to the relation \eqref{eq:bP'defreln} and the additional $(q,\pi)$-Serre relations \eqref{eq:Serre}:
\begin{align}  \label{eq:Serre}
 \begin{split}
\sum_{t=0}^{b_{ij}} (-1)^t \pi_i^{\binom{t}{2}+tp(j)}
\bbinom{b_{ij}}{t}_i e_i^{b_{ij}-t}e_je_i^{t} &=0,
\\
\sum_{t=0}^{b_{ij}} (-1)^t \pi_i^{\binom{t}{2}+tp(j)}
\bbinom{b_{ij}}{t}_i f_i^{b_{ij}-t}f_jf_i^{t} &=0,
\end{split}
\end{align}
where  we denote
$$b_{ij}=1-a_{ij}.
$$
Let
\begin{equation}  \label{eq:Sij}
S_{ij}=\sum_{t=0}^{b_{ij}} (-1)^t \pi_i^{\binom{t}{2}+tp(j)}
\bbinom{b_{ij}}{t} e_i^{b_{ij}-t}e_je_i^{t} \in \bB'.
\end{equation}

\begin{lem}\label{particleserre}
The following holds in $\bB'$ for all $i,j,k\in I$ with $i\neq j$:
\[
S_{ij}f_k=\pi_k^{b_{ij}p(i)+p(j)}q_k^{-\ang{\alpha_k^\vee, b_{ij}\alpha_i+\alpha_j}}f_kS_{ij}.
\]
\end{lem}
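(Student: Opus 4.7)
My strategy is a direct commutation: slide $f_k$ to the left past each factor of the monomials appearing in $S_{ij}$, using \eqref{eq:bP'defreln}. Exploiting the symmetry $d_l a_{lk}=d_k a_{kl}$, that relation rewrites as
\[
e_l f_k = \pi^{p(l)p(k)} q_k^{-\langle\alpha_k^\vee,\alpha_l\rangle} f_k e_l + \delta_{lk},
\]
so commuting $f_k$ past any monomial $M$ in the $e$'s yields a principal summand (a scalar multiple of $f_k M$) plus correction summands that appear only when a $\delta_{lk}$ fires. Since all monomials $e_i^{b_{ij}-t}e_je_i^t$ share weight $b_{ij}\alpha_i+\alpha_j$ and parity $b_{ij}p(i)+p(j)$, their principal scalars all coincide with the asserted factor $\pi_k^{b_{ij}p(i)+p(j)} q_k^{-\langle\alpha_k^\vee,b_{ij}\alpha_i+\alpha_j\rangle}$. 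The lemma thus reduces to showing that the corrections cancel in the alternating sum.

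When $k\notin\{i,j\}$ no correction ever arises and the identity is immediate. For $k=j$, each summand produces exactly one correction (from its single $e_j$), proportional to $e_i^{b_{ij}-t}\cdot e_i^t=e_i^{b_{ij}}$. After collapsing $\pi$-signs via $\pi^2=1$, the required cancellation reduces to the $(q,\pi)$-binomial identity
\[
\sum_{t=0}^{b_{ij}}(-1)^t \pi_i^{\binom{t}{2}} q_i^{t(b_{ij}-1)} \bbinom{b_{ij}}{t}_i=0,
\]
which I would prove by induction on $b_{ij}$ using the $(q,\pi)$-Pascal rule; it is a covering-algebra analog of a standard $q$-binomial identity.

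For $k=i$, I would first derive (by induction from \eqref{eq:bP'defreln}, or from Lemma \ref{lem:Fprops}(ii) applied to the $\bF$-subalgebra generated by $e_i,f_i$) the iterated commutation formula
\[
e_i^r f_i = \pi_i^r q_i^{-2r} f_i e_i^r + (\pi_i q_i)^{1-r}[r]_i e_i^{r-1},
\]
and use it to commute $f_i$ past both $e_i^t$ and $e_i^{b_{ij}-t}$ (while $e_j$ commutes with $f_i$ up to a scalar). This yields two correction streams, proportional respectively to $e_i^{b_{ij}-t}e_je_i^{t-1}$ and $e_i^{b_{ij}-t-1}e_je_i^t$. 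Reindexing $s=t-1$ in the former, the coefficient of $e_i^{b_{ij}-s-1}e_je_i^s$ for each fixed $s\in\{0,\dots,b_{ij}-1\}$ becomes a two-term expression; substituting the defining coefficients of $S_{ij}$ and using $\bbinom{b_{ij}}{s+1}_i[s+1]_i = \bbinom{b_{ij}}{s}_i [b_{ij}-s]_i$, it collapses term-by-term to the single scalar condition $\pi_i^{1-b_{ij}}=1$. This is trivial when $p(i)=0$, and when $p(i)=1$ it follows from SGCM axiom (d), which forces $a_{ij}\in 2\Z$ and hence $b_{ij}=1-a_{ij}$ to be odd.

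The main obstacle is the $\pi$-sign bookkeeping in the $k=i$ case: the exponents from $\pi_i^{\binom{t}{2}+tp(j)}$ in the $S_{ij}$-coefficients must be tracked precisely against those from $(\pi_i q_i)^{1-r}$ in the iterated commutation formula. Once the bookkeeping is organized, the cancellation is remarkably clean---no nontrivial binomial identity is needed for $k=i$, only the term-by-term equality $\pi_i^{1-b_{ij}}=1$, which is exactly the content of SGCM axiom (d) guaranteeing that odd simple roots are of $\osp(1|2)$ type.
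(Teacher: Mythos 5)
Your proposal is correct and follows essentially the same route as the paper: commute $f_k$ through $S_{ij}$, treat the cases $k\notin\{i,j\}$, $k=j$, $k=i$ separately, reduce the $k=j$ case to the alternating identity $\sum_{t}(-1)^t\pi_i^{\binom{t}{2}}q_i^{t(b_{ij}-1)}\bbinom{b_{ij}}{t}_i=0$, and reduce the $k=i$ case via $\bbinom{b_{ij}}{t}_i[b_{ij}-t]_i=\bbinom{b_{ij}}{t+1}_i[t+1]_i$ to the condition $\pi_i^{1-b_{ij}}=\pi_i^{a_{ij}}=1$, which is exactly Condition (d) as in the paper. The only differences are presentational: you make the iterated commutation formula $e_i^rf_i=\pi_i^rq_i^{-2r}f_ie_i^r+(\pi_iq_i)^{1-r}[r]_ie_i^{r-1}$ explicit and propose a Pascal-rule induction for the binomial identity that the paper simply quotes.
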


\begin{proof}
Let $C_{ij}^k=S_{ij}f_k-
\pi_k^{b_{ij}p(i)+p(j)}q_k^{-\ang{\alpha_k^\vee, b_{ij}\alpha_i+\alpha_j}}f_kS_{ij}.
$

If $k\neq i,j$ then then it is apparent that $C_{ij}^k=0$ from the defining
relations.

When $k=j$, then we have
\begin{equation*}
C_{ij}^j
=e_i^{b_{ij}}\sum_{t=0}^{b_{ij}} (-1)^t q_i^{-ta_{ij}}
\pi_i^{\binom{t}{2}}\bbinom{b_{ij}}{t}_i=0,
\end{equation*}
by using $1-a_{ij}=b_{ij}$
and the identity $\sum_{t=0}^n (-1)^t q_i^{t(n-1)}
\pi_i^{\binom{t}{2}}\bbinom{n}{t}_i=0$.

Finally, if $k=i$ then we have

\begin{equation*}
C_{ij}^i
=\sum_{t=0}^{b_{ij}-1} (-1)^t q_i^{-t} e_i^{b_{ij}-t-1}e_je_i^{t}
\pi_i^{\binom{t}{2}+(t+1)p(j)}\parens{\pi_i^{a_{ij}}\bbinom{b_{ij}}{t}_i[b_{ij}-t]_i- \bbinom{b_{ij}}{t+1}_i[t+1]_i}=0,
\end{equation*}
by noting $\bbinom{n}{t}[n-t]=\bbinom{n}{t+1}[t+1]$ and $a_{ij}p(i)\in 2\Z$; see Condition~(d) in \S \ref{subset:covering}.
The lemma is proved.
\end{proof}

\begin{rmk}
A multi-parameter version of the quantum boson algebra can also be found in \cite{KKO}.
\end{rmk}

\subsection{$\bB$-modules}

For $\Z_2$-homogeneous elements $x,y \in \UU$, we write the supercommutator as
$[x,y]:=xy -\pi^{p(x)p(y)} yx$.
For $i\in I$, define $E_i'$ and $E_i''$ in $\End(\UU^-)$
by
\begin{equation}  \label{eq:E'}
[E_i,y]=\frac{\tJ_i\widetilde{K}_i E_i''(y)-\widetilde{K}_i^{-1} E_i'(y)}{\pi_i q_i-q_i^{-1}},   \qquad \text{ for } y\in \UU^-.
\end{equation}
The existence and uniqueness of such linear operators $E_i'$ and $E_i''$ is proved easily
(actually it can be read off from the proof Lemma~\ref{lem:E'Fcomms} below).

\begin{lem}\label{lem:E'Fcomms}
For $y\in \UU^-$, we have
\begin{align*}
E_i'(F_jy)&=\pi_i^{p(j)}q_i^{-\ang{\alpha_i^\vee, \alpha_j}}F_jE_i'(y)+\delta_{ij}y,\\
E_i''(F_jy)&=\pi_i^{p(j)} q_i^{\ang{\alpha_i^\vee, \alpha_j}}F_jE_i''(y)+\delta_{ij}y.
\end{align*}
\end{lem}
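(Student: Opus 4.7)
The plan is to compute $[E_i, F_j y]$ in two different ways and compare the results. On one hand, apply the super-Leibniz rule for the supercommutator:
\[
[E_i, F_j y] = [E_i, F_j]\, y + \pi^{p(i)p(j)} F_j [E_i, y] = \delta_{ij}\frac{\tJ_i\tK_i - \tK_i^{-1}}{\pi_i q_i - q_i^{-1}}\, y + \pi_i^{p(j)} F_j \cdot \frac{\tJ_i\tK_i E_i''(y) - \tK_i^{-1} E_i'(y)}{\pi_i q_i - q_i^{-1}},
\]
where I have used relation \eqref{eq:relEF} together with the identity $\tK_{-i} = \tK_i^{-1}$, and the defining equation \eqref{eq:E'} of the operators $E_i', E_i''$.

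Next I would push $F_j$ past the Cartan-type elements. Using \eqref{eq:relKF+JF} one has $F_j \tK_i = q_i^{a_{ij}} \tK_i F_j$ and $F_j \tK_i^{-1} = q_i^{-a_{ij}} \tK_i^{-1} F_j$, while the previous lemma shows that $\tJ_i$ is central, so $F_j \tJ_i \tK_i = q_i^{a_{ij}} \tJ_i\tK_i F_j$. Substituting, the right-hand side becomes
\[
\frac{\tJ_i\tK_i\bigl(\pi_i^{p(j)} q_i^{a_{ij}} F_j E_i''(y) + \delta_{ij} y\bigr) - \tK_i^{-1}\bigl(\pi_i^{p(j)} q_i^{-a_{ij}} F_j E_i'(y) + \delta_{ij} y\bigr)}{\pi_i q_i - q_i^{-1}}.
\]
On the other hand, applying the definition \eqref{eq:E'} directly to the element $F_j y \in \UU^-$ gives
\[
[E_i, F_j y] = \frac{\tJ_i \tK_i E_i''(F_j y) - \tK_i^{-1} E_i'(F_j y)}{\pi_i q_i - q_i^{-1}}.
\]

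To conclude, I would invoke the triangular decomposition $\UU \cong \UU^- \otimes \UU^0 \otimes \UU^+$ from \cite{CHW}: since $\tJ_i\tK_i$ and $\tK_i^{-1}$ are linearly independent in $\UU^0$, they remain linearly independent as left factors multiplying elements of $\UU^-$. Matching coefficients of $\tJ_i\tK_i$ and of $\tK_i^{-1}$ separately yields the two claimed identities, upon recalling $a_{ij} = \ang{\alpha_i^\vee,\alpha_j}$.

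The only delicate point is the bookkeeping of parity: the super-Leibniz rule contributes $\pi^{p(i)p(j)} = \pi_i^{p(j)}$, and one must check that commuting $F_j$ through $\tJ_i$ produces no further sign, which is exactly guaranteed by the combination of conditions (d) and (f) in \S\ref{subsec:SGCM} (the same input that made $\tJ_i$ central). Once this is verified, the rest is straightforward algebra.
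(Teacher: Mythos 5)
Your proposal is correct and follows essentially the same route as the paper: computing $[E_i,F_jy]$ once via the super-Leibniz rule with relation \eqref{eq:relEF} and once via the defining equation \eqref{eq:E'}, then commuting $F_j$ past $\tK_i^{\pm 1}$ (and the central $\tJ_i$) using \eqref{eq:relKF+JF} and matching the $\tJ_i\tK_i$- and $\tK_i^{-1}$-components. Your explicit appeal to the triangular decomposition to justify comparing coefficients, and your remark on conditions (d) and (f) controlling the sign from $\tJ_i$, only make explicit what the paper leaves implicit.
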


\begin{proof}
First we have
$$
[E_i,F_j y]=\pi_i^{p(j)} F_j [E_i,y]+ \delta_{ij} \frac{\tJ_i\tK_i- \tK_i^{-1}}{\pi_iq_i-q_i^{-1}} y.
$$
It follows by definition that
\begin{align*}
[E_i,F_j y] &=\frac{\tJ_i\widetilde{K}_i E_i''(F_j y)-\widetilde{K}_i^{-1} E_i'(F_j y)}
{\pi_i q_i-q_i^{-1}},
 \\
\pi_i^{p(j)} F_j [E_i,y] & + \delta_{ij} \frac{\tJ_i\tK_i- \tK_i^{-1}}{\pi_iq_i-q_i^{-1}} y
 \\
 &=  \frac{\pi_i^{p(j)} F_j\big(\tJ_i\widetilde{K}_i E_i''(y)-\widetilde{K}_i^{-1} E_i'(y)\big)}{\pi_i q_i-q_i^{-1}}
+ \delta_{ij} \frac{\tJ_i\tK_iy - \tK_i^{-1}y}{\pi_iq_i-q_i^{-1}}.
\end{align*}
Now the lemma follows by a comparison of the right-hand sides of the above two equations using the commutation relation \eqref{eq:relKF+JF}
and noting $\tJ_i$ is central.
\end{proof}

\begin{prop}  \label{prop:qcommute}
\label{Ka:3.4.5}
We have $E_i'E_j''=\pi^{p(i)p(j)}q_j^{\ang{\alpha_j^\vee, \alpha_i}}E_j''E_i'$, for $i,j\in I$.
\end{prop}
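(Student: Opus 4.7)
The plan is to prove the identity $E_i'E_j''(y) = \pi^{p(i)p(j)} q_j^{\langle\alpha_j^\vee,\alpha_i\rangle} E_j''E_i'(y)$ by induction on the weight of $y\in\UU^-$. Since $\UU^-$ is generated as a unital algebra by the $F_k$'s, and Lemma~\ref{lem:E'Fcomms} records precisely how each of $E_i'$ and $E_j''$ commutes past a left multiplication by $F_k$, such an induction is the natural framework: the base case is $y=1$, and the inductive step passes from $y'$ to $F_k y'$.

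For the base case, $[E_i,1]=0$ together with the triangular decomposition of $\UU$ forces $E_i'(1)=E_i''(1)=0$ via the defining identity \eqref{eq:E'}, so both sides of the asserted relation vanish on $1$. For the inductive step, write $y=F_k y'$ and expand both $E_i'E_j''(F_k y')$ and $E_j''E_i'(F_k y')$ by applying Lemma~\ref{lem:E'Fcomms} twice on each side. Each expansion decomposes into a bulk term proportional to $F_k E_i'E_j''(y')$ (respectively $F_k E_j''E_i'(y')$) together with two Kronecker-delta boundary terms coming from the $\delta_{ik}$ and $\delta_{jk}$ contributions. Invoking the inductive hypothesis on $y'$ matches the bulk terms once the $(\pi,q)$-prefactors on the two sides are compared.

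Verifying that the boundary terms match reduces to two small scalar identities. The boundary term proportional to $E_j''(y')$ survives only when $k=i$, and the required equality is $\pi_j^{p(i)}=\pi^{p(i)p(j)}$, which is tautological since $\pi_j=\pi^{p(j)}$. The boundary term proportional to $E_i'(y')$ survives only when $k=j$, and requires $\pi^{p(i)p(j)}\pi_i^{p(j)}\,q_j^{a_{ji}} q_i^{-a_{ij}}=1$; the $\pi$-part collapses to $\pi^{2p(i)p(j)}=1$, and the $q$-part becomes $q^{d_j a_{ji}-d_i a_{ij}}=1$ by the symmetry of $DA$ from condition~(e) in \S\ref{subsec:SGCM}.

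I do not foresee any serious obstacle here; the proof is essentially careful bookkeeping of $\pi$- and $q$-exponents, with symmetry of $DA$ as the only structural input actually used. Notably, neither the Serre relations nor the bar-consistency condition~(f) enter, consistent with the remark that $E_i',E_i''$ and the $(q,\pi)$-boson algebra make sense without (f).
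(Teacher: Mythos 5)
Your proposal is correct and follows essentially the same route as the paper: induction on the height of the weight, writing $y=F_k y'$, expanding both $E_i'E_j''$ and $E_j''E_i'$ via Lemma~\ref{lem:E'Fcomms}, matching the bulk terms by the inductive hypothesis and the two Kronecker-delta boundary terms using $\pi_j^{p(i)}=\pi^{p(i)p(j)}$ and $d_j\ang{\alpha_j^\vee,\alpha_i}=d_i\ang{\alpha_i^\vee,\alpha_j}$. The only cosmetic difference is that the paper takes heights $\le 1$ as the base case while you start from $y=1$ and let the general step cover height one, which works since $E_i'(1)=E_i''(1)=0$.
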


\begin{proof}
Let $\nu\in Q^-$ and recall that for $\mu=\sum_i a_i\alpha_i\in Q$
we define its height $\height \mu=\sum_i a_i$.
Let $y\in U^-_\nu$.

If $\height (-\nu) \leq 1$, then $E_i'E_j''(y)=0=E_j''E_i'(y)$.
Otherwise, we may assume $y=F_ky'$ for some $k\in I$
and $y'\in U^-$ with $\height y' < \height y$.
Then
\begin{align*}
E_i'E_j''(y)
&=E_i'(\pi_j^{p(k)}  q_j^{\ang{\alpha_j^\vee, \alpha_k}}F_kE_j''(y')+\delta_{jk}y')\\
&=f(i,j,k)F_kE_i'E_j''(y')+\pi_j^{p(k)} q_j^{\ang{\alpha_j^\vee, \alpha_k}}\delta_{ik}E_j''(y')+\delta_{jk}E_i'(y')\\
&=f(i,j,k)F_kE_i'E_j''(y')+\pi^{p(i)p(j)}q^{d_j\ang{\alpha_j^\vee, \alpha_i}}\delta_{ik}E_j''(y')+\delta_{jk}E_i'(y')
\end{align*}
and similarly
\begin{align*}
E_j''E_i'(y)&=f(i,j,k)F_kE_j''E_i'(y')+\pi^{p(i)p(j)}q^{-d_i\ang{\alpha_i^\vee, \alpha_j}}\delta_{jk}E_i'(y')+\delta_{ik}E_j''(y')
\end{align*}
where we have denoted $f(i,j,k)=\pi^{(\parity i+\parity j)\parity k} q_j^{\ang{\alpha_j^\vee, \alpha_k}}
q^{-d_i\ang{\alpha_i^\vee, \alpha_k}+d_j\ang{\alpha_j^\vee, \alpha_k}}$.

Note that $d_j\langle \alpha_j^\vee, \alpha_i\rangle =d_i\ang{\alpha_i^\vee, \alpha_j}$ and by induction $E_i'E_j''(y')
=\pi^{p(i)p(j)}q_j^{\ang{\alpha_j^\vee, \alpha_i}}E_j''E_i'(y')$. Therefore,
we have
$E_i'E_j''(y)=\pi^{p(i)p(j)}q_j^{\ang{\alpha_j^\vee, \alpha_i}}E_j''E_i'(y)$.
\end{proof}

From this we derive the following (see \cite[Lemma 1.3.15]{CHW} for an equivalent version,
and the equivalence can be read off from \eqref{eq:E=r} below).

\begin{cor}\label{cor:derivtozero}
For $x\in\UU^-$, if $E_i'(x)=0$ for all $i\in I$ then $x\in\Qqp$.
\end{cor}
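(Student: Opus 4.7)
The plan is to reduce to the case where $x\in\UU^-_\nu$ is weight-homogeneous and then induct on $-\height(\nu)$. Since $E_i'$ sends $\UU^-_\nu$ into $\UU^-_{\nu+\alpha_i}$, the hypothesis $E_i'(x)=0$ passes to each weight piece of $x$, so homogeneity is a harmless assumption. The base of the induction splits into two cases. If $\nu=0$, then $x\in\UU^-_0=\Qqp\cdot 1\subseteq\Qqp$. If $\nu=-\alpha_j$ for some $j$, then $\UU^-_\nu=\Qqp F_j$, and by Lemma~\ref{lem:E'Fcomms} we have $E_j'(F_j)=1$, so $E_j'(x)=0$ immediately forces $x=0$.

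For the inductive step, assume $-\height(\nu)\ge 2$ and that the statement is known at every strictly smaller negative height. Proposition~\ref{prop:qcommute} gives
\[
E_i'(E_j''(x))=\pi^{p(i)p(j)}q_j^{\ang{\alpha_j^\vee,\alpha_i}}E_j''(E_i'(x))=0
\]
for all $i,j$, so each $E_j''(x)$ again lies in $\bigcap_i\ker E_i'$, now at the weight $\nu+\alpha_j$ of strictly smaller negative height. By the induction hypothesis $E_j''(x)\in\Qqp$; since $\nu+\alpha_j\ne 0$ (the case $\nu=-\alpha_j$ is excluded by $-\height(\nu)\ge 2$), in fact $E_j''(x)=0$ for every $j$. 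Combining $E_j'(x)=E_j''(x)=0$ with the defining identity \eqref{eq:E'} yields $[E_j,x]=0$ in $\UU$ for every $j\in I$; equivalently, $x$ supercommutes with every element of the positive part $\UU^+$.

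The main obstacle is the final step: ruling out a nonzero homogeneous $x\in\UU^-_\nu$ with $\nu\ne 0$ that supercommutes with all $E_j$. A clean route is to act on the highest-weight vector $v_\lambda$ of a Verma module $M(\lambda)$ with $\lambda\in P$ chosen sufficiently dominant that $M(\lambda)$ admits no singular vectors outside of $\Qqp v_\lambda$: supercommutation then gives $E_j(xv_\lambda)=\pm x(E_jv_\lambda)=0$, so $xv_\lambda$ is a singular vector of weight $\lambda+\nu\ne\lambda$, hence zero, and the freeness of $M(\lambda)$ as a $\UU^-$-module forces $x=0$. Alternatively, once the nondegenerate Lusztig-Kashiwara polarization on $\UU^-$ is introduced in the remainder of Section~\ref{sec:qpiboson}, the adjointness $(F_iy,z)\propto(y,E_i'(z))$ converts the statement into a one-line argument: $E_i'(z)=0$ for all $i$ makes $z$ orthogonal to $\sum_i F_i\UU^-$, whence $z\in\UU^-_0=\Qqp$. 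This second route is the one taken in \cite[Lemma~1.3.15]{CHW}, to which the present corollary is equivalent via the identification $E_i'=r_i$ indicated in the text.
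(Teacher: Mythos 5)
Your main route is the one the paper itself takes, since the paper's proof of Corollary \ref{cor:derivtozero} simply defers to Kashiwara's argument for \cite[Lemma~3.4.7]{Ka}: reduce to a weight vector, induct on height, use Proposition \ref{prop:qcommute} to see that each $E_j''(x)$ is again killed by all $E_i'$, conclude $E_j''(x)=0$ for weight reasons, deduce $[E_j,x]=0$ from \eqref{eq:E'}, and then kill $x$ by acting on a highest weight vector with $\lambda\gg 0$. Up to the last step your write-up is correct, including the base cases.

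The last step, however, rests on a false premise as stated: there is no $\lambda\in P$ ``sufficiently dominant'' for which $M(\lambda)$ has no singular vectors outside $\Qqp v^+_\lambda$. For every dominant integral $\lambda$ the vectors $F_i^{\langle\alpha_i^\vee,\lambda\rangle+1}v^+_\lambda$ are singular, and making $\lambda$ more dominant produces such vectors in lower weights, not fewer of them. What you actually need, and what is true, is the weaker statement that for your fixed $\nu$ and $\lambda$ sufficiently dominant relative to $\height(-\nu)$ there is no nonzero singular vector of weight $\lambda+\nu$: any singular vector of weight $\neq\lambda$ lies in the maximal proper submodule (in each specialization $\pi=\pm1$), which by Proposition \ref{prop:integrable} and Remark \ref{rmk:char=} equals $\sum_i\UU^-F_i^{\langle\alpha_i^\vee,\lambda\rangle+1}v^+_\lambda$, and this submodule has no nonzero vectors of weight $\lambda+\nu$ once $\langle\alpha_i^\vee,\lambda\rangle\geq\height(-\nu)$ for all $i$; freeness of $M(\lambda)$ over $\UU^-$ then gives $x=0$. (Equivalently, follow Kashiwara and use the simple integrable quotient $V(\lambda)_\pm$ together with the injectivity of $\UU^-_\nu\to V(\lambda)_\pm$, $u\mapsto uv^+_\lambda$, for $\lambda\gg0$.) Also note that your proposed shortcut through the polarization of Proposition \ref{prop:Ubilprod} would be circular inside this paper: the nondegeneracy of that form is proved using Corollary \ref{cor:derivtozero} itself. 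Citing \cite[Lemma~1.3.15]{CHW} via the identification \eqref{eq:E=r} is legitimate, since the bilinear form there is constructed with an independent nondegeneracy argument, but that amounts to quoting an equivalent statement rather than proving it.
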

\begin{proof}
The proof  proceeds as in \cite[Lemma 3.4.7]{Ka}.
\end{proof}

\begin{lem}\label{Ka:3.4.6}
Let $i\in I$ and $u\in \UU^-_{\zeta}$ such that $E_i'(u)=0$.
Then for any $\UU$-module $M$ and $m\in M_\lambda$ such that $e_i m=0$,
we have
\[\tK_i^n E_i^{n}um=\pi_i^{n\ang{\alpha_i^\vee,\lambda}}\frac{q_i^{n(2\ang{\alpha_i^\vee,
\lambda+\zeta}+3n+1)}}{(\pi_iq_i-q_i^{-1})^n} (E_i''^n u)m.
\]
\end{lem}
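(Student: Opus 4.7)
The plan is to proceed by induction on $n$, the case $n=0$ being trivial. The two tools I will need are the defining relation \eqref{eq:E'} and the fact that $\tK_i$ and $\tJ_i$ act on a weight vector of weight $\mu$ as the scalars $q_i^{\ang{\alpha_i^\vee,\mu}}$ and $\pi_i^{\ang{\alpha_i^\vee,\mu}}$ respectively.

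For the inductive step I first compute $E_i u m$. The relation \eqref{eq:E'} combined with $E_i'(u)=0$ gives
\[
E_i u \;=\; \pi^{p(i)p(u)} u E_i \;+\; \frac{\tJ_i\tK_i\, E_i''(u)}{\pi_iq_i-q_i^{-1}}.
\]
Applying this to $m$ and using the hypothesis $E_i m=0$ (which I read as the content of ``$e_i m=0$'') kills the first term, leaving
\[
E_i u m \;=\; \frac{\tJ_i\tK_i\, E_i''(u)\,m}{\pi_iq_i-q_i^{-1}}.
\]
Since $E_i''(u)m\in M_{\lambda+\zeta+\alpha_i}$, the product $\tJ_i\tK_i$ contributes the scalar $\pi_i^{\ang{\alpha_i^\vee,\lambda+\zeta+\alpha_i}}q_i^{\ang{\alpha_i^\vee,\lambda+\zeta+\alpha_i}}$. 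At this point I invoke SGCM condition (d): for $i\in\od I$ one has $a_{ij}\in 2\Z$ for every $j$, hence $\ang{\alpha_i^\vee,\zeta}\in 2\Z$; together with $\pi_i^2=1$ this collapses the $\pi_i$-factor to $\pi_i^{\ang{\alpha_i^\vee,\lambda}}$ (the case $i\in\ev I$ is trivial since $\pi_i=1$). Thus
\[
E_i u m \;=\; \pi_i^{\ang{\alpha_i^\vee,\lambda}}\,\frac{q_i^{\ang{\alpha_i^\vee,\lambda+\zeta}+2}}{\pi_iq_i-q_i^{-1}}\,E_i''(u)\,m.
\]

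To iterate, I verify that the inductive hypothesis applies with $u$ replaced by $E_i''(u)\in\UU^-_{\zeta+\alpha_i}$: setting $j=i$ in Proposition~\ref{prop:qcommute} yields $E_i'E_i''=\pi_iq_i^2\,E_i''E_i'$, so $E_i'(E_i''(u))=0$. The induction at level $n$ then gives
\[
\tK_i^n E_i^n E_i''(u)\,m \;=\; \pi_i^{n\ang{\alpha_i^\vee,\lambda}}\,\frac{q_i^{\,n(2\ang{\alpha_i^\vee,\lambda+\zeta}+3n+5)}}{(\pi_iq_i-q_i^{-1})^n}\,(E_i''^{\,n+1}u)\,m,
\]
where the $+5$ arises from $\ang{\alpha_i^\vee,\alpha_i}=2$.

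Finally I assemble $\tK_i^{n+1}E_i^{n+1}u\,m=\tK_i\cdot(\tK_i^nE_i^n)\cdot(E_iu\,m)$, with the outermost $\tK_i$ acting on the weight-$\lambda+\zeta+(n+1)\alpha_i$ vector $(E_i''^{\,n+1}u)\,m$ as $q_i^{\ang{\alpha_i^\vee,\lambda+\zeta}+2(n+1)}$. Summing the three $q_i$-exponent contributions,
\[
\bigl(\ang{\alpha_i^\vee,\lambda+\zeta}+2\bigr)+n\bigl(2\ang{\alpha_i^\vee,\lambda+\zeta}+3n+5\bigr)+\bigl(\ang{\alpha_i^\vee,\lambda+\zeta}+2n+2\bigr)=(n+1)\bigl(2\ang{\alpha_i^\vee,\lambda+\zeta}+3(n+1)+1\bigr),
\]
while the $\pi_i$-exponents combine to $(n+1)\ang{\alpha_i^\vee,\lambda}$, closing the induction. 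The only real obstacle is bookkeeping, specifically the suppression of the spurious $\pi_i^{\ang{\alpha_i^\vee,\zeta}}$-factor via condition (d); without that parity input the clean closed form would not emerge from the induction.
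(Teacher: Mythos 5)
Your induction is correct: the step $E_iu\,m=\frac{\tJ_i\tK_i E_i''(u)m}{\pi_iq_i-q_i^{-1}}$, the use of Proposition~\ref{prop:qcommute} to get $E_i'(E_i''(u))=0$, the weight/scalar bookkeeping for $\tK_i$, $\tJ_i$ (with condition (d) plus (f) collapsing the $\pi_i$-power to $\pi_i^{\ang{\alpha_i^\vee,\lambda}}$), and the exponent arithmetic all check out. This is essentially the paper's proof, which simply invokes Kashiwara's Lemma 3.4.6 and notes that the extra power of $\pi$ comes from the central element $\tJ_i$ --- exactly the computation you carried out explicitly.
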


\begin{proof}
This lemma has essentially the same proof as \cite[Lemma 3.4.6]{Ka}.
The power of $\pi$ comes from the central element $\tJ_i$.
\end{proof}

Our interest in the superalgebra $\bB$ comes from the following result.

\begin{prop}\label{prop:bPaction}
$\UU^-$ is a $\bB'$-module as well as a $\bB$-module, where $f_i$ acts as multiplication by $F_i$
and $e_i$ acts by the map $E_i'$ for all $i\in I$.
\end{prop}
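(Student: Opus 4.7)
The plan is to verify the defining relations of $\bB'$ and $\bB$ directly as identities of operators on $\UU^-$ under the proposed assignments. The commutation relation \eqref{eq:bP'defreln} is essentially a restatement of Lemma~\ref{lem:E'Fcomms}. The Serre relation among the $f_i$'s is tautologically the Serre relation among the $F_i$'s in $\UU^-$. The only nontrivial point is the Serre relation among the $e_i$'s, which I would handle by observing that $\UU^-$ is cyclically generated by $1$ as a $\bB'$-module and then reducing to checking that $S_{ij}\cdot 1=0$.

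For the $\bB'$-module claim, the defining relation \eqref{eq:bP'defreln} translates under $f_j\mapsto F_j$ and $e_i\mapsto E_i'$ into
\[
E_i'(F_j y)=\pi^{p(i)p(j)}q_i^{-a_{ij}}F_j E_i'(y)+\delta_{ij}y,\qquad y\in\UU^-,
\]
which is exactly the first identity in Lemma~\ref{lem:E'Fcomms} after noting $\pi_i^{p(j)}=\pi^{p(i)p(j)}$ and $\ang{\alpha_i^\vee,\alpha_j}=a_{ij}$. This alone gives a well-defined $\bB'$-module structure on $\UU^-$.

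For the $\bB$-module claim, the Serre relation involving $f_i^{b_{ij}-t}f_jf_i^t$ reduces to the Serre relation for the $F$'s in $\UU$ and so holds automatically on $\UU^-$. The subtle point is to show that the element $S_{ij}\in\bB'$ from \eqref{eq:Sij} acts as zero on all of $\UU^-$. Since $\UU^-$ is generated by the $F_i$'s as a $\Qqp$-algebra, it is cyclically generated by $1$ as a $\bB'$-module. By Lemma~\ref{particleserre}, $S_{ij}$ quasi-commutes with every $f_k$ up to a nonzero scalar in $\Qqp$, so it suffices to verify $S_{ij}\cdot 1=0$. But $E_i'(1)=E_j'(1)=0$ straight from the definition \eqref{eq:E'} (since $1$ supercommutes with each $E_k$); hence every summand of $S_{ij}\cdot 1$ either begins with $(E_i')^t$ for some $t\geq 1$ applied to $1$, or is $(E_i')^{b_{ij}}E_j'(1)=0$. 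The principal conceptual input---the quasi-commutativity of $S_{ij}$ with $f_k$ and the use of the bar-consistent Condition~(f) through $a_{ij}p(i)\in 2\Z$---has already been isolated in Lemma~\ref{particleserre}, so the remaining assembly is routine and no further obstacle is expected.
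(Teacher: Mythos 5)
Your proposal is correct and follows essentially the same route as the paper: the $\bB'$-relation is Lemma~\ref{lem:E'Fcomms}, the $f$-Serre relation is inherited from the Serre relation in $\UU^-$, and the $e$-Serre relation is handled by writing any element as a monomial in the $F_i$'s applied to $1$, quasi-commuting $S_{ij}$ past the $f_k$'s via Lemma~\ref{particleserre}, and using $E_k'(1)=0$ to get $S_{ij}\cdot 1=0$. No gaps.
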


\begin{proof}
By Lemma \ref{lem:E'Fcomms}, $\UU^-$ is a $\bB'$-module.
Recall the Serre elements $S_{ij} \in \bB'$ from \eqref{eq:Sij} and denote the $f$-counterparts
by
\[
S'_{ij}=\sum_{t=0}^{b_{ij}} (-1)^t \pi_i^{\binom{t}{2}+tp(j)}
\bbinom{b_{ij}}{t} f_i^{b_{ij}-t}f_jf_i^{t} \in \bB'.
\]
To show that $\UU^-$ is a $\bB$-module, it suffices to show that
$S_{ij}$ and $S'_{ij}$ act as zero on any $y\in\UU^-$.
By the definition of the action and the Serre relations in $\UU$,
\[S'_{ij}y=\Big(\sum_{t=0}^{b_{ij}} (-1)^t \pi_i^{\binom{t}{2}+tp(j)}
\bbinom{b_{ij}}{t} F_i^{b_{ij}-t}F_jF_i^{t} \Big)y=0.\]

For $S_{ij}$, we may assume that $y$ is a monomial in the generators
$F_i$ for $i\in I$, so $y=m(f)1$ where $m(f)\in \bB'$ is a monomial
in the $f_i$ for $i\in I$. By repeated application of Lemma \ref{particleserre},
$S_{i,j}m(f)= c \ m(f)S_{ij}$ for some scalar $c\in \Qqp$.
Since $E_k'(1)=0$ for all $k\in I$, $S_{ij}1=0$
whence $S_{ij}y=0$.
\end{proof}

\begin{cor}
As $\bB$-modules, $\UU^-\cong\bB/\sum_i \bB e_i$.
\end{cor}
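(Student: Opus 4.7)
The strategy is a standard "Verma module style" identification: exhibit $\UU^-$ as a cyclic $\bB$-module generated by a vector annihilated by all $e_i$, and then match sizes. By Proposition~\ref{prop:bPaction}, we have a $\bB$-module structure on $\UU^-$, so $\phi:\bB\to\UU^-$, $b\mapsto b\cdot 1$, is a left $\bB$-module homomorphism. Surjectivity is immediate, since $F_{i_1}\cdots F_{i_k}=\phi(f_{i_1}\cdots f_{i_k})$ and such products span $\UU^-$. Moreover, since $E_i'(1)=0$, we have $e_i\cdot 1 = 0$ and hence $\sum_i \bB e_i\subseteq \ker\phi$. This yields the surjection
\[
\bar\phi:\bB\Big/\sum_i \bB e_i \twoheadrightarrow \UU^-.
\]

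To prove injectivity, I would exploit the boson relation \eqref{eq:bP'defreln}, $e_i f_j =\pi^{p(i)p(j)}q_i^{-a_{ij}}f_je_i + \delta_{ij}$, to show that any monomial in the generators of $\bB$ is congruent modulo $\sum_i \bB e_i$ to an element of the subalgebra $\bB^-\subseteq \bB$ generated by $\{f_i : i\in I\}$. Indeed, each application of this relation either moves an $e$ past an $f$ (reducing the number of ``inversions'') or produces a term of strictly smaller total length; iterating, every element of $\bB$ can be written as $b_0 + \sum_i b_i e_i$ with $b_0\in \bB^-$. In particular the composition
\[
\bB^- \hookrightarrow \bB \twoheadrightarrow \bB\Big/\sum_i \bB e_i
\]
is surjective.

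The final ingredient is the identification $\bB^- \cong \UU^-$ via $f_i\mapsto F_i$. Since $\bB^-$ is generated by the $f_i$ modulo exactly the $(q,\pi)$-Serre relations \eqref{eq:Serre}, and $\UU^-$ admits a Serre presentation (as established in \cite{CHW}), this map is an algebra isomorphism. The composite
\[
\bB^- \twoheadrightarrow \bB\Big/\sum_i \bB e_i \xrightarrow{\bar\phi} \UU^-
\]
sends $f_i$ to $F_i$, hence coincides with this isomorphism. A surjection followed by an arrow whose composite is an isomorphism must itself be an isomorphism, so the first arrow $\bB^- \twoheadrightarrow \bB/\sum_i \bB e_i$ is an isomorphism; therefore $\bar\phi$ is an isomorphism as well.

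The main obstacle is the last step: one needs a genuine PBW-style decomposition $\bB=\bB^-\oplus \sum_i \bB e_i$ together with the fact that the natural map $\bB^-\to \UU^-$ is injective. Equivalently, one needs that $\UU^-$ has no relations beyond the Serre relations, so that $\bB^-$ does not collapse further when mapped into $\UU^-$. Both ingredients rest on structural results for $\UU^-$ proved in \cite{CHW}; granting these, the calculation above is routine.
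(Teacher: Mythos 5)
Your proof is correct and follows essentially the same route as the paper's: the surjection $\bB/\sum_i \bB e_i \to \UU^-$ coming from $e_i\cdot 1=0$ (the final remark in the proof of Proposition~\ref{prop:bPaction}), combined with the identification of the subalgebra $\bB^-$ generated by the $f_i$ with $\UU^-$. The one ingredient you flag as the main obstacle --- that $\bB^-$ satisfies no relations beyond the Serre relations, i.e.\ that the natural surjection $\UU^-\to\bB^-$, $F_i\mapsto f_i$, is injective --- is exactly what the paper also asserts without further comment, and it follows immediately by composing with the action map $b\mapsto b\cdot 1$, since that composite fixes every monomial $F_{i_1}\cdots F_{i_k}$ and is therefore the identity on $\UU^-$.
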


\begin{proof}
The final remark in the proof of Proposition \ref{prop:bPaction} shows that
there is a $\bB$-module homomorphism $\bB/\sum_i \bB e_i\rightarrow \UU^-$.
On the other hand, the $f_i$ generate a subalgebra of $\bB$ isomorphic
to $\UU^-$, so this map must be an isomorphism.
\end{proof}

\subsection{Polarization on $\UU^-$}
\label{subsec:bilformU-}

\begin{prop}\label{prop:Ubilprod}
There is a unique bilinear form  $(\cdot,\cdot)$ on $\UU^-$ satisfying
\[ (1,1)=1,\qquad (f_i y,z)=(y,e_iz)\quad \forall y,z\in\UU^-,\ i\in I.\]
Moreover, this bilinear form is symmetric.
\end{prop}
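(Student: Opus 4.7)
The plan is to follow Kashiwara's approach in \cite[\S3.4]{Ka}, adapted to the covering/super setting via the $(q,\pi)$-boson algebra $\bB$ acting on $\UU^-$ as in Proposition~\ref{prop:bPaction}.

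For existence, I would first construct the form on the free $\Qqp$-superalgebra $\ffpr$ on generators $\{F_i : i \in I\}$ (with inherited parity), extending the operators $E_i'$ to $\ffpr$ by using Lemma~\ref{lem:E'Fcomms} as a recursive definition with $E_i'(1) = 0$. Since each element of $\ffpr$ admits a unique monomial expansion, the rules $(1,1) = 1$, $(1, z) = 0$ for $z$ of nonzero weight, and $(F_i y, z) = (y, E_i'(z))$ unambiguously determine a $\Qqp$-bilinear form on $\ffpr$ by induction on $\height(|y|)$. To descend the form to $\UU^- = \ffpr/(\text{Serre ideal})$, I would show that the Serre ideal lies in the radical, using iterated adjointness together with the $(q,\pi)$-binomial identities underlying Lemma~\ref{particleserre} to reduce the vanishing to Serre-type identities for the $E_i'$ (and $E_i''$) operators, which hold on $\UU^-$ by Proposition~\ref{prop:bPaction}.

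For symmetry, I would introduce the companion operator $E_i''$ from \eqref{eq:E'} and first establish the parallel right adjointness $(y F_i, z) = (y, E_i''(z))$ by an induction mirroring the $E_i'$ case. The symmetry $(y, z) = (z, y)$ on $\UU^-$ then follows by induction on $\height(|y|) + \height(|z|)$, where the inductive step invokes the commutation relation $E_i' E_j'' = \pi^{\parity{i}\parity{j}} q_j^{\langle \alpha_j^\vee, \alpha_i \rangle} E_j'' E_i'$ of Proposition~\ref{prop:qcommute} to reconcile the left and right adjointness recursions and verify the required equality for $(F_i y, F_j z)$.

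Uniqueness then follows as a corollary: iterating (b), the value $(y,z)$ of any such form is determined by the values $(1, z')$ for $z' \in \UU^-$, and once symmetry is in hand, $(1, z') = (z', 1)$ vanishes for $z' \in \UU^-_{-\nu}$ with $\nu \neq 0$, since writing $z' = \sum_i F_i z'_i$ gives $(z', 1) = \sum_i (z'_i, E_i'(1)) = 0$. The main obstacle is the symmetry proof: in the super/covering setting it demands careful bookkeeping of the interacting $q$, $\pi$, and sign factors when invoking the commutation of $E_i'$ with $E_j''$ and checking that the two adjointness recursions agree on the inductive step; a similar care is needed for the Serre descent, where the ``reversed'' $E'$-Serre operator that appears on the right side of the iterated adjointness must be reconciled with the honest Serre element of $\bB$ via the $(q,\pi)$-binomial symmetries.
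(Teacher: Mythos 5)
Your route (a Lusztig-style construction of the form on the free algebra $\ffpr$ followed by descent, with symmetry proved by a second adjointness) is genuinely different from the paper's proof, which is much shorter: there the anti-automorphism $a$ of $\bB$ swapping $e_i\leftrightarrow f_i$ makes $(\UU^-)^*$ a $\bB$-module, the functional $\phi_0$ (value $1$ at $1$, zero on $\sum_i F_i\UU^-$) is killed by the $e_i$-action, and $\UU^-\cong\bB/\sum_i\bB e_i$ yields a $\bB$-map $\Psi:\UU^-\to(\UU^-)^*$ defining the form; symmetry then comes from comparing with the transposed form. Measured against either proof, your key descent step has a genuine gap. You justify killing the Serre ideal by ``Serre-type identities for the $E_i'$ (and $E_i''$) operators, which hold on $\UU^-$ by Proposition~\ref{prop:bPaction}'', but in your setup the operators act on $\ffpr$, about which Proposition~\ref{prop:bPaction} says nothing; worse, stripping the first slot by iterated adjointness produces the \emph{reversed} Serre combination in the $E_i'$, and in the super case (where $p(i)=1$ and $b_{ij}$ is odd) this reversed element is a $\pi$-twisted combination that is not proportional to $S_{ij}$, so neither Lemma~\ref{particleserre} nor Proposition~\ref{prop:bPaction} applies to it as stated. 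What actually closes this step is the stability of the Serre ideal under the reversal anti-involution $\varrho$ of \eqref{eq:anti inv} (from \cite{CHW}), or equivalently the computation that the twisted derivations annihilate the Serre elements; cleaner still is to bypass $\ffpr$ altogether, since $E_i'$ is already defined on $\UU^-$: setting $(F_{i_1}\cdots F_{i_k},z):=\epsilon\big(E_{i_k}'\cdots E_{i_1}'(z)\big)$ (with $\epsilon$ the weight-zero component) is well defined precisely because of Proposition~\ref{prop:bPaction} \emph{together with} $\varrho$-stability of the ideal.

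Two further problems. The right adjointness you posit, $(yF_i,z)=(y,E_i''z)$, is false as written; the correct identity is \eqref{eq:Radj}, $(PF_i,Q)=\pi_i^{p(P)}(P,\Ad(\tK_i)E_i''Q)$, which the paper proves later from Proposition~\ref{prop:qcommute}, and the missing $\pi$- and $q$-factors are exactly what your induction must track. In fact $E_i''$ is unnecessary here: once the normalization $(1,z)=0$ for $z$ of nonzero weight is built into the construction, symmetry follows by induction on height using only Lemma~\ref{lem:E'Fcomms} and $q_i^{-a_{ij}}=q_j^{-a_{ji}}$, $\pi_i^{p(j)}=\pi_j^{p(i)}$. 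Finally, your uniqueness argument is circular: you invoke symmetry to force $(1,z')=0$, but symmetry has been established only for the form you constructed, not for an arbitrary competitor satisfying the two listed properties, so at best you get uniqueness among \emph{symmetric} such forms. The determination of the form from the axioms really hinges on pinning down $(1,z')$ for $z'$ of nonzero weight; the paper's logic runs the other way around (the constructed form and its transpose both satisfy this normalization by construction, uniqueness is proved first, and symmetry is then deduced from it), and your write-up needs the same reordering to deliver the statement as claimed.
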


\begin{proof}
First note that there is a unique linear map $a:\bB\rightarrow \bB$
with $a(e_i)=f_i$ and $a(f_i)=e_i$, and $a(xy)=a(y)a(x)$ for $x,y\in \bB$.
Using this, $(\UU^-)^*$ becomes a $\bB$-module via
$(p\cdot\phi)(y)=\phi(a(p)\cdot y)$ for $p\in \bB$, $y\in \UU^-$ and
$\phi\in (\UU^-)^*$.

Let $\phi_0\in (\UU^-)^*$ be defined by $\phi_0(1)=1$ and
$\phi_0(\sum_i f_i\UU^-)=0$. Note that $e_i\phi_0(x)=\phi_0(f_ix)=0$
for all $x\in \UU^-$, $i\in I$. Therefore, there is a $\bB$-homomorphism
$\Psi:\UU^-\rightarrow (\UU^-)^*$ factoring through the map
$\bB/\sum\bB e_i\rightarrow (\UU^-)^*$; in particular, $1\mapsto \phi_0$.

Define $(\cdot,\cdot)$ on $\UU^-$ by $(y,z)=\Psi(y)(z)$. Then by construction,
$(1,1)=\phi_0(1)=1$ and $(f_iy,z)=f_i\Psi(y)(z)=\Psi(y)(e_iz)=(y,e_iz)$.
Clearly, these properties completely determine the bilinear form.
Then since the form $(\cdot,\cdot)'$ defined by $(y,z)'=\Psi(z)(y)$ satisfies the
same properties, the symmetry follows by the uniqueness of such a bilinear form.
\end{proof}

\begin{cor}
The bilinear form $(\cdot, \cdot)$ on $\UU^-$ is nondegenerate; moreover,
$(\UU^-_\nu,\UU^-_\mu)=0$ if $\nu\neq\mu$.
\end{cor}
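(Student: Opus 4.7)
The plan is to handle the two assertions in the order they are stated, with weight orthogonality coming first since it reduces the nondegeneracy question to a single weight space.

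For the weight orthogonality, I will induct on the height of $\nu$ (we may clearly assume $y\in \UU^-_\nu$ and $z\in \UU^-_\mu$ are homogeneous). If $\nu=0$ then $y$ is a scalar multiple of $1$, so $(y,z)=y\cdot\phi_0(z)$; but $\phi_0$ vanishes on $\sum_i f_i\UU^- = \bigoplus_{\mu\ne 0}\UU^-_\mu$, giving $(y,z)=0$ unless $\mu=0$. For the inductive step, write $y=\sum f_{i_k} y_k'$ with each $y_k'\in \UU^-_{\nu+\alpha_{i_k}}$ of smaller height (possible since $\nu\ne 0$). Then the defining adjunction gives $(f_{i_k}y_k',z)=(y_k',E_{i_k}'(z))$, and by Lemma~\ref{lem:E'Fcomms} the operator $E_{i_k}'$ sends $\UU^-_\mu$ into $\UU^-_{\mu+\alpha_{i_k}}$, so by induction this vanishes when $\nu+\alpha_{i_k}\ne\mu+\alpha_{i_k}$, i.e.\ when $\nu\ne\mu$.

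For nondegeneracy, let $R=\{y\in \UU^-\mid (y,z)=0\ \forall z\in\UU^-\}$ be the radical. By the weight orthogonality just established and the fact that the bilinear form is homogeneous in the obvious sense, $R=\bigoplus_\nu (R\cap \UU^-_\nu)$, so it suffices to show $R=0$. The key observation is that $R$ is stable under every $E_i'$: using the symmetry of $(\cdot,\cdot)$ together with the adjunction, for $y\in R$ and any $z\in\UU^-$ we have
\[
(E_i'(y),z)=(e_iy,z)=(z,e_iy)=(f_iz,y)=(y,f_iz)=0,
\]
so $E_i'(y)\in R$. Now suppose $R\ne 0$, and choose a nonzero homogeneous $y\in R\cap \UU^-_\nu$ of minimal height $|\nu|$. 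Since $E_i'$ lowers height by one and $E_i'(y)\in R$ for every $i$, minimality forces $E_i'(y)=0$ for all $i$. Corollary~\ref{cor:derivtozero} then gives $y\in \Qqp$, so $\nu=0$ and $y=c\cdot 1$ for some $c\in\Qqp$. But $(y,1)=c\cdot(1,1)=c$, and $y\in R$ forces $c=0$, contradicting $y\ne 0$. Hence $R=0$, as desired.

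The main (and only real) obstacle is verifying that the radical is preserved by the operators $E_i'$; once this is in hand, Corollary~\ref{cor:derivtozero} does essentially all the remaining work, and the rest of the argument is a standard minimal height induction.
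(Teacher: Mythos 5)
Your proof is correct and follows essentially the same route as the paper: weight orthogonality by induction on height, and nondegeneracy via the observation that symmetry plus the adjunction makes the radical stable under the operators $e_i=E_i'$, so that Corollary~\ref{cor:derivtozero} forces any offending element down to a scalar. Your packaging of the nondegeneracy step as a global radical with a minimal-height element is just a cosmetic variant of the paper's height induction on the weight spaces.
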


\begin{proof}
The weight claim follows from the definition of the bilinear form and may be
shown by induction on the height of weights.
Nondegeneracy of the bilinear form may be shown also by induction on height with a crucial
observation as follows: if $0 \neq y\in\UU^-_\nu$ with $\nu \neq 0$ such that $(y,\UU^-_\nu)=0$,
then $(e_iy,\UU^-_{\nu+i})=0$ for all $i\in I$, whence $e_iy=0$ for all $i\in I$.
But then by Corollary \ref{cor:derivtozero} $\nu=0$, and hence we have a contradiction.
\end{proof}

Note that $\UU^- \cong \UU^-_+ \oplus \UU^-_-$  as $\bB$-modules, where $\UU^-_\pm :=\UU^-/\ang{\pi\pm 1}$.
The bilinear form $(\cdot,\cdot)$ on $\UU^-$ restricts to bilinear forms on $\UU^-_\pm$, still
denoted by $(\cdot,\cdot)$.
The bilinear form $(\cdot,\cdot)$ will be referred to as the {\em polarization} on $\UU^-$, $\UU^-_+$ or $\UU^-_-$.
Corollary~\ref{cor:derivtozero} implies the following.

\begin{cor}
The $\bB$-modules $\UU^-_\diamond$, for $\diamond \in \{+,-\}$, are simple.
\end{cor}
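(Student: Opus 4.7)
The plan is to reduce the simplicity claim to Corollary~\ref{cor:derivtozero} via a standard ``lowest height'' argument. Let $\diamond \in \{+,-\}$ and let $N \subseteq \UU^-_\diamond$ be a nonzero $\bB$-submodule; the goal is to show $N = \UU^-_\diamond$, and I would accomplish this by proving $1 \in N$.

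First I would observe that $\UU^-_\diamond$ is generated by $1$ as a $\bB$-module: applying $f_{i_1} \cdots f_{i_k}$ to $1$ produces $F_{i_1} \cdots F_{i_k}$, and such monomials span $\UU^-_\diamond$ over the base field. Hence $1 \in N$ will immediately force $N = \UU^-_\diamond$. Next, I would note that the weight decomposition $\UU^- = \bigoplus_{\nu \in Q^-}\UU^-_\nu$ descends to $\UU^-_\diamond$, and the $\bB$-action respects this grading in the obvious way ($f_i$ lowers weight by $\alpha_i$, while $e_i = E_i'$ raises it). Consequently, $N$ is itself weight-graded, and I may choose a nonzero homogeneous $y \in N$ whose weight $-|y| \in Q^+$ has minimal height.

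The key step is to rule out $|y| \neq 0$. Here I invoke Corollary~\ref{cor:derivtozero}: viewing $y$ as an element of $\UU^-$ via the direct summand embedding $\UU^-_\diamond \hookrightarrow \UU^- \cong \UU^-_+ \oplus \UU^-_-$, if $E_i'(y) = 0$ for all $i \in I$, then $y \in \Qqp$; intersecting with $\UU^-_\diamond$ (on which $\pi$ acts as $\diamond$) gives $y \in \Qq \cdot 1$, forcing $|y|=0$. Contrapositively, if $|y| \neq 0$, there exists $i$ with $0 \neq e_i \cdot y = E_i'(y) \in N$, a nonzero element of strictly smaller height, contradicting the minimality of $y$.

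Thus $|y|=0$, so $y$ is a nonzero scalar multiple of $1$, giving $1 \in N$ and hence $N = \UU^-_\diamond$ by the first step. I do not anticipate any real obstacle: the only point requiring care is the transfer of Corollary~\ref{cor:derivtozero} from $\UU^-$ to $\UU^-_\diamond$, which is painless because the operators $E_i'$ and the decomposition $\UU^- = \UU^-_+ \oplus \UU^-_-$ are compatible.
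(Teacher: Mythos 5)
Your overall route is the same as the paper's: the corollary is deduced directly from Corollary~\ref{cor:derivtozero}, exactly as you intend, by producing a nonzero multiple of $1$ inside any nonzero submodule and then using that $1$ generates $\UU^-_\diamond$ over $\bB$ (Proposition~\ref{prop:bPaction}). However, one step is not justified as written: you assert that an arbitrary $\bB$-submodule $N$ is weight-graded ``because the action respects the grading.'' Homogeneity of the operators $e_i,f_i$ does not by itself force submodules of a graded module to be graded --- $\bB$ contains no Cartan-type elements whose action would separate the weight spaces --- and in the present situation ``every submodule of $\UU^-_\diamond$ is graded'' is in effect a consequence of the simplicity you are trying to prove, so it cannot be assumed. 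Without it, your choice of a homogeneous $y\in N$ of minimal height is not available.

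The gap closes easily and within your framework. Take any nonzero $u\in N$, not necessarily homogeneous. Each $e_i=E_i'$ raises weights by $\alpha_i$, hence strictly decreases the height of every nonzero homogeneous component, and heights are bounded below by $0$; equivalently, $\UU^-_\diamond$ lies in the category $\curlyP$. So there is a maximal $k\ge 0$ for which some monomial $e_{i_1}\cdots e_{i_k}u$ is nonzero; call this element $v\in N$. By maximality, $e_iv=0$ for all $i\in I$, and Corollary~\ref{cor:derivtozero} --- which requires no homogeneity hypothesis --- gives, after passing through the decomposition $\UU^-\cong\UU^-_+\oplus\UU^-_-$ exactly as you describe, that $v$ is a nonzero element of $\Qq\cdot 1$. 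Thus $1\in N$ and $N=\UU^-_\diamond$. With this replacement of the ``minimal-height homogeneous element'' step, your argument is correct and coincides with the paper's intended proof.
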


\subsection{Crystal basis of $\UU^-$}
\label{subsec:CBUU-}

We define a category $\curlyP$ as follows.
The objects of $\curlyP$ are $\bB$-modules $M$
such that for any $m\in M$, there exists an $t\in\N$ such that
for any $i_1,\ldots, i_t\in I$, $e_{i_1}\ldots e_{i_t}m=0$.
The homomorphisms are $\bB$-module homomorphisms.
Then we have $\UU^-_\diamond \in \curlyP$, for $\diamond \in \{\emptyset,+,-\}$,
where by convention we drop the subscript $\emptyset$ in case of $\diamond =\emptyset$.
In fact, $\UU^-_\diamond$, for $\diamond \in \{+,-\}$,  are the
only simple modules up to isomorphism and $\curlyP$ is semisimple.

\begin{lem}
Let $M\in \curlyP$. For each $i\in I$, every $m\in M$
has a unique expression of the form
\[m=\sum_{t\ge 0} f_i^{(t)}m_t\]
where $m_t\in \ker e_i$  and $m_t$ is nonzero for finitely many $t$.
\end{lem}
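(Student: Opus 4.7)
The plan is to deduce this from the rank-one Lemma~\ref{lem:Fprops} by restricting attention to the subalgebra of $\bB$ generated by $e_i$ and $f_i$. Specializing the defining relation \eqref{eq:bP'defreln} at $j=i$ and using $p(i)^2\equiv p(i)\pmod{2}$ produces
\[e_i f_i = \pi_i q_i^{-2} f_i e_i + 1,\]
which matches the defining relation of $\bF$ after the parameter substitution $(q,\pi)\leftrightarrow(q_i,\pi_i)$. All identities from the discussion of $\bF$, and in particular Lemma~\ref{lem:Fprops}, rely solely on this commutation relation and make no use of the parity of $e,f$; they therefore remain available for $e_i,f_i$ after the substitutions $q\mapsto q_i$, $\pi\mapsto \pi_i$.

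To invoke Lemma~\ref{lem:Fprops}(ii) I must check that $M$ is locally $e_i$-finite at every $m$. This is immediate from the defining condition of $\curlyP$: there exists $t\in \N$ annihilating $m$ under every length-$t$ word in the $e_j$'s, and in particular $e_i^t m = 0$. The rank-one version of Lemma~\ref{lem:Fprops}(ii) then produces the unique decomposition $m=\sum_{t\ge 0} f_i^{(t)} m_t$ with $m_t\in \ker e_i$, together with the explicit formula
\[m_t = (\pi_i q_i)^{\binom{t}{2}} P_i e_i^t m, \qquad P_i=\sum_{n\ge 0}(-1)^n q_i^{-\binom{n}{2}} f_i^{(n)} e_i^n.\]
Local $e_i$-finiteness ensures $e_i^t m = 0$ for $t$ sufficiently large, so only finitely many $m_t$ are nonzero.

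The main (rather minor) obstacle is simply to articulate cleanly that Lemma~\ref{lem:Fprops} transfers from $\bF$ to the rank-one subalgebra $\langle e_i, f_i\rangle\subseteq \bB$ by the parameter substitution above; once this is granted, the lemma reduces to a one-generator application of the rank-one prototype, and no genuinely new super input is required.
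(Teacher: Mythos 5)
Your proposal is correct and follows essentially the same route as the paper: the paper's proof likewise observes that the definition of $\curlyP$ gives local $e_i$-finiteness and that $e_i,f_i$ generate a copy of $\bF_{q_i,\pi_i}$ inside $\bB$, then invokes Lemma~\ref{lem:Fprops}(ii). Your extra remarks (the relation check via \eqref{eq:bP'defreln} with $p(i)^2\equiv p(i)$, and the explicit formula for $m_t$) are just spelled-out versions of the same argument.
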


\begin{proof}
By the definition of $\curlyP$, each $e_i$ is locally finite on $M$.
Note that $e_i$ and $f_i$ generate a subalgebra of $\bB$
isomorphic to $\bF_{q_i, \pi_i}$ and so Lemma \ref{lem:Fprops}(ii)
finishes the proof.
\end{proof}

Let $i\in I$.
 Let $M\in \curlyP$ and $m\in M$
such that $m=\sum_{t} f_i^{(t)}m_t$ with $m_t\in \ker e_i$.
We define the Kashiwara operators
\[\tilde{e}_i m=\sum_{t} f_i^{(t-1)}m_t,
\quad \text{ and } \tilde{f}_i m=\sum_{t} f_i^{(t+1)}m_t.\]
Note that these operators (super)commute with $\bB$-module homomorphisms.

The action of $\bB$ on $\UU^-_+ =\UU^-/\ang{\pi-1}$ factors through $\bB/\ang{\pi-1}$, and then
we are in Kashiwara's original setting of $q$-boson algebra and its simple module. In this case,
It is well known (\cite{Ka}) that crystal basis on $\UU^-_+$ exists.
We shall formulate variants of the notion of crystal bases applicable to $\UU^-_\diamond$
where $\diamond \in \{\emptyset, -\}$. To that end, we consider the subcategory $\curlyP_-$ of $\curlyP$ which consists of $\bB$-modules
on which $\pi$ acts as $-1$,
as well as the subcategory $\curlyP_\pi$ of  $\curlyP$ which consists of $\bB$-modules which are free $\Qqp$-modules.

Let $\A\subseteq \Qq$ be the subring of functions regular at $q=0$.
Let $R$ be an arbitrary subring of $\Q$ with $1$.

\begin{dfn}\label{dfn:pibasis}
\begin{enumerate}
\item
For a free  $R^\pi$-module $F$, a {\bf $\pi$-basis} for $F$
is a subset $B$ of $F$ such that
$B^0 \subseteq B \subseteq \pi B^0\cup B^0$, for some basis $B^0$ of the free $R^\pi$-module $F$.

\item
For a free $R$-module $F$ on which $\pi$ acts as $-1$, a {\bf $\pi$-basis} for $F$
is a subset $B$ of $F$ such that
$B^0 \subseteq B \subseteq \pi B^0\cup B^0$, for some basis $B^0$ of the free $R$-module $F$.

\noindent (For $F$ on which $\pi$ acts as $1$, the definition forces a $\pi$-basis to be a genuine basis.)
%

\item Assume in addition that $F$ admits a non-degenerate  bilinear form $(\cdot,\cdot)$. The $\pi$-basis $B$ in (1) or (2) above is called
{\bf $\pi$-orthonormal} if $B^0$ is orthogonal with respect to $(\cdot,\cdot)$ and $(b,b)\in\{1,\pi\}$ for $b\in B^0$.
\end{enumerate}
Of course, a $\pi$-basis $B$ above gives rise to a ``maximal" $\pi$-basis $\pi B^0\cup B^0$.
\end{dfn}

\begin{dfn}  \label{dfn:crb}
A free $\A$-submodule $L$
of a $\bB$-module $M$ in the category $\curlyP_-$ is called a {\bf crystal lattice} if
\begin{enumerate}
\item $L\otimes_{\A} \Qq=M$;
\item $\te_i L\subseteq L$ and $\tf_i L \subseteq L$.
\end{enumerate}
(Note that $L/qL$ is a $\Q$-module.)
$M$ is said to have a {\bf crystal basis} $(L,B)$ if
a subset $B$ of $L/qL$ satisfies
\begin{enumerate}
\item[(3)] $B$ is a $\pi$-basis of $L/qL$; 
\item[(4)] $\te_i B\subseteq B\cup\set{0}$ and $\tf_i B\subseteq B$;
\item[(5)] For $b\in B$, if $\te_ib \neq 0$ then $b=\tf_i\te_ib$.
\end{enumerate}
A crystal basis $(L,B)$ is called {\bf maximal} if $B$ is a maximal $\pi$-basis of $L/qL$.
\end{dfn}

\begin{rmk}\label{rmk:piCrB}
Accordingly, {\em a {\bf crystal $\pi$-lattice} $L$ and a {\bf crystal $\pi$-basis} $(L,B)$ of a $\bB$-module $M$  in the category   $\curlyP_\pi$}
consists of a free $\A^\pi$-submodule $L$
of $M$ and a subset $B$ of the $\Q^\pi$-module $L/qL$
satisfying the axioms as in Definition~\ref{dfn:crb}
with (1) 
modified as
\begin{enumerate}
\item[($1'$)]
 $L\otimes_{\A^\pi} \Qqp=M$.
\end{enumerate}
Note the meaning of (3) is adjusted according to Definition~\ref{dfn:pibasis}.
\end{rmk}


We let $\cL(\infty)_+$ and $\cL(\infty)_-$ (reps. $\cL(\infty)$) be the $\A$-submodules of $\UU^-_+$ and $\UU^-_-$
(respectively, the $\Ap$-submodule of $\UU^-$)
generated by all possible $\tf_{i_1}\ldots\tf_{i_t} 1$. We let $B(\infty)_\diamond=\set{\tf_{i_1}\ldots\tf_{i_t}1}$ be
the subset of $\cL(\infty)_\diamond/q\cL(\infty)_\diamond$, where $\diamond \in \{\emptyset, +,-\}$ and
by convention again we drop the subscript $\diamond$ in case of $\diamond =\emptyset$. We shall prove in Section~ \ref{sec:grandloop} that
$(\cL(\infty)_\diamond, B(\infty)_\diamond)$ is a crystal basis of $\UU^-_\diamond$, for $\diamond \in \{\emptyset, +,-\}$.
Note that the bilinear form allows us to define a dual lattice in $\UU^-_\diamond$, for $\diamond \in \{\emptyset, +,-\}$ as follows:
\[\cL(\infty)_\pm^\vee=\set{u\in \UU^-_\pm \mid  (u,\cL(\infty)_\pm)\subseteq \A},
\qquad
\cL(\infty)^\vee=\set{u\in \UU^-\mid  (u,\cL(\infty))\subseteq \A^\pi}.\]


\section{Crystal bases and polarization} 
\label{sec:CBpolar}

In this section we formulate the main theorems of crystal bases for $\UU^-$ and integrable $\UU$-modules.
We also formulate the tensor product rule for crystal bases.

\subsection{Kashiwara operators for $\UU$-modules}

We start with the following observation.

\begin{lem}
Let $M\in \curlyO$. For each $i\in I$, every $m\in M^\lambda$ has a unique expression of the form
\[m=\sum_{t \ge 0} F_i^{(t)}m_t\]
where $m_t\in M^{\lambda+t\alpha_i}\cap \ker E_i$  are
nonzero for finitely many $t$.
\end{lem}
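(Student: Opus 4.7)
The plan is to mimic the decomposition Lemma~\ref{lem:Fprops}(ii) for the $(q,\pi)$-boson, but replaced by a module-theoretic statement over the rank-one subalgebra of $\UU$ generated by $E_i,F_i,\tK_i^{\pm 1},\tJ_i^{\pm 1}$. The starting observation is that $E_i$ acts locally nilpotently on any $M\in\catO$: since the weights of $M$ lie in a finite union $\bigcup_k(\mu_k-Q^+)$, for any fixed $m\in M^\lambda$ the weight $\lambda+N\alpha_i$ of $E_i^N m$ must eventually leave this support, so $E_i^{N+1}m=0$ for $N$ sufficiently large.

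I would then prove existence by induction on the minimal $N$ with $E_i^{N+1}m=0$. The base case $N=0$ is trivial. For the inductive step, the key computation is an explicit formula, obtained by iterating the defining commutation \eqref{eq:relEF} together with the fact that $\tJ_i$ is central, that
\[
E_i^{N}F_i^{(N)}\,y \;=\; c_{N,\mu}\,y,\qquad y\in \ker E_i\cap M^{\mu},
\]
where $c_{N,\mu}\in\Qqp$ is a product of $(q,\pi)$-integers determined by $N$ and $\ang{\alpha_i^\vee,\mu}$ (of the general shape $\prod_{k=1}^{N}[\ang{\alpha_i^\vee,\mu}-N+k]_i$, with $\pi$-signs coming from $\tJ_i$). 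Granting that $c_{N,\lambda+N\alpha_i}$ is a nonzero scalar on the relevant subspace, I would set
\[
m_N \;=\; c_{N,\lambda+N\alpha_i}^{-1}\,E_i^{N}m \;\in\; \ker E_i\cap M^{\lambda+N\alpha_i},
\]
and observe that $m-F_i^{(N)}m_N$ is killed by $E_i^{N}$, so the inductive hypothesis furnishes the remaining $m_t$ for $t<N$. Uniqueness goes the same way in reverse: if $\sum_t F_i^{(t)}m_t=0$ with $m_t\in\ker E_i\cap M^{\lambda+t\alpha_i}$, applying $E_i^{N}$ for the largest $N$ with $m_N\neq 0$ yields $c_{N,\lambda+N\alpha_i}m_N=0$, so $m_N=0$, and one descends.

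The step I expect to require the most care is the scalar identity for $E_i^{N}F_i^{(N)}$ and, crucially, the nonvanishing of $c_{N,\mu}$ on the subspaces where it is actually used. In the purely even $\mathfrak{sl}_2$-setting this is standard, but here one must track the $\pi_i$-signs introduced via $\tJ_i$ and use the bar-consistent hypothesis (f) in \S\ref{subsec:SGCM} to see that $c_{N,\mu}$ lies in $\Zpqqi$ rather than in the zero-divisor ideal generated by $\pi\pm 1$. Once this verification is in hand, the existence and uniqueness assertions are both formal consequences of the identity combined with the local $E_i$-nilpotency; the rest of the argument is a direct adaptation of the boson decomposition proof.
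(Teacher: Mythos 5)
Your route is genuinely different from the paper's: the paper restricts $M$ to the rank-one subalgebra $\UU_i$ (the quantum $\osp(1|2)$ when $p(i)=1$) and invokes complete reducibility into finite-dimensional simple $\UU_i$-modules, citing \cite{CW, CHW}, with uniqueness as in \cite[\S 2.2]{Ka}; you instead run an induction on the $E_i$-nilpotency degree based on the identity $E_i^NF_i^{(N)}y=c_{N,\mu}\,y$ for $y\in\ker E_i\cap M^\mu$. The identity itself is fine (with $c_{N,\mu}=\pi_i^{\bullet}\prod_{k=1}^N[\ang{\alpha_i^\vee,\mu}-N+k]_i$), but the step you explicitly ``grant'' --- invertibility of $c_{N,\lambda+N\alpha_i}$ --- is the whole content, and your proposed verification aims at the wrong target. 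The danger is not that $c_{N,\mu}$ falls into the ideal generated by $\pi\pm1$: every $[n]_i$ with $n\neq0$ is already invertible in $\Qqp$ since both specializations $\pi=\pm1$ are nonzero, so no bar-consistency bookkeeping is needed. The danger is that some factor is literally $[0]_i$, which happens exactly when $-2N\le\ang{\alpha_i^\vee,\lambda}\le -N-1$, and nothing in $M\in\catO$ together with local $E_i$-nilpotency rules this out. Indeed, at the level of generality you work in the statement is false: already for $I=\set{i}$ even and a Verma module with $n=\ang{\alpha_i^\vee,\lambda}\in\Z_{\ge0}$, the vector $F_i^{(n+1)}v_\lambda$ lies in $\ker E_i$, so it admits the two distinct decompositions with $(m_0,m_{n+1})=(F_i^{(n+1)}v_\lambda,0)$ and $(0,v_\lambda)$; correspondingly the scalar your uniqueness step divides by is $\prod_{k=0}^{n}[k]_i=0$. (Existence fails similarly for the dual Verma module.)

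The missing input is integrability, i.e.\ local finiteness of $F_i$ as well: the lemma is meant and used for integrable modules (this is the hypothesis in Kashiwara's original lemma, and the paper's one-line proof is exactly the $\UU_i$-complete-reducibility statement, which presupposes it). With that input your computation does close: a nonzero $E_i$-highest vector $E_i^Nm$ of weight $\lambda+N\alpha_i$ lies in finite-dimensional $\UU_i$-strings whose weights are symmetric, which forces $\ang{\alpha_i^\vee,\lambda}+N\ge0$, so each factor $[\ang{\alpha_i^\vee,\lambda}+N+k]_i$ with $1\le k\le N$ is nonzero and $c_{N,\lambda+N\alpha_i}$ is invertible in $\Qqp$ (and a non-zero-divisor, as needed for uniqueness). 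But that weight constraint is precisely the rank-one representation theory of (quantum) $\osp(1|2)$ that the paper quotes; it has to be proved or cited, not granted, and as written both your existence and your uniqueness arguments stall at this point.
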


\begin{proof}
When $p(i)=1$, $M$ is a direct sum of simple
$\UU_i$-modules, where $\UU_i$ is the quantum group of $\osp(1|2)$; see
\cite{CW, CHW}.
Uniqueness is proved similarly to the case when $p(i)=0$ (see \cite[\S 2.2]{Ka}).
\end{proof}

\begin{dfn}
Let $m\in M^\lambda$ with
\[m=\sum_{t \ge 0} F_i^{(t)}m_t\]
where $m_t\in M^{\lambda+t\alpha_i}\cap \ker e_i$ are nonzero for finitely many $t$.
We define
\[\tilde{e}_i m=\sum_{t} F_i^{(t-1)}m_t, \qquad \tilde{f}_i m=\sum_{t} F_i^{(t+1)}m_t.\]
\end{dfn}
Note that $\te_i m\in M^{\lambda+\alpha_i}$ and  $\tf_i m\in M^{\lambda-\alpha_i}$.
Moreover, $\te_i$ and $\tf_i$ (super)commute with $\UU$-module homomorphisms.

Now recall the definition of the rings $\A$ and $\Ap$ from \S \ref{subsec:CBUU-}.

\begin{dfn}
Let $M$ be a $\UU$-module in the category $\curlyO$.
A free $\A$-submodule $\cL$
of $M$ is called a crystal lattice of $M$ if
\begin{enumerate}
\item $\cL\otimes_\A \Qq=M$;
\item $\cL=\bigoplus_{\lambda\in P} \cL_\lambda$ where
$\cL_\lambda=\cL\cap M_\lambda$ for all $\lambda\in P$;
\item $\te_i \cL\subseteq \cL$ and $\tf_i \cL \subseteq \cL$.
\end{enumerate}
A pair $(\cL,B)$ is called a crystal basis of $M$ if a subset $B$ of the $\Q$-module $\cL/q\cL$
satisfies
\begin{enumerate}
\item[(4)] $B$ is a $\pi$-basis of $\cL/q\cL$ over $\Q$;
\item[(5)] $B=\coprod_{\lambda\in P} B_\lambda$ where
$B_\lambda=B\cap \cL_\lambda/q\cL_\lambda$,
\item[(6)] $\te_i B\subseteq B\cup\set{0}$ and $\tf_i B\subseteq B\cup\set{0}$;
\item[(7)] For $b,b'\in B$, $\te_ib = b'$ if and only if $b=\tf_ib'$.
\end{enumerate}
\end{dfn}
Also a $\pi$-version of crystal basis for $\Qqp$-free integrable modules can be formulated
similarly as in Remark~\ref{rmk:piCrB}.

\begin{rmk}\label{rmk:uniqueness}
We shall set out to prove the existence of the crystal bases for the integrable modules $V(\lambda)_\pm$, for $\la \in P^+$.
Assume for the moment that we have done this. Since these axioms are unaffected under direct
sums of lattices and parity changes, we can endow any integrable module
$M$ with a crystal basis built out of direct sums of the simples.
Uniqueness of a maximal crystal basis on $M$ (up to isomorphism) can be proved by the same arguments
as in \cite[\S 2.6]{Ka}.
\end{rmk}

\begin{example}
\label{ex:rank1}
Let $I=\set{i}$. Then the simple modules are $(n+1)$-dimensional modules
$V(n)_\pm$ for $n\in \Z_{\ge 0}$. Let  $v_n^+$ denote a highest weight vector in $V(n)_\pm$.  Define the $\A$-lattice
$\cL(n)_\pm =\bigoplus_{k=0}^n \A F^{(k)}v_n^+$ in $V(n)_\pm$, and
$B(n)_\pm =\set{F^{(k)}v_n^+ +q\cL(n)_\pm \mid 0\le k \le n}$ (the index $i$ is suppressed here).
Then $(\cL(n)_\pm , B(n)_\pm)$ is a crystal basis of $V(n)_\pm$. In this case, $B(n)_\pm$
is actually a genuine $\Q$-basis for $\cL(n)_\pm/q\cL(n)_\pm$.
\end{example}

\begin{example}
\label{ex:CBla}
Let $\lambda\in P^+$ and let $v^+_\lambda$ be a highest weight vector
of $V(\lambda)_\pm$. Consider the subset
$B(\lambda)_\pm :=\set{\tf_{i_1}\ldots \tf_{i_t} v^+_{\lambda}}\setminus\set 0$ of $V(\lambda)_\pm$.
Let $\cL(\lambda)_\pm$ be the $\A$-submodule of $V(\la)_\pm$ generated by $B(\lambda)_\pm$.
We shall prove in Section~ \ref{sec:grandloop}
that $(\cL(\lambda)_\pm, B(\lambda)_\pm)$ is a (minimal) crystal basis in contrast to the maximal
crystal basis $(\cL(\lambda)_\pm, \pi B(\lambda)_\pm \cup B(\lambda)_\pm)$ in the sense of Definition~\ref{dfn:pibasis}
(of course, the case of $+$ was already in \cite{Ka}).

\end{example}

\begin{example}
\label{ex:oddodd}
Assume that $I_1$ contains $i,j$ such that $a_{ij}=a_{ji}=0$.  Then $\tf_i\tf_j=\pi \tf_j\tf_i$,
and hence $\pi B(\lambda)_- \cap B(\lambda)_- \not=\emptyset$ for various $\la \in P^+$.
\end{example}

\subsection{Polarization}
\label{subsec:polar}

Let $\tau_1:\UU\rightarrow \UU$ be the anti-automorphism defined by
\[\tau_1(E_i)=q_i^{-1}\tK_i^{-1}F_i,\,\,
\tau_1(F_i)=q_i^{-1}\tK_iE_i,\,\,
\tau_1(K_\mu)=K_\mu,\,\,  \tau_1(J_\mu)=J_\mu,
\quad (i\in I, \mu \in P^\vee)
\]
such that $\tau_1(xy)=\tau_1(y)\tau_1(x)$ for $x, y \in \UU$. One checks that $\tau_1^2=1$.
(Note a typo in \cite[\S2.5]{Ka} where $q_it_i^{-1}f_i$ should read $q_i^{-1}t_i^{-1}f_i$.)

\begin{prop}\label{prop:pol}
Let $\la \in P^+$.
There is a unique bilinear form  $(\cdot,\cdot)$ on $V(\lambda)_+$ and $V(\la)_-$ respectively,
which satisfies $(v^+_\lambda,v^+_\lambda)=1$ and
\begin{equation}\label{eq:polar}
 (u v,w)
=(v,\tau_1(u)w), \qquad \forall u\in\UU,\ v,w\in V(\lambda)_\pm.\end{equation}
Moreover, this bilinear form on $V(\la)_\pm$ is symmetric.
\end{prop}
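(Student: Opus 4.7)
The plan is to construct the form by realizing $V(\lambda)_\pm$ inside the $\tau_1$-twisted graded dual, mirroring the strategy used for $\UU^-$ in Proposition~\ref{prop:Ubilprod}. First I would address uniqueness. Since $\tau_1(K_\mu) = K_\mu$ and $\tau_1(J_\mu) = J_\mu$, the contravariance property \eqref{eq:polar} forces $(V(\lambda)_\pm^\mu, V(\lambda)_\pm^\nu) = 0$ whenever $\mu \ne \nu$ by comparing $K_\mu$-eigenvalues on both sides. For arbitrary $v = x v^+_\lambda$ and $w = y v^+_\lambda$ with $x, y \in \UU^-$, iterating contravariance yields $(v, w) = (v^+_\lambda, \tau_1(x) y v^+_\lambda)$; by the orthogonality just established, only the $\lambda$-weight component of $\tau_1(x) y v^+_\lambda \in V(\lambda)_\pm$ contributes, and that component is a scalar multiple of $v^+_\lambda$ determined intrinsically by $\tau_1(x)y$ acting on $V(\lambda)_\pm$. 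Consequently such a form is unique.

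For existence, consider the graded dual $V^\circ := \bigoplus_\mu (V(\lambda)_\pm^\mu)^*$ equipped with the twisted $\UU$-action $(u \cdot \phi)(v) := \phi(\tau_1(u) v)$, which is a well-defined action because $\tau_1$ is an anti-automorphism with $\tau_1^2 = 1$. Let $\phi_\lambda \in V^\circ$ be dual to $v^+_\lambda$. Using the formulas for $\tau_1$ one checks that $\phi_\lambda$ is a highest weight vector of weight $\lambda$: the toral identities are immediate from $\tau_1(K_\mu)=K_\mu$, $\tau_1(J_\mu)=J_\mu$, while $(E_i \cdot \phi_\lambda)(v) = q_i^{-1} \phi_\lambda(\tK_i^{-1} F_i v) = 0$ since $V(\lambda)_\pm^{\lambda + \alpha_i} = 0$. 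Moreover, $\tau_1$ exchanges $E_i$ and $F_i$ up to toral factors, so local nilpotency on $V(\lambda)_\pm$ translates to local nilpotency on $V^\circ$, placing $V^\circ$ in $\catOint$. By the semisimplicity of $\catOint$ and the uniqueness of simple highest weight modules (Proposition~\ref{prop:integrable}), the $\UU$-submodule of $V^\circ$ generated by $\phi_\lambda$ is isomorphic to $V(\lambda)_\pm$, producing a $\UU$-homomorphism $\Psi: V(\lambda)_\pm \to V^\circ$ with $\Psi(v^+_\lambda) = \phi_\lambda$. Setting $(v, w) := \Psi(v)(w)$ yields the desired form, with $(v^+_\lambda, v^+_\lambda) = \phi_\lambda(v^+_\lambda) = 1$ and contravariance built in by construction.

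Symmetry then follows from uniqueness: defining $(v, w)' := (w, v)$ and using $\tau_1^2 = 1$, one verifies that $(\tau_1(u) w, v) = (w, uv)$, so $(\cdot,\cdot)'$ satisfies the same defining conditions as $(\cdot,\cdot)$, forcing $(\cdot,\cdot)' = (\cdot,\cdot)$. I expect the main obstacle to be the careful bookkeeping of super/$\pi$-signs in the twisted-dual construction — in particular, confirming that the prescription $(u \cdot \phi)(v) := \phi(\tau_1(u) v)$ genuinely defines a $\UU$-action given the super multiplication rule \eqref{eqn:STM}, and verifying that all highest weight relations for $\phi_\lambda$ (including those involving $\tJ_i$) hold on the nose. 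These are precisely the points where the super generalization genuinely departs from Kashiwara's original argument, though in the anisotropic setting the absence of isotropic odd simple roots should keep the signs under control.
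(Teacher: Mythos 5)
Your proof is correct. The paper in fact records this proposition without proof, treating it as the standard contravariant-form (polarization) construction going back to Kashiwara, and your argument is exactly that standard route, parallel in spirit to the proof of Proposition~\ref{prop:Ubilprod}: uniqueness from weight orthogonality (forced by $\tau_1(K_\mu)=K_\mu$, $\tau_1(J_\mu)=J_\mu$) together with contravariance and $V(\lambda)_\pm=\UU^-v^+_\lambda$; existence by realizing $V(\lambda)_\pm$ inside the $\tau_1$-twisted graded dual; symmetry from uniqueness and $\tau_1^2=1$. Two points you flag as potential trouble are in fact harmless. First, there is no super-sign bookkeeping in the twisted dual: $\tau_1$ is an \emph{ordinary} anti-automorphism ($\tau_1(xy)=\tau_1(y)\tau_1(x)$ with no sign), and $\UU$-modules in this paper are modules over the underlying associative algebra with a compatible $\Z_2$-grading, so $(u\cdot\phi)(v):=\phi(\tau_1(u)v)$ is an honest action; moreover $\phi_\lambda$ has the correct eigenvalues $K_\mu\cdot\phi_\lambda=q^{\ang{\mu,\lambda}}\phi_\lambda$ and $J_\mu\cdot\phi_\lambda=\pi^{\ang{\mu,\lambda}}\phi_\lambda$ (with $\pi=\pm1$), matching the data of $V(\lambda)_\pm$, and the intertwiner $\Psi$ can be taken even, so the paper's ``supercommute'' convention for homomorphisms introduces no sign either. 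Second, before invoking semisimplicity and Proposition~\ref{prop:integrable} you do need the cyclic submodule $\UU\phi_\lambda$ to lie in $\catOint$; this is the one verification worth spelling out, and it goes through because the weights of $V(\lambda)_\pm$ are bounded above by $\lambda$: for $v$ of weight $\lambda-N\alpha_i$ with $N>\ang{\alpha_i^\vee,\lambda}$ one has $E_i^{N}v=0$ (otherwise its $\alpha_i$-string would rise above $\lambda$), and since $\tau_1(F_i)^N$ is a nonzero multiple of $\tK_i^{N}E_i^{N}$ this gives $F_i^{N}\cdot\phi_\lambda=0$; hence $\UU\phi_\lambda$ is an integrable highest weight module, necessarily a copy of $V(\lambda)_\pm$, and $\dim(V^\circ)^\lambda=1$ pins down the normalization $\Psi(v^+_\lambda)=\phi_\lambda$.
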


Recall the $\A$-lattices $\cL(\la)_\pm$ of $V(\la)_\pm$ from Example~\ref{ex:CBla}.
We define the dual lattices in $V(\la)_\pm$ to be
\[\cL(\lambda)_\pm^\vee=\set{v\in V(\lambda)_\pm \mid (v,\cL(\lambda)_\pm)\subseteq \A}.\]

For a weight $\UU$-module $M$, we call a bilinear form $(\cdot,\cdot)$
on $M$ a {\em polarization} if (\ref{eq:polar}) is satisfied with $M$ in place of $V(\la)_\pm$. Note that if $m\in M^\lambda$
and $m'\in M^\mu$, then
\begin{equation}\label{eqn:polweight}
(m,m')=0\text{ unless }\lambda= \mu\text{ and }p(m)= p(m').
\end{equation}

Recall \cite[Lemma~2.5.1]{Ka} that  the tensor product of modules admitting polarizations
also admits a natural polarization given by the tensor of the bilinear forms. In our super setting, this is not quite true due to the additional asymmetry
in the definition of the coproduct (as noted in Remark \ref{rem:coproducts}).
Recall $\catO^{\rm int}_\pm$ from \S \ref{subsec:cat}.

\begin{lem} \label{lem:pseudopol}
Assume  that either  $(1)$ $M, N\in \catOint$ are free $\Qqp$-modules,
or $(2)$   $M, N\in \catO^{\rm int}_+$, or $(3)$   $M, N\in \catO^{\rm int}_-$.
Assume $M$ and $N$ admit  polarizations $(\cdot,\cdot)$.
Then the symmetric bilinear form on the module $M\otimes N$ given by
$(m_1\otimes n_1, m_2\otimes n_2) =(m_1,m_2)(n_1,n_2)$ satisfies
$$
\big(\Delta(u)(m_1\otimes m_2), n_1 \otimes n_2 \big)=\big(m_1\otimes m_2,\Delta'(\tau_1(u))(n_1 \otimes n_2)\big),
$$
for $u\in \UU, m_1, m_2 \in M, n_1, n_2 \in N.$
\end{lem}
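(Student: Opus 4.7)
The strategy is to verify the identity on the generators $E_i, F_i, K_\mu, J_\mu$ of $\UU$ and then extend by multiplicativity. Since $\tau_1$ is an anti-automorphism and both $\Delta, \Delta'$ are superalgebra homomorphisms into $\UU\otimes\UU$ with the twisted multiplication \eqref{eqn:STM}, if the identity holds for $u$ and for $u'$, then
\begin{align*}
\big(\Delta(uu')(m_1\otimes m_2), n_1\otimes n_2\big)
&= \big(\Delta(u')(m_1\otimes m_2), \Delta'(\tau_1(u))(n_1\otimes n_2)\big) \\
&= \big(m_1\otimes m_2, \Delta'(\tau_1(u'))\Delta'(\tau_1(u))(n_1\otimes n_2)\big) \\
&= \big(m_1\otimes m_2, \Delta'(\tau_1(uu'))(n_1\otimes n_2)\big),
\end{align*}
so the identity propagates. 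Hence it is enough to treat each generator separately.

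The group-like cases $u=K_\mu, J_\mu$ are immediate: $\Delta$ and $\Delta'$ coincide on them, $\tau_1$ fixes them, and they act diagonally on weight vectors, so the statement reduces to the polarization axiom on $M$ and $N$ separately. For $u=E_i$, we expand both sides using \eqref{eqn:STM} together with the explicit formulas for $\Delta(E_i)$ and $\Delta'(\tau_1(E_i)) = q_i^{-1}(\tK_i^{-1}\otimes \tK_i^{-1})\Delta'(F_i)$. Each side breaks into two contributions, one proportional to $(m_1,F_i n_1)(m_2,n_2)$ and one proportional to $(m_1,n_1)(m_2,F_i n_2)$, with scalar coefficients obtained from the $\tK_i^{\pm 1}$ eigenvalues on weight vectors. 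A direct comparison of the resulting powers of $q_i$ shows that corresponding coefficients match. The case $u=F_i$ is entirely analogous, using $\tau_1(F_i)=q_i^{-1}\tK_i E_i$ and $\Delta'(F_i)=F_i\otimes 1+\tJ_i\tK_i\otimes F_i$; this is precisely where the twist $\tJ_i\tK_i$ in $\Delta'(F_i)$ is needed to compensate for the twist $\tJ_i$ in $\Delta(E_i)$.

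The main obstacle is the bookkeeping of the $\pi$-signs coming from the twisted multiplication in $\UU\otimes\UU$. For instance, in the $E_i$ computation the mixed term on the left picks up a factor $\pi^{p(i)p(m_1)}$ from $(\tJ_i\otimes E_i)(m_1\otimes m_2)$, whereas the corresponding term on the right picks up $\pi^{p(i)p(n_1)}$ from $(\tJ_i\otimes \tK_i^{-1}F_i)(n_1\otimes n_2)$. These two factors agree because, by \eqref{eqn:polweight}, the polarizations on $M$ and on $N$ vanish on pairs of vectors of distinct weight or distinct parity; in particular, whenever the mixed term is nonzero one has $p(m_1)=p(n_1)$. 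The identical mechanism handles the signs in the $F_i$ case. Finally, symmetry of the bilinear form on $M\otimes N$ is inherited from the symmetry on each factor, and the three cases (1)--(3) are all treated uniformly, since the argument only invokes the polarization axiom \eqref{eq:polar} and the vanishing property \eqref{eqn:polweight}.
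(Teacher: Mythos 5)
Your proof is correct and follows essentially the same route as the paper: the paper likewise verifies the adjunction generator-by-generator (writing out only the $F_i$ case) by expanding $\Delta(u)$ and $\Delta'(\tau_1(u))$ on weight vectors and invoking \eqref{eqn:polweight} together with Proposition~\ref{prop:pol} to match the $q_i$- and $\pi_i$-coefficients, the extension to products of generators being routine exactly as you indicate. One small bookkeeping slip: for $u=F_i$ the right-hand side involves $\Delta'(\tau_1(F_i))=q_i^{-1}(\tK_i\otimes\tK_i)\Delta'(E_i)$, so no $J$'s occur in that case at all, while the compensation of the $\tJ_i$ in $\Delta(E_i)$ by the $\tJ_i\tK_i$ term of $\Delta'(F_i)$ is the mechanism of the $u=E_i$ case — but the computation you describe is the correct one in each case, so this does not affect the argument.
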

We call such a bilinear form on $M\otimes N$ a {\em $J$-polarization}, as the difference on $\Delta$ and $\Delta'$
is caused by the $J_\mu$'s.

\begin{proof}
Let $m_1\in M_\mu$ and $m_2\in M_{\mu'}$.
By a direct computation, we have
\begin{align}  \label{eq:DF}
\big(\Delta(F_i) (m_1\otimes n_1), m_2\otimes n_2 \big)   &
=(F_i m_1,m_2)(n_1, n_2)+\pi_i^{p(m_1)}q^{\ang{\alpha_i^\vee,\mu}}(m_1,m_2)(F_i n_1, n_2),
\end{align}
and
\begin{align}  \label{eq:DF'}
\big(m_1\otimes n_1&, \Delta'(q_i^{-1}K_iE_i) (m_2\otimes n_2) \big)  \notag \\
&=(m_1, q_i^{-1}K_iE_im_2)(n_1, n_2)
+\pi_i^{p(m_2)}q_i^{\ang{\alpha_i^\vee,\mu'}}(m_1,m_2)(n_1,q_i^{-1}K_iE_in_2).
\end{align}
By (\ref{eqn:polweight}) and Proposition~ \ref{prop:pol}, \eqref{eq:DF} and \eqref{eq:DF'} are equal.
\end{proof}

\begin{rmk}
To see why we need $\Delta'$ in Lemma~\ref{lem:pseudopol}, we compute using $\Delta$ in replace of $\Delta'$ that
\begin{align}
\label{eq:DE}
\big(m_1\otimes n_1&, \Delta(\tau_1(E_i)) (m_2\otimes n_2) \big)  \notag \\
&=(m_1, \tau_1(E_i)m_2)(n_1, n_2)
+\pi_i^{p(m_2)}(\pi_iq_i)^{\ang{\alpha_i^\vee,\mu'}}(m_1,m_2)(n_1,\tau_1(E_i)n_2).
\end{align}
In particular, if $\parity i=1$ and $\ang{\alpha_i^\vee,\mu'}\not\in 2\Z$,
then \eqref{eq:DE} is not equal to \eqref{eq:DF}.
\end{rmk}

For $\lambda, \mu\in P^+$ and $\diamond \in \{+,-\}$, we define the even $\UU$-module homomorphisms
\begin{align}
\begin{split}
 \Phi(\lambda,\mu):V(\lambda+\mu)_\diamond \rightarrow V(\lambda)_\diamond\otimes V(\mu)_\diamond,
&\qquad  v^+_{\lambda+\mu}\mapsto v^+_\lambda\otimes v^+_\mu, \\
 \Phi'(\lambda,\mu):V(\lambda+\mu)_\diamond\rightarrow V(\lambda)_\diamond\otimes' V(\mu)_\diamond,
&\qquad v^+_{\lambda+\mu}\mapsto v^+_\lambda\otimes v^+_\mu, \\
 \Psi(\lambda,\mu):V(\lambda)_\diamond\otimes V(\mu)_\diamond\rightarrow V(\lambda+\mu)_\diamond,
&\qquad v^+_\lambda\otimes v^+_\mu\mapsto v^+_{\lambda+\mu}, \\
 \Psi'(\lambda,\mu):V(\lambda)_\diamond\otimes' V(\mu)_\diamond\rightarrow V(\lambda+\mu)_\diamond,
&\qquad v^+_\lambda\otimes v^+_\mu\mapsto v^+_{\lambda+\mu}.
\label{eqn:tensorembeddings}
\end{split}
\end{align}

Then $\Psi(\lambda,\mu)\circ\Phi(\lambda,\mu)$ and $\Psi'(\lambda,\mu)\circ\Phi'(\lambda,\mu)$
are the identity map on $V(\lambda+\mu)_\diamond$. In particular,
$V(\lambda)_\diamond\otimes V(\mu)_\diamond=\im\Phi\oplus\ker\Psi$ and
$V(\lambda)_\diamond\otimes' V(\mu)_\diamond=\im\Phi'\oplus\ker\Psi'$.
Note that these maps, being even $\UU$-module homomorphisms, commute with
Kashiwara operators.

\begin{cor}
Let $\lambda, \mu\in P^+$ and $\diamond \in \{+,-\}$.
Let $(\cdot, \cdot)$ denote the polarization on $V(\lambda+\mu)_\diamond$
and the $J$-polarization on $V(\lambda)_\diamond\otimes V(\mu)_\diamond$.
Then we have
$$
(\Psi(\lambda,\mu)(w),v)=(w,\Phi'(\lambda,\mu)(v)), \qquad
(\Psi'(\lambda,\mu)(w),v)=(w,\Phi(\lambda,\mu)(v)),
$$
for $v\in V(\lambda+\mu)_\diamond$ and $w\in V(\lambda)_\diamond \otimes V(\mu)_\diamond$.
\end{cor}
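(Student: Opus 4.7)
It suffices to prove the first identity; the second follows by the same argument with the roles of the coproducts $\Delta,\Delta'$ (and hence of $(\Psi,\Phi')$ versus $(\Psi',\Phi)$) interchanged. Define the ``obstruction'' bilinear form
$$B(w,v) := \big(\Psi(\la,\mu)(w),\,v\big) - \big(w,\,\Phi'(\la,\mu)(v)\big)$$
on $(V(\la)_\diamond \otimes V(\mu)_\diamond) \times V(\la+\mu)_\diamond$, and the goal is to show that $B \equiv 0$.

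The first step is to establish the equivariance relation
$$B(w,\, u v) \;=\; B\big(\Delta(\tau_1(u))\,w,\; v\big), \qquad u \in \UU.$$
For the first summand of $B(w,uv)$, use symmetry of the polarization on $V(\la+\mu)_\diamond$ together with the defining property \eqref{eq:polar} to write $(\Psi(w), uv) = (v,\tau_1(u)\Psi(w))$, and then push $\tau_1(u)$ through $\Psi$, which is a $\UU$-module map with respect to $\Delta$. For the second summand, use that $\Phi'$ is a $\UU$-module map with respect to $\Delta'$ to get $(w,\Phi'(uv)) = (w,\Delta'(u)\Phi'(v))$; then substitute $u \mapsto \tau_1(u)$ in Lemma~\ref{lem:pseudopol} and invoke the symmetry of the $J$-polarization on $V(\la)_\diamond\otimes V(\mu)_\diamond$ to rewrite this as $(\Delta(\tau_1(u))w,\Phi'(v))$. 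Subtracting yields the claimed equivariance.

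Since $V(\la+\mu)_\diamond$ is generated as a $\UU^-$-module by $v^+_{\la+\mu}$, any $v$ can be written as $u\cdot v^+_{\la+\mu}$ for some $u \in \UU^-$, so the equivariance relation reduces everything to proving $B(w, v^+_{\la+\mu}) = 0$ for all $w$. The weight-and-parity vanishing \eqref{eqn:polweight} applied to both summands shows that it is enough to treat $w$ of weight $\la+\mu$ and even parity. The $(\la+\mu)$-weight space of $V(\la)_\diamond \otimes V(\mu)_\diamond$ is one-dimensional, spanned by $v^+_\la \otimes v^+_\mu$, since any weight of $V(\la)_\diamond$ is $\le\la$ and similarly for $\mu$. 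A direct evaluation then gives
$$B(v^+_\la \otimes v^+_\mu,\; v^+_{\la+\mu}) \;=\; (v^+_{\la+\mu}, v^+_{\la+\mu}) - (v^+_\la,v^+_\la)(v^+_\mu,v^+_\mu) \;=\; 1 - 1 \;=\; 0.$$

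The only place super signs can create trouble is the equivariance step, where the asymmetric appearance of $\tJ_i$'s in $\Delta$ and $\Delta'$ must be tracked carefully; but Lemma~\ref{lem:pseudopol} was designed precisely to absorb this discrepancy, so no new difficulties arise.
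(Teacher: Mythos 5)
Your argument is correct and is exactly the intended derivation the paper leaves implicit: use that $\Psi$ and $\Phi'$ are even module homomorphisms for $\Delta$ and $\Delta'$ respectively, transfer the action through Lemma~\ref{lem:pseudopol} (with $u\mapsto\tau_1(u)$) and the polarization property \eqref{eq:polar}, and reduce via the highest weight space, where both pairings equal $1$. No gaps; the symmetry of the $J$-polarization, which you invoke, is indeed what makes the interchanged argument for the second identity go through.
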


\subsection{Odd rank 1 calculation}
\label{subsec:rank1}

Let $n \in \Z_{\ge 0}$ and $I=\set{i}$ with $p(i)=1$.
We consider the module $V(n) \otimes V(1)$.
This module has two submodules over $\Qqp$ generated by singular vectors:
a submodule $N_1$ generated by the (even) singular vector
\[w=v^+_n \otimes v^+_1\]
and $N_2$ generated by the (odd) singular vector
\[z=v^+_n\otimes Fv^+_1- \pi^n q [n]^{-1} Fv^+_n\otimes v^+_1.\]
We directly compute
\begin{align*}
F^{(k)}w&=F^{(k)}v^+_n\otimes v^+_1 + \pi^n(\pi q)^{n+1-k}F^{(k-1)}v^+_n\otimes Fv^+_1,
 \\
F^{(k)}z &=(1- \pi(\pi q)^{n-k}[n]^{-1}[k]) F^{(k)} v^+_n\otimes Fv^+_1
-\pi^n q [n]^{-1}[k+1]F^{(k+1)}v^+_n\otimes v^+_1.
\end{align*}

Observing that $\pi(\pi q)^{n-k}[n]^{-1}[k]\in q^{2n-2k}\A^\pi$, we have
\begin{equation}
F^{(k)}w=
\begin{cases}
F^{(k)}v^+_n\otimes v^+_1 & \text{ if } 0\leq k<n+1\\
\pi^n F^{(n)}v^+_n\otimes Fv^+_1 & \text{ if } k=n+1
\end{cases}\quad \mathrm{ mod }\ q\cL,
\end{equation}
and
\begin{equation}
F^{(k)}z=F^{(k)} v^+_n\otimes Fv^+_1\quad \mathrm{ mod }\ q\cL,\quad 0\leq k \leq n-1.
\end{equation}

In particular, $V(n)\otimes V(1) \cong N_1\oplus N_2\cong V(n+1)\oplus V(n-1)$.

The above calculations remain to make perfect sense for $V(n)_\pm\otimes V(1)_\pm$, with $\pi$ in
the formulas above interpreted as $1$ and $-1$ accordingly.
In particular,
$$V(n)_\pm\otimes V(1)_\pm\cong N_1\oplus N_2\cong V(n+1)_\pm\oplus V(n-1)_\pm.
$$

\subsection{Tensor product rule for crystal bases}

We can use the calculations in \S \ref{subsec:rank1}  to prove a tensor product rule for crystal bases in general.
Let $M$ be an integrable $\UU$-module with crystal basis $(\cL,B)$.
For each $i\in I$ and $b\in B$, define
\begin{align}  \label{dfn:stringindex}
\begin{split}
\varphi_i(b) &=\max\set{n \mid \tf_i^nb\neq 0},
  \\
\varepsilon_i(b) &=\max\set{n \mid \te_i^nb\neq 0}.
\end{split}
\end{align}
We note that $\varphi_i(b)=\ang{\alpha_i^\vee, \mu}+\varepsilon_i(b)$ for $b \in B_\mu$.

\begin{thm}\label{thm:tensorproductrule}
Let $M,M'\in\catO^{\rm int}_-$ (reps. $M,M'\in\catO^{\rm int}_+$) be modules with crystal bases
$(\cL,B)$ and $(\cL',B')$.
Let $B\otimes B'=\set{b\otimes b'\in (\cL/q\cL)\otimes_{\Q} (\cL'/q\cL'):b\in B, b'\in B'}$.
The tensor product $M\otimes M'$ has a crystal basis $(\cL\otimes \cL', B\otimes B')$
subject to the rules:
\[\tf_i(b\otimes b')=\begin{cases}
\tf_i b\otimes b'
&\text{ if } \varphi_i(b)>\varepsilon_i(b'), \\
\pi_i^{p(b)} b\otimes \tf_i b' &\text{ otherwise};\end{cases}\]
\[\te_i(b\otimes b')=\begin{cases}
\pi_i^{p(b)} b\otimes\te_i b'
&\text{ if } \varphi_i(b)<\varepsilon_i(b'), \\
\te_i b\otimes  b' &\text{ otherwise.} \end{cases}\]
(All equalities are understood in $\cL\otimes \cL'/q\cL\otimes \cL'$.)
\end{thm}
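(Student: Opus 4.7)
The plan is to follow Kashiwara's strategy, adapting it to the super setting and carefully tracking the additional $\pi$-factors. Since the Kashiwara operators $\tilde{e}_i, \tilde{f}_i$ and the defining axioms of a crystal basis at node $i$ involve only the subalgebra $\UU_i$ generated by $E_i, F_i, \tK_i^{\pm 1}, \tJ_i^{\pm 1}$, we reduce to the rank one situation by viewing $M$ and $M'$ as $\UU_i$-modules. When $p(i) = 0$, then $\pi_i = 1$ and the claimed rule reduces to Kashiwara's classical statement from \cite{Ka}; the substantive content lies in the case $p(i) = 1$.

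For $p(i) = 1$, the semisimplicity afforded by Proposition~\ref{prop:integrable} further reduces us to proving the rule when $M = V(n)$ and $M' = V(m)$, since Kashiwara operators and the tensor product rule are both compatible with direct sums and with even $\UU$-module homomorphisms. The base case $m = 1$ is the explicit computation of \S\ref{subsec:rank1}: the reductions modulo $q\cL$ of $F^{(k)} w$ and $F^{(k)} z$ (where $w, z$ are the highest weight vectors of the two summands in $V(n) \otimes V(1) \cong V(n+1) \oplus V(n-1)$) yield exactly the action prescribed by the theorem, with the super sign $\pi^n = \pi_i^{p(F^{(n)} v_n^+)}$ appearing precisely when the new $F$ is placed on the right-hand factor, in agreement with the stated rule.

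For $m > 1$, the strategy is to use the embedding $V(m) \hookrightarrow V(m-1) \otimes V(1)$ given by $\Phi(m-1,1)$ from \eqref{eqn:tensorembeddings}. Because $\Phi$ is an even $\UU$-homomorphism commuting with Kashiwara operators, this embeds the problem for $V(n) \otimes V(m)$ into the already-treated problem for $V(n) \otimes V(m-1) \otimes V(1)$ via the associativity of tensor products. An induction on $m$, combined with the base case, yields the tensor product rule on $V(n) \otimes V(m)$. The verification of axioms (1)--(7) of Definition~\ref{dfn:crb} for $(\cL \otimes \cL', B \otimes B')$ then follows, the $\pi$-basis property of axiom (4) being preserved because $\pi_i^{p(b)} \in \{1, \pi\}$.

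The main obstacle is tracking the super sign through the iterated coproduct. Using \eqref{eqn:deltadpf} together with the tensor-product-algebra action convention \eqref{eqn:STM}, one has
\[\Delta(F_i^{(p)})(m \otimes m') = \sum_{p'+p''=p} (\pi_i q_i)^{-p'p''}\, \pi_i^{p'' \cdot p(m)}\, (F_i^{(p')} \tK_i^{p''} m) \otimes F_i^{(p'')} m',\]
and when we re-expand $m \otimes m' = \sum_r F_i^{(r)} w_r$ with $w_r \in \ker E_i$ inside the tensor module, the dominant term at $q = 0$ depends on the sign of $\varphi_i(b) - \varepsilon_i(b')$. When $\varphi_i(b) > \varepsilon_i(b')$ all newly added $F_i$'s accumulate on the left (no super sign), while when $\varphi_i(b) \leq \varepsilon_i(b')$ an $F_i$ slips past $b$ to the right, producing the super sign $\pi_i^{p(b)}$ from the supercommutation encoded in \eqref{eqn:STM}. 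The bar-consistency condition (f) of \S\ref{subsec:SGCM} is used implicitly throughout to guarantee that these $\pi$-powers are well-defined and compatible with the $q$-powers introduced by $\tK_i$ acting on weight vectors.
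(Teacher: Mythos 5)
Your proposal is correct and follows essentially the same route as the paper: reduce to rank one at a fixed $i$ (the case $p(i)=0$ and the $+$ specialization being Kashiwara's), use complete reducibility to reduce to $V(n)_-\otimes V(m)_-$, take the odd rank-one computation of \S\ref{subsec:rank1} as the base case $m=1$, and induct on $m$ by tensoring with $V(1)_-$ (your phrasing via the embedding $\Phi(m-1,1):V(m)\hookrightarrow V(m-1)\otimes V(1)$ is the same mechanism as the paper's splitting $V(m)\otimes V(1)\cong V(m+1)\oplus V(m-1)$). Your closing coproduct computation with the sign $\pi_i^{p''\,p(m)}$ from \eqref{eqn:STM} is consistent with, though not needed beyond, this inductive argument.
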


\begin{proof}
It is sufficient to prove this for a fixed $i$, in which case
the theorem reduces to a statement for $I=\set{i}$. When $p(i)=0$,
the theorem is \cite[Theorem 1]{Ka}. Assume $p(i)=1$.
The case for $M,M'\in\catO^{\rm int}_+$ is again reduced to Kashiwara's original setting,
so we assume $M,M'\in\catO^{\rm int}_-$.
Since modules are completely reducible \cite{CW}, it suffices to prove this for tensor products of
 simple modules  $V(n)_-\otimes V(m)_-$, by induction on $m$.
 Recall from Example~\ref{ex:rank1} that $(\cL(n)_\pm,B(n)_\pm)$ is a crystal basis for $V(n)_\pm$.

From the odd rank 1 calculation in \S \ref{subsec:rank1}, the theorem holds for $V(n)_-\otimes V(1)_-$. This takes care the base case of induction.

By induction, we assume the theorem holds for $V(n)_-\otimes V(m)_-$. Note that
\[
V(n)_-\otimes V(m)_-\otimes V(1)_-\cong V(n)_-\otimes (V(m+1)_-\oplus V(m-1)_-).
\]
By the complete reducibility and the base case proved above, we conclude that
$\big(\cL(n)_-\otimes \cL(m)_-\otimes \cL(1)_-, B(n)_-\otimes B(m)_-\otimes B(1)_-\big)$
is a crystal basis of $V(n)_-\otimes V(m)_-\otimes V(1)_-$.
Moreover, $\big(\cL(n)_-\otimes \cL(m)_-\otimes \cL(1)_-, B(n)_-\otimes B(m)_-\otimes B(1)_-\big)$
decomposes as $\big(\cL(n)_-\otimes (\cL(m+1)_-\oplus \cL(m-1)_-), B(n)_-\otimes (B(m+1)_-\cup B(m-1)_-) \big)$.
Therefore, $(\cL(n)_-\otimes \cL(m+1)_-, B(n)_-\otimes B(m+1)_-)$
is a crystal basis of $V(n)_-\otimes V(m+1)_-$.
\end{proof}

\begin{rmk}
Jeong \cite{Jeo} claimed  a version of tensor product rule
for $\osp(1|2)$ without super signs. The proof was much more complicated in the setting of \cite{Jeo},
as  $V(n)$ for $n$ odd (and in particular $V(1)$) were not available.
The formulation and the calculations there were incorrect since he missed
the super signs in the quantum integers and the super signs arising from the multiplication in the tensor algebra;
see (\ref{eqn:STM}). For example,
the correct calculation of \cite[Eq.~ (4.2)]{Jeo} should read (it is understood that $\pi=-1$ below)
\[a_m=\frac{1}{[m]}\sum_{k=0}^{m-1} (\pi q^{-2})^kq^n= \pi^{m-1}q^{n+1-m}.\]
In particular, $\tf^{n+1}(v_n^+\otimes v_2^+)\equiv
\pi^n\tf^nv_n^+\otimes \tf_i v_1^+ \text{ mod }qA$.

On the other hand, there are no signs in the tensor product
rule presented in \cite[Theorem 4.1]{MZ}, since the signs
are absorbed into the various factors of $\sqrt{-1}$ therein.
\end{rmk}

\subsection{Main results on crystal bases}
 \label{sec:thmcrystal}

We are now ready to formulate the main theorems on crystal bases for $\UU^-$ and integrable modules $V(\la)$.

\begin{thm}  \label{th:CrBinf}
For $\diamond \in \{\emptyset, +,-\}$, $(\cL(\infty)_\diamond, B(\infty)_\diamond)$ is a crystal basis of $\UU^-_\diamond$.
\end{thm}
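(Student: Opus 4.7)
The plan is to follow Kashiwara's grand loop argument, adapted to the super setting. Since $\UU^- \cong \UU^-_+ \oplus \UU^-_-$ as $\bB$-modules and $\cL(\infty)$, $B(\infty)$ decompose accordingly, it suffices to establish the result for $\diamond \in \{+,-\}$. The case $\diamond = +$ reduces to Kashiwara's original theorem, since $\bB$ acts on $\UU^-_+$ through the specialization $\bB/\langle \pi-1 \rangle$, so the substantive work lies in the case $\diamond = -$.

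First I would set up a simultaneous induction on weight height, combining three families of statements indexed by $n \in \N$: (A$_n$) the crystal basis axioms of Definition~\ref{dfn:crb} hold for $V(\lambda)_-$ on weights of height at most $n$, for every $\lambda \in P^+$; (B$_n$) the corresponding axioms hold for $(\cL(\infty)_-, B(\infty)_-)$ on weights of height at most $n$; and (C$_n$) the compatibility statement that the natural surjection $\UU^-_- \twoheadrightarrow V(\lambda)_-$, $u \mapsto u\cdot v^+_\lambda$, carries $\cL(\infty)_-$ into $\cL(\lambda)_-$ and intertwines the Kashiwara operators on $B(\infty)_-$ with those on $B(\lambda)_- \cup \{0\}$ up to height $n$. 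These three statements are proved together, with (A$_n$) and (C$_n$) feeding into (B$_{n+1}$) and (C$_{n+1}$), which in turn refine (A$_{n+1}$).

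The inductive engine combines the tensor product rule (Theorem~\ref{thm:tensorproductrule}) with the polarization. Given $u \in \cL(\infty)_-$ of weight of height $n+1$ and $i \in I$, one writes $u = \sum_t f_i^{(t)} u_t$ with $u_t \in \ker E_i'$ using the unique-decomposition lemma for $\bB$-modules in $\curlyP$, applies the inductive hypotheses to the lower-height components $u_t$, and deduces the required behaviour of $\te_i u$ and $\tf_i u$ modulo $q\cL(\infty)_-$. To transfer information between $\UU^-_-$ and $V(\lambda)_-$, one chooses $\lambda$ sufficiently dominant so that the projection is injective at the relevant weight; the crystal basis axioms for $V(\lambda + \mu)_-$ are then deduced from those of $V(\lambda)_- \otimes V(\mu)_-$ via the embedding $\Phi(\lambda,\mu)$ of \eqref{eqn:tensorembeddings} together with the tensor product rule. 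The nondegeneracy of the polarization on $\UU^-_-$ and the $J$-polarization on tensor products (Lemma~\ref{lem:pseudopol}) are needed to verify axiom (5) of Definition~\ref{dfn:crb} and to control when an element lies in $q\cL$.

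The main obstacle, I expect, is the super-sign bookkeeping. Unlike Kashiwara's original setting, the polarization at $q = 0$ is not positive definite, so we lose the clean orthonormality characterization of $\cL(\infty)$ that Kashiwara exploited; this forces us to propagate the $\pi$-basis formalism throughout the induction, with basis vectors determined only up to a factor of $\pi$. A related subtlety is that $\Delta$ and $\Delta'$ diverge in the super case, so Lemma~\ref{lem:pseudopol} must be invoked with the correct coproduct on each side, and signs must be tracked through both the super multiplication \eqref{eqn:STM} on $\UU\otimes\UU$ and through the tensor product rule. The odd rank-one calculation of \S\ref{subsec:rank1}, where the genuinely super module $V(1)_-$ plays a basic role, is the indispensable base case; without $V(1)_-$ one cannot propagate the $\pi$-signs consistently, which is precisely the gap in earlier treatments.
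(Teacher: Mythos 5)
Your proposal is correct and follows essentially the same route as the paper: a Kashiwara-style grand loop induction on weight height, run simultaneously for $\UU^-_-$ and all $V(\la)_-$ together with the compatibility via $\wp_\la$, powered by the tensor product rule (with the odd rank-one module $V(1)_-$ as base case), the polarization with $\la\gg 0$, and the $J$-polarization/$\Delta'$ bookkeeping, after which the $\diamond=+$ case is Kashiwara's and the $\diamond=\emptyset$ case is obtained from the two specializations. The paper organizes the induction as the fourteen statements $(C_l.1)$--$(C_l.14)$ rather than your three families, but this is only a difference in packaging.
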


\begin{thm}   \label{th:CrBla}
Let $\la \in P^+$. For $\diamond \in \{ +, -, \emptyset\}$,
$(\cL(\la)_\diamond, B(\la)_\diamond)$ is a crystal basis of $V(\la)_\diamond$.
\end{thm}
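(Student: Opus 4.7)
The plan is to prove Theorem \ref{th:CrBla} simultaneously with Theorem \ref{th:CrBinf} by the ``grand loop'' induction of Section~\ref{sec:grandloop}, following the scheme pioneered by Kashiwara \cite{Ka} but adapted to the $\pi$-graded setting. Since $V(\la)\cong V(\la)_+\oplus V(\la)_-$ by Remark~\ref{rmk:char=}, it suffices to treat $\diamond\in\{+,-\}$; the case $\diamond=\emptyset$ follows by taking the direct sum of lattices and of $\pi$-bases. The $\diamond=+$ case is essentially covered by Kashiwara, so the genuinely new work concerns $\diamond=-$. I will induct simultaneously on the height $\height(\la-\mu)$ of weight spaces (for the $V(\la)$ statement) and on $\height\nu$ (for the $\UU^-_{-\nu}$ statement in Theorem~\ref{th:CrBinf}), packaging into one inductive hypothesis the statements that $\cL(\la)_\diamond,\cL(\infty)_\diamond$ are stable under $\te_i,\tf_i$, that $B(\la)_\diamond,B(\infty)_\diamond$ form $\pi$-bases mod $q$, and that the Kashiwara operators are compatible with the natural surjection $\UU^-\twoheadrightarrow V(\la)$ sending $1\mapsto v_\la^+$.

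The inductive step proceeds via two crucial tools. First, for a given $\la\in P^+$ with $\la\neq 0$, write $\la=\la_1+\la_2$ with $\la_1,\la_2\in P^+$ of strictly smaller height (using the fundamental weights $\omega_j$ when $\la$ is itself fundamental one reduces further via $\UU^-\to V(\la)$). By Proposition~\ref{prop:integrable} and Remark~\ref{rmk:uniqueness} we have an even $\UU$-embedding $\Phi(\la_1,\la_2):V(\la)_\diamond\hookrightarrow V(\la_1)_\diamond\otimes V(\la_2)_\diamond$ which, being a $\UU$-homomorphism, supercommutes with $\te_i,\tf_i$. By the inductive hypothesis both factors admit the prescribed crystal bases, and Theorem~\ref{thm:tensorproductrule} furnishes a crystal basis on the tensor product. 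One then checks that $\Phi(\la_1,\la_2)\cL(\la)_\diamond\subseteq\cL(\la_1)_\diamond\otimes\cL(\la_2)_\diamond$ and that $\Phi(\la_1,\la_2)B(\la)_\diamond\subseteq B(\la_1)_\diamond\otimes B(\la_2)_\diamond$ modulo $q$; the reverse containment uses the $J$-polarization of Lemma~\ref{lem:pseudopol} together with the splitting $V(\la_1)\otimes V(\la_2)=\im\Phi\oplus\ker\Psi$, whose orthogonality (up to $\pi$) with respect to the $J$-polarization controls the projection onto $V(\la)$ at $q=0$. This yields axioms (1)--(3) of the crystal-lattice definition and axioms (4)--(7) of the crystal basis for $V(\la)_\diamond$, with $B(\la)_\diamond=\{\tf_{i_1}\cdots\tf_{i_t}v_\la^+\}\setminus\{0\}$ inheriting its $\pi$-basis property from the tensor-product rule. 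The second tool, running in parallel, is the corresponding statement on $\UU^-_\diamond$ obtained by taking $\la\to\infty$ in a suitable sense; the compatibility with the projection $\UU^-\to V(\la)$ sending $F_{i_1}\cdots F_{i_t}\cdot 1 \mapsto F_{i_1}\cdots F_{i_t}v_\la^+$ allows information to flow between the two theorems within the loop.

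The main obstacle is the bookkeeping of $\pi$-signs forced by Example~\ref{ex:oddodd}: when two odd indices $i,j$ satisfy $a_{ij}=a_{ji}=0$, one has $\tf_i\tf_j=\pi\tf_j\tf_i$, so $B(\la)_-$ is only a $\pi$-basis rather than a genuine basis of $\cL(\la)_-/q\cL(\la)_-$, and $\pi B(\la)_-\cap B(\la)_-$ may be nonempty. This is precisely why Definition~\ref{dfn:pibasis} is needed, and why the tensor product rule of Theorem~\ref{thm:tensorproductrule} carries an explicit $\pi_i^{p(b)}$ factor. Concretely, in each induction step one must verify that when $\tf_{i_1}\cdots\tf_{i_t}v_\la^+\equiv \pi\, \tf_{j_1}\cdots\tf_{j_t}v_\la^+\pmod{q\cL(\la)_-}$ occurs, no contradiction arises with the prescribed action of $\te_k,\tf_k$; this reduces via $\Phi(\la_1,\la_2)$ to the analogous statement on the tensor product, where it follows from a direct inspection of the sign conventions in~\eqref{eqn:STM} and Theorem~\ref{thm:tensorproductrule}. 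A secondary but routine obstacle is the base of the loop for odd fundamental-weight cases, where one must recognize $V(\omega_j)_-$ for $j\in\od I$ explicitly; this is handled by the rank-one analysis of Example~\ref{ex:rank1} and \S\ref{subsec:rank1} together with the $\osp(1|2)$ representation theory of \cite{CW}.
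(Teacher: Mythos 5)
Your outline is the paper's (adapt Kashiwara's grand loop, with the odd rank-one module, the tensor product rule, and the $J$-polarization carrying the new $\pi$-signs), but the inductive mechanism you describe is not the one that closes, and the differences are genuine gaps rather than bookkeeping. The grand loop of Section~\ref{sec:grandloop} inducts only on the depth of $\zeta\in Q^-(l)$ and proves the statements $(C_l.1)$--$(C_l.14)$ for all $\la\in P^+$ and for $\UU^-$ simultaneously; there is no auxiliary induction on the size of $\la$, and fundamental weights are not a base case. Your step ``write $\la=\la_1+\la_2$ with both summands strictly smaller, use the inductive hypothesis that the factors already carry crystal bases, and apply Theorem~\ref{thm:tensorproductrule}'' is not available mid-loop: at the current depth the crystal-basis property of the factors is exactly what is being proved, so one may only use the depth-truncated tensor statement (Lemma~\ref{Ka:4.3.2}, deduced from Lemma~\ref{Ka:4.3.1} and the rank-one instance of the tensor rule). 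Worse, your treatment of fundamental $\la$ by ``reducing via $\UU^-\twoheadrightarrow V(\la)$'', with $V(\omega_j)_-$ ``recognized explicitly'' by the rank-one analysis of \S\ref{subsec:rank1}, is circular and does not work in rank $>1$: $V(\omega_j)$ is not a rank-one object (and is infinite dimensional outside finite type), while inside the loop the depth-$l$ statements for $\cL(\infty)$ and $B(\infty)$ are themselves extracted from the modules $V(\mu)$ with $\mu\gg0$ via $\wp_\mu$ (cf.\ $(C_l.3)$ and Lemma~\ref{Ka:4.7.2}); building those large weights out of fundamentals whose case rests on the $\UU^-$ statement runs in a circle.

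The second gap is in your ``reverse containment''. The inclusion $\Psi(\la,\mu)\bigl((\cL(\la)\otimes\cL(\mu))^{\la+\mu+\zeta}\bigr)\subseteq\cL(\la+\mu)$ is not obtained from an orthogonality of $\im\Phi$ and $\ker\Psi$ for two ``smaller'' factors; it is proved only for $\mu\gg0$ (Lemma~\ref{Ka:4.7.3}), using the dual lattice $\cL(\la+\mu)^{\la+\mu+\zeta,\vee}$, the second coproduct $\Delta'$ through $\Phi'(\la,\mu)$ (this is precisely where the $J$-polarization of Lemma~\ref{lem:pseudopol} is needed), and the asymptotic Lemmas~\ref{Ka:4.7.1}--\ref{Ka:4.7.2}, which force the second factor to be large, not small. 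What your sketch needs in place of the ``$\la_1+\la_2$ with both smaller'' device is: Corollary~\ref{Ka:4.3.5} (one factor a fundamental weight, the other arbitrary, to peel off the last Kashiwara operator and drop to depth $l-1$), Corollary~\ref{Ka:4.3.6} with Nakayama to generate the depth-$l$ lattice, and the $\mu\gg0$ polarization lemmas to transfer lattices between $V(\la)\otimes V(\mu)$, $V(\la+\mu)$, and then $\UU^-$. Your points that do match the paper --- the $\pi_i^{p(b)}$ signs in Theorem~\ref{thm:tensorproductrule}, the phenomenon of Example~\ref{ex:oddodd} forcing $\pi$-bases rather than bases, $\diamond=+$ being Kashiwara's case, and recovering $\diamond=\emptyset$ from $\diamond=\pm$ --- are correct, but as written the loop does not close.
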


For $\la \in P^+$ and $\diamond \in \{\emptyset, +, -\}$,  we define the (even) $\UU^-$-linear
projection map
\begin{equation}
\wp_\lambda:\UU^-_\diamond \longrightarrow V(\lambda)_\diamond,\quad 1\mapsto v^+_\lambda.
\end{equation}

\begin{thm}   \label{th:CrBinfla}
Let $\la \in P^+$. For $\diamond \in \{+, -,  \emptyset\}$,
\begin{enumerate}
\item
$\wp_\la (\cL(\infty)_\diamond) =\cL(\la)_\diamond$;

(This induces a homomorphism
$\overline{\wp}_\la: \cL(\infty)_\diamond/q \cL(\infty)_\diamond \rightarrow \cL(\la)_\diamond/q \cL(\la)_\diamond$.)

\item
$\overline{\wp}_\la$ sends $\{b \in B(\infty)_\diamond \mid \overline{\wp}_\la(b)\neq0\}$ isomorphically to $B(\la)_\diamond$;

\item
if $b\in B(\infty)_\diamond$ satisfies $\overline{\wp}_\la(b)\neq 0$,
then $\te_i \overline{\wp}_\la(b)=\overline{\wp}_\la(\te_i b).$
\end{enumerate}
\end{thm}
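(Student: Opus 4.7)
The plan is to prove this theorem as part of the grand loop induction of Section~\ref{sec:grandloop}, simultaneously with Theorems~\ref{th:CrBinf} and~\ref{th:CrBla}. The three statements must be established together, as the crystal bases on $\UU^-_\diamond$ and on $V(\la)_\diamond$, along with their compatibility under $\wp_\la$, all feed into one another at each step of the induction on the height of the weight.

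The core technical step is to establish that $\wp_\la$ commutes with the Kashiwara operators modulo $q$. Given $u \in \UU^-$, decompose it as $u = \sum_n F_i^{(n)} u_n$ with $E_i'(u_n)=0$. The commutator identity \eqref{eq:E'}, combined with $E_i v_\la^+ = 0$, yields
\[
E_i\,\wp_\la(u_n) \;=\; \frac{1}{\pi_i q_i - q_i^{-1}}\,\tJ_i \tK_i\, \wp_\la\bigl(E_i''(u_n)\bigr) \quad \text{in } V(\la)_\diamond,
\]
where $\tJ_i\tK_i$ acts on $\wp_\la(E_i''(u_n))$ by a scalar of the form $\pi_i^m q_i^m$ for an integer $m$ determined by $\la$ and the weight of $u_n$. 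A case analysis using the integrability of $V(\la)_\diamond$ (which forces $\wp_\la(E_i''(u_n))=0$ when the target weight falls outside $V(\la)_\diamond$) shows that, whenever the target is nonzero, the full coefficient lies in $q\A^\pi$. Combined with the inductive hypothesis $\wp_\la(E_i''(u_n))\in\cL(\la)_\diamond$, this gives $E_i\,\wp_\la(u_n)\in q\,\cL(\la)_\diamond$. Consequently the $E_i$-string decomposition of $\wp_\la(u)$ in $V(\la)_\diamond$ matches $\wp_\la$ applied to the $E_i'$-decomposition of $u$ modulo $q\cL(\la)_\diamond$, yielding the congruences $\wp_\la\,\tf_i \equiv \tf_i\,\wp_\la$ and $\wp_\la\,\te_i \equiv \te_i\,\wp_\la$ modulo $q\cL(\la)_\diamond$.

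Once the commutation is in place, Part~(1) follows: $\cL(\infty)_\diamond$ is generated by the elements $\tf_{i_1}\cdots\tf_{i_t}\cdot 1$ over $\A^\pi$ and $\cL(\la)_\diamond$ by $\tf_{i_1}\cdots\tf_{i_t} v_\la^+$, so the congruence implies $\wp_\la(\cL(\infty)_\diamond) + q\cL(\la)_\diamond = \cL(\la)_\diamond$, and locality of $\A^\pi$ at $q$ upgrades this to an equality. Part~(3) is precisely the mod-$q$ commutation with $\te_i$. For Part~(2), surjectivity of $\overline{\wp}_\la$ onto $B(\la)_\diamond$ is built into the definition, while injectivity on the nonzero fiber follows by inducting on the word length and applying Part~(3) together with the identity $\te_i\tf_i = \mathrm{id}$ on $B(\infty)_\diamond$ and the corresponding inverse property on $B(\la)_\diamond$. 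The principal obstacle is the interlocking nature of this grand loop induction, further complicated in the super setting by the need to track $\pi$-factors arising from the central element $\tJ_i$ throughout the scalar analysis above and the rank-one computations underlying Lemma~\ref{Ka:3.4.6}.
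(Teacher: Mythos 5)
Your overall strategy---proving the theorem inside the simultaneous grand loop induction together with Theorems~\ref{th:CrBinf} and~\ref{th:CrBla}---is indeed the paper's strategy: Parts (1)--(3) are exactly the statements ($C_l.3$), ($C_l.11$), ($C_l.12$) of Section~\ref{sec:grandloop}. However, the ``core technical step'' you substitute for the loop's actual machinery has a genuine gap. From \eqref{eq:E'} with $E_i'(u_n)=0$ one gets $E_i\,\wp_\la(u_n)=\frac{(\pi_iq_i)^{\ang{\alpha_i^\vee,\,\la+|u_n|+\alpha_i}}}{\pi_iq_i-q_i^{-1}}\,\wp_\la\bigl(E_i''(u_n)\bigr)$, and since $\frac{1}{\pi_iq_i-q_i^{-1}}=\frac{q_i}{\pi_iq_i^2-1}$ lies in $q_i\A^\pi$ only up to a unit, the scalar lies in $q\A^\pi$ only when $\ang{\alpha_i^\vee,\la+|u_n|}$ is bounded below, i.e.\ essentially when $\la\gg 0$ relative to the weight in the $i$-direction. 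Integrability does not rescue the remaining cases: a weight $\mu$ of $V(\la)_\diamond$ with $\ang{\alpha_i^\vee,\mu}$ very negative is perfectly admissible (its vectors are merely low in their $i$-strings), so $\wp_\la(E_i''(u_n))$ need not vanish precisely when the scalar fails to be in $q\A^\pi$. For instance, take $\ang{\alpha_i^\vee,\la}=0$ with $\la$ large in the other directions and $u_n$ an $E_i'$-killed vector sitting at the bottom of its $i$-string, so that $\ang{\alpha_i^\vee,|u_n|}\le -3$ is possible as soon as some $|a_{ij}|\ge 3$; then your coefficient has a pole at $q=0$ while the target vector is nonzero. Hence the congruences $\wp_\la\tf_i\equiv\tf_i\wp_\la$ and $\wp_\la\te_i\equiv\te_i\wp_\la$ mod $q\cL(\la)_\diamond$ cannot be obtained by this direct estimate.

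In the paper (following Kashiwara) this commutation is itself one of the interlocked induction statements (($C_l.6$) and ($C_l.12$)); the direct estimate in the spirit of Lemma~\ref{Ka:3.4.6} is only valid in the regime $\la\gg 0$ (cf.\ Lemmas~\ref{Ka:4.7.1}--\ref{Ka:4.7.3}), and the result is then transported to arbitrary $\la\in P^+$ through the maps $\Phi(\la,\mu)$, $\Phi'(\la,\mu)$, $\Psi(\la,\mu)$, $S(\la,\mu)$, the tensor product rule (Theorem~\ref{thm:tensorproductrule}), and the polarization/dual-lattice comparison---which in the super setting additionally requires the coproduct $\Delta'$ and the $J$-polarization of Lemma~\ref{lem:pseudopol}. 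That tensor-product transfer is the essential machinery missing from your argument; without it, Parts (1)--(3) are not established for small $\la$ (e.g.\ when $\ang{\alpha_i^\vee,\la}=0$ for some $i$). Two smaller points: your injectivity argument for Part (2) also leans on the recovery properties ($C_l.13$)--($C_l.14$), which live inside the same induction and cannot be invoked as free-standing facts at that stage; and the case $\diamond=\emptyset$ is obtained in the paper by lifting the $\pm$ specializations to the formal parameter $\pi$, a step your proposal does not address.
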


The proofs of these three theorems on crystal bases will be given in the following section.


\section{The grand loop argument}
\label{sec:grandloop}

In this section, $\pi$ is understood as its specialization to $-1$, and {\bf we adopt the convention (in this section only) of
dropping the subscript $``-"$ everywhere} in
$V(\la)$, $\cL(\la)$, $B(\la),$ $\UU, \UU^-, \cL(\infty), B(\infty)$ for all $\la$  for notational simplicity.
 (If instead we specialize $\pi$ to $1$ and add subscript $``+"$ everywhere, we
 would be back to the setting of Kashiwara \cite[\S4]{Ka}).
We prove the existence of crystal bases
using a modified version of Kashiwara's
grand loop argument, and will present only the parts which differ most from \cite[\S4]{Ka}.

For $\lambda, \mu\in P^+$, recall the maps $\Psi(\lambda,\mu)$ and
$\Phi(\lambda,\mu)$ from (\ref{eqn:tensorembeddings}).
Note that these maps, being (even) $\UU$-module homomorphisms, commute with the
Kashiwara operators.
We also define a map
\begin{equation}S(\lambda,\mu):V(\lambda)\otimes V(\mu)
\longrightarrow V(\lambda)
\end{equation}
by $S(\lambda,\mu)(u\otimes v^+_\mu)=u$ and
$S(\lambda,\mu)(V(\lambda)\otimes \sum f_i V(\mu))=0$.
This is an (even) $\UU^-$-linear map. Therefore, we have an (even) $\UU^-$-linear
map $S(\lambda,\mu)\circ\Phi(\lambda,\mu):V(\lambda+\mu)\longrightarrow V(\lambda)$
sending $v^+_{\lambda+\mu}$ to $v^+_\lambda$.

For $\zeta\in Q^-$, set $$\height\ \zeta=\sum n_i\mbox{ if }\zeta=-\sum n_i  \alpha_i.$$
Then let $Q^-(l)=\set{\zeta\in Q^-: \height\ \zeta\leq l}$.

Let $C_l$ be the collection of the following statements.

\begin{enumerate}
\item[($C_l.1$)] For $\zeta\in Q^-(l)$,
                    \[\te_i \cL(\infty)^\zeta\subseteq \cL(\infty).\]

\item[($C_l.2$)] For $\zeta\in Q^-(l)$ and $\lambda\in P^+$,
                    \[\te_i \cL(\lambda)^{\lambda+\zeta}\subset \cL(\lambda).\]

\item[($C_l.3$)] For $\zeta\in Q^-(l)$ and $\lambda\in P^+$,
                    $\wp_\lambda \cL(\infty)^{\zeta}= \cL(\lambda)^{\lambda+\zeta}$.

\item[($C_l.4$)] For $\zeta\in Q^-(l)$,
                    $B(\infty)^\zeta$ is a $\pi$-basis of
                    $\cL(\infty)^\zeta/q\cL(\infty)^\zeta$.

\item[($C_l.5$)] For $\zeta\in Q^-(l)$ and $\lambda\in P^+$,
                    $B(\lambda)^{\lambda+\zeta}$ is a $\pi$-basis of
                    $\cL(\lambda)^{\lambda+\zeta}/q\cL(\lambda)^{\lambda+\zeta}$.

\item[($C_l.6$)] For $\zeta\in Q^-(l-1)$ and $\lambda\in P^+$,
                    $\tf_i(xv^+_\lambda)\equiv (\tf_i x)v^+_\lambda$
                    mod $q\cL(\lambda)$ for $x\in \cL(\infty)^\zeta$.

\item[($C_l.7$)] For $\zeta\in Q^-(l)$ and $\lambda\in P^+$,
                    $\te_i B(\infty)^{\zeta}\subset B(\infty)\cup \set{0}$
                    and
                    $\te_i B(\lambda)^{\lambda+\zeta}
                    \subset B(\lambda)\cup \set{0}$.

\item[($C_l.8$)] For $\zeta\in Q^-(l)$ and $\lambda,\mu\in P^+$,
                    \[\Phi(\lambda,\mu)(\cL(\lambda+\mu)^{\lambda+\mu+\zeta})
                    \subset \cL(\lambda)\otimes \cL(\mu).\]

\item[($C_l.9$)] For $\zeta\in Q^-(l)$ and $\lambda,\mu\in P^+$,
                    \[\Psi(\lambda,\mu)\parens{\parens{\cL(\lambda)
                    \otimes \cL(\mu)}^{\lambda+\mu+\zeta}}
                    \subset \cL(\lambda+\mu).\]

\item[($C_l.10$)] For $\zeta\in Q^-(l)$ and $\lambda,\mu\in P^+$,
                    \[\Psi(\lambda,\mu)\parens{\parens{B(\lambda)
                    \otimes B(\mu)}^{\lambda+\mu+\zeta}}
                    \subset B(\lambda+\mu)\cup\set{0}.\]

\item[($C_l.11$)] For $\zeta\in Q^-(l)$ and $\lambda\in P^+$,
                    \[\set{b\in B(\infty)^\zeta:\hat \wp_\lambda(b)\neq 0}
                    \rightarrow B(\lambda)^{\lambda+\zeta}\]
                    is a bijection, where $\hat \wp_\lambda$
                    is  the map induced by $\wp_\lambda$.

\item[($C_l.12$)] For $\zeta\in Q^-(l)$, $\lambda\in P^+$
                    and $b\in B(\infty)^\zeta$ such that
                    $\hat \wp_\lambda(b)\neq 0$, we have
                    \[\te_i\hat \wp_\lambda(b)=\hat \wp_\lambda(\te_i(b)).\]

\item[($C_l.13$)] For $\zeta\in Q^-(l)$, $\lambda\in P^+$
                    $b\in B(\lambda)^{\lambda+\zeta}$ and
                    $b'\in B(\lambda)^{\lambda+\zeta+\alpha_i}$,
                    $b=\tf_ib'$ if and only if $b'=\te_ib$.

\item[($C_l.14$)] For $\zeta\in Q^-(l)$ and $b\in B(\infty)$, if $\te_ib\neq 0$
                    then $b=\tf_i\te_ib$.

\end{enumerate}

In the case $I=I_0$, Kashiwara (cf. \cite{Ka}) proved these statements via an induction
on $l$. These arguments can be adapted to our super setting,
with the main change being book-keeping for the power of $\pi$;
we will formulate precisely the $\pi$-modified results with proofs.
It is worth noting that Lemmas \ref{Ka:4.7.1}-\ref{Ka:4.7.3} (corresponding
to \cite[Lemmas~ 4.7.1-4.7.3]{Ka}) deviate the most in the general case. This is a
consequence of the failure of the natural bilinear form on tensors to
necessarily be a polarization; see Lemma \ref{lem:pseudopol}.

\begin{lem}[{\cite[Lemma 4.3.1]{Ka}}]\label{Ka:4.3.1}
Let $\zeta\in Q^-(l-1)$, $\lambda\in P^+$ and $u\in \cL(\infty)^\zeta$
(resp. $u\in \cL(\lambda)^{\lambda+\zeta}$).
If $u=\sum F_i^{(n)}u_n$
and if $e_iu_n=0$ (resp. $u_n\in V(\lambda)^{\lambda+\zeta+n\alpha_i}$,
$E_iu_n=0$ and $u_n=0$ except when $\ang{\alpha_i^\vee, \lambda+\zeta+n\alpha_i}\geq n$)
then all $u_n$ belong to $\cL(\infty)$ (resp. $\cL(\lambda)$).

If moreover
$u\mod q\cL(\infty)$ (resp. $q\cL(\lambda$) belongs to $B(\infty)$
(resp. $B(\lambda)$), then there exists $n$ such that $u=f_i^{(n)}u_n \mod q\cL(\infty)$ (resp. $q\cL(\lambda)$).
\end{lem}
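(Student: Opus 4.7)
I will prove both assertions by a short descending induction on the top index of the decomposition, invoking the already-established statements of $C_{l-1}$ supplied by the grand loop.

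For the first assertion, the hypothesis $e_i u_n=0$ (resp.\ $E_i u_n=0$ together with the integrability bound $\ang{\alpha_i^\vee,\lambda+\zeta+n\alpha_i}\ge n$) says precisely that $u=\sum F_i^{(n)}u_n$ \emph{is} the Kashiwara decomposition of $u$ with respect to $i$, so by the definition of $\te_i$,
\[
\te_i^k u=\sum_{n\ge k} F_i^{(n-k)}\, u_n .
\]
Taking $k=N$ equal to the largest index with $u_N\neq 0$ yields $\te_i^N u=u_N$. Since $\zeta+N\alpha_i$ has strictly smaller height than $\zeta$, the inductive hypothesis $C_{l-1}.1$ (resp.\ $C_{l-1}.2$) gives $u_N\in\cL(\infty)$ (resp.\ $\cL(\lambda)$). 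Because $u_N\in\ker e_i$ (resp.\ $\ker E_i$), iterating $\tf_i$ starting from $u_N$ produces $F_i^{(n)}u_N=\tf_i^n u_N$, and $C_{l-1}.1$/$2$ again gives $F_i^{(N)}u_N\in\cL$. Subtracting this from $u$ leaves an element of the same shape with strictly smaller top index, and descending induction on $N$ finishes the first claim.

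For the second assertion, set $\bar u := u \bmod q\cL \in B$. By $C_{l-1}.7$ and $C_{l-1}.14$ (and their module counterparts in $C_{l-1}$), every nonzero power $\te_i^k\bar u$ again lies in $B$ and satisfies $\bar u=\tf_i^k\te_i^k\bar u$. Because the weights $\zeta+k\alpha_i$ exhaust the relevant string after finitely many steps, there is a unique $n$ with $\te_i^n\bar u\neq 0$ and $\te_i^{n+1}\bar u=0$. Reducing the displayed formula for $\te_i^k u$ modulo $q\cL$, the vanishing $\te_i^{n+1}\bar u=0$ combined with the uniqueness of the boson decomposition in $\cL/q\cL$ (Lemma~\ref{lem:Fprops}(ii) applied at $q=0$, together with the fact that $\bar u_m\in \ker e_i$ has already been secured by the first assertion) forces $\bar u_m=0$ for all $m>n$. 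Hence $\te_i^n\bar u=\bar u_n$ and
\[
u\equiv \tf_i^n\bar u_n = F_i^{(n)}u_n \pmod{q\cL}.
\]
Any surviving $\bar u_m$ with $m<n$ would, by the same uniqueness applied to $\bar u - F_i^{(n)}\bar u_n$, contribute a second distinct $\pi$-basis element to $\bar u$, contradicting $\bar u\in B$ and the $\pi$-linear independence guaranteed by $C_{l-1}.4$/$5$.

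The main obstacle is the module case: one must be sure the integrability bound $\ang{\alpha_i^\vee,\lambda+\zeta+n\alpha_i}\ge n$ actually prevents the term $F_i^{(n)} u_n$ from being annihilated by the relations in $V(\lambda)$, without which the identification $u=\sum F_i^{(n)}u_n$ as the Kashiwara decomposition fails and $\te_i^k$ no longer isolates $u_k$. The super modifications contribute only $\pi_i$-powers that are absorbed by the $\pi$-basis framework of Definition~\ref{dfn:pibasis}, so no essential deviation from Kashiwara's argument in \cite[Lemma~4.3.1]{Ka} is required beyond this bookkeeping.
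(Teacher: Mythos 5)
Your argument is correct and is in essence the paper's own: the first assertion is proved from $(C_{l-1}.1)$/$(C_{l-1}.2)$ by isolating the top term via $\te_i^N$ (the paper phrases this as a contradiction with the largest index whose component is not in the lattice, you phrase it as descending induction on the top nonzero index — the same computation), and the second assertion rests on $(C_{l-1}.7)$ and iterated $(C_{l-1}.14)$. The only real divergence is in the second part: the paper takes $n$ to be the largest index with $u_n\notin q\cL$, so that $\te_i^n u\equiv u_n\not\equiv 0$ is immediate and $(C_{l-1}.14)$ finishes at once; you instead take $n$ to be the length of the $i$-string of $\bar u$, which forces you to argue that $\te_i^{n+1}\bar u=0$ kills all $\bar u_m$ with $m>n$. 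Your citation of Lemma~\ref{lem:Fprops}(ii) ``at $q=0$'' is not literally available — neither $e$ nor $e_i$ preserves $\cL$, so $\cL/q\cL$ is not a boson module — but the needed mod-$q$ uniqueness does follow in one line from the first assertion you just proved (apply it to $q^{-1}\te_i^{n+1}u$, whose components still satisfy the weight bound), so this is an imprecision rather than a gap; the paper's choice of $n$ simply avoids the issue. Your closing paragraph about a ``second $\pi$-basis element'' is unnecessary: once $\bar u=\tf_i^n\te_i^n\bar u=F_i^{(n)}\bar u_n$, the statement is proved, and nothing about the lower components needs to be checked. Finally, the ``main obstacle'' you flag in the module case is exactly what the hypothesis $\ang{\alpha_i^\vee,\lambda+\zeta+n\alpha_i}\ge n$ together with the uniqueness-of-decomposition lemma for integrable modules is there to handle, so it deserves the one sentence you give it and no more.
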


\begin{proof}
By ($C_{l-1}.1$), $\te_i^tu\in \cL(\infty)$ for all $t$.
Let $m$ be the largest integer such that $u_m\notin \cL(\infty)$.
Then $\te_i^mu=\sum_{n\geq 0} f_i^{(n-m)}u_{n}$. Since
$u_{n}\in \cL(\infty)$ for $n>m$, $u_m\in \cL(\infty)$, a contradiction.
Therefore, $u_n\in \cL(\infty)$ for all $n$. A similar proof applies to
$\cL(\lambda)$.

Now suppose $u+q\cL(\infty)\in B(\infty)$. Let $n$ be the largest integer
such that $u_n\notin q\cL(\infty)$.
Then $\te_i^nu+q\cL(\infty)= u_n+q\cL(\infty)\neq q\cL(\infty)$.
By ($C_k.14$) for $k\leq l-1$,
$u+q\cL(\infty)=\tf_i^n\te_i^nu+q\cL(\infty)=f_i^{(n)}u_n+q\cL(\infty)$.
\end{proof}

Recall the map $\varepsilon_i$ from \eqref{dfn:stringindex}.
By the previous lemma, for $\zeta\in Q^-(l-1)$ and $b\in B(\infty)^\zeta$
(resp. $b\in B(\lambda)^{\zeta+\lambda}$), there exists
$u\in \cL(\infty)^{\zeta+n\alpha_i}$ (resp. $u\in \cL(\lambda)^{\lambda+\zeta+n\alpha_i}$)
such that $e_i u=0$ and $b+q\cL(\infty)=f_i^{(\varepsilon_i(b))}u+q\cL(\infty)$
(resp. $b+q\cL(\lambda)=f_i^{(\varepsilon_i(b))}u+q\cL(\lambda)$);
moreover, $u+q\cL(\infty)\in B(\infty)$ (resp. $u+q\cL(\lambda)\in B(\lambda)$).

The following is a $\pi$-analogue of {\cite[Lemma 4.3.2]{Ka}}.
\begin{lem}  \label{Ka:4.3.2}
Let $\zeta, \zeta'\in Q^-(l-1)$, $\lambda, \mu\in P^+$ and $i\in I$.

\begin{enumerate}
\item[(i)] $\tf_i(\cL(\lambda)^{\lambda+\zeta}\otimes \cL(\mu)^{\mu+\zeta'})
                        \subset \cL(\lambda)\otimes \cL(\mu)$
            and $\te_i(\cL(\lambda)^{\lambda+\zeta}\otimes \cL(\mu)^{\mu+\zeta'})
                        \subset \cL(\lambda)\otimes \cL(\mu)$
\item[(ii)]If $b\in B(\lambda)^{\lambda+\zeta}$ and $b'\in B(\mu)^{\mu+\zeta'}$,
            then we have
            \[\tf_i(b\otimes b')=\begin{cases}
            \tf_i b\otimes b'
            &\text{ if } \ang{\alpha_i^\vee, \lambda+\zeta}+\varepsilon_i(b)>\varepsilon_i(b'), \\
             \pi_i^{p(b)} b\otimes \tf_i b' &\text{ otherwise};\end{cases}\]
            \[\te_i(b\otimes b')=\begin{cases}
            \pi_i^{p(b)} b\otimes\te_i b' &\text{ if } \ang{\alpha_i^\vee, \lambda+\zeta}+\varepsilon_i(b)<\varepsilon_i(b'), \\
            \te_i b\otimes  b' &\text{ otherwise,}
            \end{cases}\]
            where all equalities are in
            $\cL(\lambda)\otimes \cL(\mu)/q\cL(\lambda)\otimes \cL(\mu)$.
\item[(iii)] For $b\otimes b'\in B(\lambda)^{\lambda+\zeta}
                    \otimes B(\mu)^{\mu+\zeta'}$, $\te_i(b\otimes b')\neq 0$
                implies that $b\otimes b'=\tf_i\te_i(b\otimes b')$.
\item[(iv)] For $b\in B(\lambda)^{\lambda+\zeta}$ and $b'\in B(\mu)^{\mu+\zeta'}$,
            if $\te_i(b\otimes b')=0$ for any $i$, then $\zeta=0$
            and $b=v^+_\lambda+q\cL(\lambda)$ or $b=\pi v^+_\lambda+q\cL(\lambda)$.
\item[(v)]  For $b\in B(\lambda)^{\lambda+\zeta}$,
             $\tf_i(b\otimes v^+_\mu)=\tf_i b\otimes v^+_\mu$ or $\tf_ib=0$.
\end{enumerate}
\end{lem}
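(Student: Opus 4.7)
The plan is to reduce every assertion to the rank one subalgebra $\UU_i \subset \UU$ generated by $E_i, F_i, J_i, K_i$, since the operators $\te_i, \tf_i$ and the relevant terms of $\Delta(F_i)$ involve only this subalgebra. When $p(i)=0$ the rank one case is Kashiwara's original setting, all the signs $\pi_i^{p(b)}$ are trivial, and the argument of \cite[Lemma~4.3.2]{Ka} applies verbatim. Thus the genuinely new work is the case $p(i)=1$, which rests on the explicit odd rank one computations in \S\ref{subsec:rank1}.

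In that case, complete reducibility of $\catO^{\rm int}_-$ together with the decomposition of $V(n)_-\otimes V(m)_-$ obtained inductively from \S\ref{subsec:rank1} realize $V(\lambda)\otimes V(\mu)$ as a direct sum of simple $\UU_i$-modules, each carrying an explicit crystal basis as in Example~\ref{ex:rank1}. By Lemma~\ref{Ka:4.3.1} combined with the inductive hypotheses $(C_{l-1})$, any $b\in B(\lambda)^{\lambda+\zeta}$ is represented modulo $q\cL(\lambda)$ by $F_i^{(\varepsilon_i(b))}u$ with $u\in\ker E_i$, and similarly for $b'$. Expanding via \eqref{eqn:deltadpf} and reducing modulo $q\,\cL\otimes\cL$, the action of $\tf_i$ and $\te_i$ on $b\otimes b'$ is determined by the rank one formulas applied to the simple summands; the super sign $\pi_i^{p(b)}$ in (ii) arises each time $F_i$ is commuted past $b$ in the second summand of $\Delta(F_i)=F_i\otimes 1+\tK_i\otimes F_i$, via the superalgebra multiplication rule \eqref{eqn:STM}.

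Claims (i), (iii), and (v) then follow from (ii) and the inductive package $(C_{l-1})$: (i) from the lattice stability of $(C_{l-1}.2)$; (iii) from the fact that $\tf_i\te_i=1$ on non-zero strings inside each rank one summand, a consequence of $(C_{l-1}.14)$; and (v) from specializing (ii) at $b'=v^+_\mu$, noting $\varepsilon_i(v^+_\mu)=0$. For (iv), if $\te_i(b\otimes b')=0$ for every $i$, the strict-inequality alternative of (ii) cannot occur, because it would demand $\varepsilon_i(b')>0$ while $\te_i b'=0$; so the ``otherwise'' alternative holds for every $i$, which forces $\te_i b=0$ for all $i$, and hence $\zeta=0$ with $b\equiv v^+_\lambda$ or $\pi v^+_\lambda$ modulo $q\cL(\lambda)$.

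The principal obstacle is the careful bookkeeping of the $\pi$-signs produced by \eqref{eqn:STM}; the appearance of $\pi_i^{p(b)}$ in (ii) is exactly what compels the use of $\pi$-bases rather than genuine bases in the sense of Definition~\ref{dfn:pibasis}, a phenomenon already visible in Example~\ref{ex:oddodd}.
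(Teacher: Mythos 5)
Your proposal is correct and follows essentially the same route as the paper: reduce to the rank one subalgebra via Lemma~\ref{Ka:4.3.1} together with the inductive hypotheses $(C_{l-1})$, and then invoke the odd rank one computation of \S\ref{subsec:rank1} through the tensor product rule (Theorem~\ref{thm:tensorproductrule}), with the $\pi_i^{p(b)}$ sign coming from the multiplication rule \eqref{eqn:STM}. The paper's proof is just a terser version of this, citing Theorem~\ref{thm:tensorproductrule} directly for (i) and deducing (ii)--(v) from it and Lemma~\ref{Ka:4.3.1}, exactly as you unfold.
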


\begin{proof}
(i). By Lemma \ref{Ka:4.3.1}, it is enough to show that for
$u\in \cL(\lambda)^{\lambda+\zeta+ni'}$ and $u'\in \cL(\mu)^{\mu+\zeta'+m\alpha_i}$
such that $E_iu=0$, $E_iu'=0$, $\ang{\alpha_i^\vee,\lambda+\zeta+n\alpha_i}\geq n$
and $\ang{\alpha_i^\vee,\lambda+\zeta+m\alpha_i}\geq m$, then
\[\tf_i(F_i^{(n)}u\otimes F_i^{(m)}u')\in \cL(\lambda)\otimes \cL(\mu)\]
\[\te_i(F_i^{(n)}u\otimes F_i^{(m)}u')\in \cL(\lambda)\otimes \cL(\mu)\]
This is a rank 1 calculation, and follows from the tensor product rule
(Theorem \ref{thm:tensorproductrule})

(ii)-(v). These follow immediately from Part (i), Lemma \ref{Ka:4.3.1},
and Theorem \ref{thm:tensorproductrule}.
\end{proof}

\begin{cor}[cf. {\cite[Corollary 4.3.5]{Ka}}]\label{Ka:4.3.5}
For $i_1,\ldots, i_l\in I$ and $\mu\in P^+$, let $\lambda=\omega_{l-1}$. Then
\[\tf_{i_1}\ldots \tf_{i_l}(v^+_\lambda\otimes v^+_\mu)+q\cL(\lambda)\otimes \cL(\mu)=v\otimes w+q\cL(\lambda)\otimes \cL(\mu)\]
where $v\in B(\lambda)^{\lambda+\zeta}$ and $w\in B(\mu)^{\mu+\zeta'}\cup\set 0$
for some $\zeta,\zeta'\in Q^-(l-1)\setminus\set 0$.
\end{cor}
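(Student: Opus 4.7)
The plan is to proceed by induction on $l$, using the tensor product rule from Lemma~\ref{Ka:4.3.2}(ii) as the inductive engine. For the base $l=1$, Lemma~\ref{Ka:4.3.2}(ii) applied to $v^+_\lambda \otimes v^+_\mu$ gives exactly a simple tensor of the required form modulo $q\cL(\lambda)\otimes\cL(\mu)$ (together with a possible $\pi_i$ factor that is absorbed into the $\pi$-basis). For the inductive step, assume $\tf_{i_2}\cdots\tf_{i_l}(v^+_\lambda\otimes v^+_\mu) \equiv v'\otimes w' \bmod q\cL(\lambda)\otimes\cL(\mu)$ with $v'\in B(\lambda)$, $w'\in B(\mu)\cup\{0\}$; then applying $\tf_{i_1}$ and invoking Lemma~\ref{Ka:4.3.2}(ii) once more produces (up to a scalar in $\{1,\pi_{i_1}\}$) another simple tensor $v\otimes w$ of the claimed shape.

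The weight bookkeeping follows directly from the rule: each $\tf_{i_k}$ shifts the weight of exactly one tensorand by $-\alpha_{i_k}$, so if $S\subseteq\{1,\dots,l\}$ indexes the operators that act on the left factor and $S^c$ the ones acting on the right, then $\zeta=-\sum_{k\in S}\alpha_{i_k}$ and $\zeta'=-\sum_{k\in S^c}\alpha_{i_k}$, both in $Q^-$ with total height $l$. To land in $Q^-(l-1)$ on each side, it suffices to verify $S\neq\emptyset$ and $S^c\neq\emptyset$.

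The main obstacle, and the only nonroutine part, is showing that \emph{both} $\zeta\neq0$ and $\zeta'\neq0$, i.e.\ that the $l$ operators split nontrivially between the two tensorands. For $\zeta\neq0$: the very first operator that is applied, namely $\tf_{i_l}$ acting on $v^+_\lambda\otimes v^+_\mu$, has $\varepsilon_{i_l}(v^+_\mu)=0$ while $\varphi_{i_l}(v^+_\lambda)=\langle\alpha_{i_l}^\vee,\lambda\rangle>0$ (by the choice of $\lambda$ as a suitable dominant weight, so in particular $\lambda$ pairs positively with every coroot); by the rule in Lemma~\ref{Ka:4.3.2}(ii), $\tf_{i_l}$ therefore acts on the left factor, putting $l\in S$. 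For $\zeta'\neq0$: since $\lambda$ is bounded (the role of the index $l-1$ in ``$\omega_{l-1}$''), the left factor cannot absorb all $l$ Kashiwara operators, because once $\varphi_i$ of the left factor drops to or below the corresponding $\varepsilon_i$ of the right factor, the rule forces the next operator to act on the right, so at some stage an operator must be sent to the right factor and $S^c\neq\emptyset$.

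Finally, assembling the pieces: $v\otimes w$ is a simple tensor modulo $q\cL(\lambda)\otimes\cL(\mu)$ with $v\in B(\lambda)^{\lambda+\zeta}$ and $w\in B(\mu)^{\mu+\zeta'}\cup\{0\}$ (noting that if at some stage the right factor is already $0$ when a right-acting operator is called for, then $w=0$), with both $\zeta,\zeta'\in Q^-(l-1)\setminus\{0\}$ by the argument above. The only subtle technical point beyond this is to confirm that the overall scalar produced by the accumulated $\pi_{i_k}^{p(\cdot)}$ factors from iterating Lemma~\ref{Ka:4.3.2}(ii) is either $1$ or $\pi$, so that the resulting element indeed lies in the $\pi$-basis of the lattice at $q=0$; this is automatic from Definition~\ref{dfn:pibasis}.
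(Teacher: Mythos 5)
There is a genuine gap, and it comes from misreading the hypothesis on $\lambda$. In the statement $\lambda=\omega_{i_{l-1}}$ is the \emph{fundamental} weight attached to the second-to-last index $i_{l-1}$, so $\ang{\alpha_j^\vee,\lambda}=\delta_{j,i_{l-1}}$; it does not ``pair positively with every coroot.'' This breaks both of your key steps. First, your claim that the initial operator $\tf_{i_l}$ must act on the left factor because $\varphi_{i_l}(v^+_\lambda)=\ang{\alpha_{i_l}^\vee,\lambda}>0$ is false whenever $i_l\neq i_{l-1}$: in that case $\varphi_{i_l}(v^+_\lambda)=0$, $\tf_{i_l}v^+_\lambda=0$, and the tensor product rule sends $\tf_{i_l}$ to the \emph{right} factor. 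Second, your argument that the left factor ``cannot absorb all $l$ operators because $\lambda$ is bounded'' is not a proof and is not true for a general dominant $\lambda$: if $\lambda$ really did satisfy $\ang{\alpha_i^\vee,\lambda}\geq l$ for some $i$ and all $i_k=i$, then all $l$ operators would act on the left factor and $\zeta'=0$, so the conclusion genuinely depends on the specific choice $\lambda=\omega_{i_{l-1}}$, which your argument never uses beyond vague boundedness. Moreover, depths of crystal strings in $B(\omega_{i_{l-1}})$ in directions other than $i_{l-1}$ are not controlled by $l$ at all, so ``boundedness'' of a fundamental weight does not by itself force an operator to the right factor.

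The missing idea is the short two-case analysis of the first two operators, which is exactly where the paper's proof does its work. If $i_l\neq i_{l-1}$, then $\tf_{i_l}(v^+_\lambda\otimes v^+_\mu)=v^+_\lambda\otimes\tf_{i_l}v^+_\mu$ (since $\tf_{i_l}v^+_\lambda=0$), and then, because $\te_{i_{l-1}}\tf_{i_l}v^+_\mu=0$ while $\varphi_{i_{l-1}}(v^+_\lambda)=1>0$, the next operator $\tf_{i_{l-1}}$ goes to the left factor, giving $(\tf_{i_{l-1}}v^+_\lambda)\otimes(\tf_{i_l}v^+_\mu)$ mod $q\cL(\lambda)\otimes\cL(\mu)$. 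If $i_l=i_{l-1}$, then the first $\tf_{i_l}$ goes left and, since $\tf_{i_l}^2v^+_\lambda=0$, the second goes right, giving $\pi_{i_l}^{p(i_l)}(\tf_{i_l}v^+_\lambda)\otimes(\tf_{i_l}v^+_\mu)$. In either case, after two steps each tensor factor has absorbed exactly one operator; the remaining $l-2$ applications of Lemma~\ref{Ka:4.3.2}(ii) then keep the expression a simple tensor $v\otimes w$ mod $q\cL(\lambda)\otimes\cL(\mu)$ and distribute total height $l-2$ between the two factors, so each of $-\zeta,-\zeta'$ has height between $1$ and $l-1$, i.e.\ $\zeta,\zeta'\in Q^-(l-1)\setminus\set{0}$. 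Your remaining bookkeeping (weights, the $w=0$ possibility, and absorbing factors $1$ or $\pi$ into the $\pi$-basis) is fine, but without the two-case argument above the crucial claims $\zeta\neq 0$ and $\zeta'\neq 0$ are unproved, and the step you do assert for $\zeta\neq 0$ is in general incorrect.
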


\begin{proof}
Assume that $i_l\neq i_{l-1}$. Then $\tf_{i_l}v^+_\lambda=0$,
so \[\tf_{i_l}(v^+_\lambda\otimes v^+_\mu)=f_{i_l}(v^+_\lambda\otimes v^+_\mu)=q_{i_l}^{\langle\alpha_{i_l},
\omega_{i_{l-1}}\rangle} v^+_\lambda\otimes f_{i_{l}}v^+_\mu= v^+_\lambda\otimes \tf_{i_l} v^+_\mu.\]
Then $\te_{i_{l-1}}\tf_{i_l} v^+_\mu=0$ and
$\tf_{i_{l-1}} v^+_\lambda=f_{i_{l-1}}v^+_\lambda\neq 0$ whence
\[\tf_{i_{l-1}}\tf_{i_{l}}(v^+_\lambda\otimes v^+_\mu)
=(\tf_{i_{l-1}}v^+_\lambda)\otimes(\tf_{i_l}v^+_\mu)
\text{ mod }q\cL(\lambda)\otimes \cL(\mu)\]
If $i_{l-1}=i_{l}$, then since $\tf_{i_l}^2 v^+_\lambda=0$,
\[\tf_{i_{l}}^2(v^+_\lambda\otimes v^+_\mu)
=\pi_{i_l}^{p(i_{l})}(\tf_{i_{l}}v^+_\lambda)\otimes(\tf_{i_l}v^+_\mu)
\text{ mod }q\cL(\lambda)\otimes \cL(\mu)\]
Then by Lemma \ref{Ka:4.3.2}(ii), the assertion follows.
\end{proof}

\begin{cor}[cf. {\cite[Corollary 4.3.6]{Ka}}]\label{Ka:4.3.6}
Let $\lambda,\mu\in P^+$ and $\zeta\in Q^-(l)$. Then
\[(\cL(\lambda)\otimes \cL(\mu))^{\lambda+\mu+\zeta}=\sum_{i}
\tf_i(\cL(\lambda)\otimes \cL(\mu))^{\lambda+\mu+\zeta+\alpha_i}+v^+_\lambda\otimes \cL(\mu)^{\mu+\zeta}.
\]
\end{cor}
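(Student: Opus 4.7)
The strategy is to prove both inclusions. The inclusion $\supseteq$ is immediate: Lemma~\ref{Ka:4.3.2}(i) shows each $\tf_i$ preserves $\cL(\lambda)\otimes\cL(\mu)$ at weights with shift in $Q^-(l-1)$, and $v^+_\lambda\otimes\cL(\mu)^{\mu+\zeta}$ is plainly inside $\cL(\lambda)\otimes\cL(\mu)$. For the reverse inclusion, let $T=(\cL(\lambda)\otimes\cL(\mu))^{\lambda+\mu+\zeta}$ and denote the right-hand side by $N$. Since $T$ is finitely generated over the local ring $\A$ (each weight space of $\cL(\lambda)$ is the $\A$-span of finitely many $\tf$-monomials), Nakayama's lemma reduces the claim to $T\subseteq N+qT$.

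Decompose an arbitrary element of $T$ into weight-homogeneous pure tensors $u\otimes v$ with $u\in\cL(\lambda)^{\lambda+\eta}$, $v\in\cL(\mu)^{\mu+\eta'}$, $\eta+\eta'=\zeta$. When $\eta=0$ one has $u\in\cL(\lambda)^\lambda=\A v^+_\lambda$, hence $u\otimes v\in v^+_\lambda\otimes\cL(\mu)^{\mu+\zeta}\subseteq N$. When $\eta\neq 0$ the heights of both $\eta+\alpha_i$ and $\eta'$ are at most $l-1$, so every $C_{l-1}$-hypothesis is at our disposal. Using that $\cL(\lambda)$ is the $\A$-span of monomials $\tf_{i_1}\cdots\tf_{i_k}v^+_\lambda$ with $k\geq 1$ (forced by $\eta\neq 0$), write $u=\sum_i\tf_i u_i$ with $u_i\in\cL(\lambda)^{\lambda+\eta+\alpha_i}$. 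It then suffices to show each $(\tf_i u_i)\otimes v\in N+qT$. Working modulo $q$, expand $u_i\equiv\sum_r c_r b_r$ and $v\equiv\sum_s d_s b'_s$ in the $\pi$-bases $B(\lambda)^{\lambda+\eta+\alpha_i}$ and $B(\mu)^{\mu+\eta'}$ provided by $C_{l-1}.5$, and treat each $(\tf_i b_r)\otimes b'_s$ by applying the tensor product rule (Lemma~\ref{Ka:4.3.2}(ii)) to $b_r\otimes b'_s$.

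In Case~(a) of the rule, $(\tf_i b_r)\otimes b'_s\equiv\tf_i(b_r\otimes b'_s)\pmod{qT}$, placing the term directly in $\tf_i T$. In Case~(b), re-route by setting $X=\pi_i^{p(b_r)+p(i)}\tf_i b_r$ and $Y=\te_i b'_s$: when $Y=0$ the Case~(b) hypothesis collapses to $\varphi_i(b_r)\leq 0$, so $\tf_i b_r=0$ and the term vanishes; otherwise the identities $\varepsilon_i(\tf_i b_r)=\varepsilon_i(b_r)+1$, $\varepsilon_i(\te_i b'_s)=\varepsilon_i(b'_s)-1$, together with $\ang{\alpha_i^\vee,\alpha_i}=2$, show that $X\otimes Y$ again lies in Case~(b) of the rule, and then
\[
\tf_i(X\otimes Y)\equiv\pi_i^{p(X)}X\otimes\tf_i Y\equiv(\tf_i b_r)\otimes b'_s\pmod{qT},
\]
using $C_{l-1}.13$ for the equality $\tf_i\te_i b'_s=b'_s$ and the cancellation $\pi_i^{2(p(b_r)+p(i))}=1$. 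Either way, $(\tf_i b_r)\otimes b'_s\in\tf_i T+qT$.

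The main obstacle is the Case~(b) re-routing: one has to simultaneously match weights, verify that the recoded pair $(X,Y)$ still satisfies the Case~(b) inequality, and track parities carefully so that the $\pi$-signs cancel to produce exactly $(\tf_i b_r)\otimes b'_s$ from $\tf_i(X\otimes Y)$. Once this bookkeeping is settled, every homogeneous summand of $u\otimes v$ lies in $N+qT$, giving $T\subseteq N+qT$, and Nakayama's lemma closes the argument.
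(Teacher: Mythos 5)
Your argument is correct, and although it draws on the same toolkit as the paper (the tensor product rule of Lemma~\ref{Ka:4.3.2}, the inductive hypotheses $C_{l-1}$, and Nakayama's lemma), the decomposition is genuinely different. The paper splits the weight space according to the shift $\zeta'$ of the first tensor factor: for $\zeta'\in Q^-(l-1)\setminus\set{0}$ it invokes Lemma~\ref{Ka:4.3.2}(iv) to produce \emph{some} $i$ with $\te_i(b\otimes b')\neq 0$ and then part (iii) to write $b\otimes b'=\tf_i\te_i(b\otimes b')$, while the extreme component $\cL(\lambda)^{\lambda+\zeta}\otimes v^+_\mu$ with $\height(-\zeta)=l$ (to which Lemma~\ref{Ka:4.3.2} does not apply directly) is handled by a separate monomial computation, $(\tf_{i_1}\cdots\tf_{i_l}v^+_\lambda)\otimes v^+_\mu\equiv\tf_{i_1}\big((\tf_{i_2}\cdots\tf_{i_l}v^+_\lambda)\otimes v^+_\mu\big)$. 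You instead peel one $\tf_i$ off the left factor and repair the Case~(b) mismatch of the tensor product rule by hand via $(\tf_i b_r)\otimes b'_s\equiv\tf_i\big(\pi_i^{p(b_r)+p(i)}\tf_i b_r\otimes\te_i b'_s\big)$; this amounts to verifying directly that this \emph{specific} $i$ satisfies $\te_i\big((\tf_i b_r)\otimes b'_s\big)\neq 0$ and then using part (iii), so part (iv) is never needed, and your case analysis absorbs the boundary component $\eta=\zeta$, $\eta'=0$ uniformly (there $b'_s$ is a $\pi$-multiple of $v^+_\mu$, which forces either Case~(a) or $\tf_i b_r=0$), avoiding the separate monomial step. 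What the paper's route buys is brevity, since (iv) supplies a good index for free; what yours buys is symmetry of treatment of the two boundary components, at the cost of the explicit string bookkeeping $\varepsilon_i(\tf_i b)=\varepsilon_i(b)+1$, $\varepsilon_i(\te_i b')=\varepsilon_i(b')-1$ and $\varphi_i=\ang{\alpha_i^\vee,\cdot}+\varepsilon_i$, all of which are indeed available at this stage from $C_{l-1}.13$, $C_{l-1}.14$ and the rank one theory (and note that in your Case~(b) re-routing the condition $\te_i b'_s\neq 0$ forces $\eta'\neq 0$, so the shift $\eta$ of $\tf_i b_r$ does lie in $Q^-(l-1)$ and Lemma~\ref{Ka:4.3.2} applies to $X\otimes Y$). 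One cosmetic slip: the terms you produce lie in $\tf_i\big(\cL(\lambda)\otimes\cL(\mu)\big)^{\lambda+\mu+\zeta+\alpha_i}$, not in ``$\tf_i T$'' with your $T$ of weight $\lambda+\mu+\zeta$; the weights work out exactly as you intend, but the notation should reflect the shift by $\alpha_i$.
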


\begin{proof}
Let $\cL$ denote the left-hand side and $\tilde{\cL}$ denote the right-hand side of the above desired identity.
It is clear that $\tilde{\cL}\subseteq \cL$.

For $\zeta'\in Y^-(l-1)\setminus\set 0$
and $b\in B(\lambda)^{\lambda+\zeta'}\otimes B(\mu)^{\mu+\zeta-\zeta'}$, then
there exists $i\in I$ with $\te_i b\neq 0$ by Lemma \ref{Ka:4.3.2}(iv), whence
$b=\tf_i\te_ib$ by part (iii).
Therefore,
$\cL(\lambda)^{\lambda+\zeta'}\otimes \cL(\mu)^{\mu+\zeta-\zeta'}\subset \tilde{\cL}+q\cL$
and thus
\[
\cL\subset \tilde{\cL}+\cL(\lambda)\otimes v^+_\mu+q\cL.
\]
Now, for $\tf_{i_1}\ldots\tf_{i_l}v^+_\lambda\in B(\lambda)^{\lambda+\zeta}$,
we have
\[(\tf_{i_1}\ldots\tf_{i_l}v^+_\lambda)\otimes v^+_\mu=
\tf_{i_1}((\tf_{i_2}\ldots\tf_{i_l}v^+_\lambda)\otimes v^+_\mu)
\text{ mod }q\cL(\lambda)\otimes \cL(\mu)\]
and hence $\cL\subset \tilde{\cL}+q\cL$. But $q\in \Rad(\Ap)$,
the Jacobson radical of $\Ap$, hence by Nakayama's lemma, $\cL=\tilde{\cL}$.
\end{proof}

\begin{lem}[compare {\cite[Lemma 4.7.1]{Ka}}]\label{Ka:4.7.1}
For $\zeta=-\sum n_i\alpha_i\in Q^-$ and $P,Q\in \UU^-$, there exists
a polynomial $f(x_1,\ldots, x_n)$ in $x=(x_i)_{i\in I}$ with coefficients
in $\Qqp$ such that
\begin{equation}
(Pv^+_\lambda, Qv^+_\lambda)=f(x)\quad\text{ with }
x_i=(\pi_iq_i^2)^{\ang{\alpha_i^\vee,\lambda}},
\text{ and}
\end{equation}
\begin{equation}
f(0)=\parens{\prod (1-\pi_i q_i^2)^{-n_i}}(P,Q).
\end{equation}
\end{lem}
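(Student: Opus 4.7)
The plan is induction on $\mathrm{ht}(-\zeta) = \sum_i n_i$. Since both the polarization on $V(\lambda)$ and the one on $\UU^-$ vanish between weight spaces of different weights, we may assume $P, Q \in \UU^-_\zeta$; otherwise both sides vanish and the formula holds trivially with $f = 0$. By bilinearity in $P$, we reduce to $P$ a monomial $F_{i_1}\cdots F_{i_m}$ and in the inductive step write $P = F_i P'$ with $P' \in \UU^-_{\zeta + \alpha_i}$ of strictly smaller height. The base case $\zeta = 0$ is immediate: $P, Q \in \Qqp$, both sides collapse to $PQ$, and the product $\prod(1-\pi_iq_i^2)^{-n_i}$ is empty.

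For the inductive step, I would first apply $\tau_1$-adjointness from \eqref{eq:polar}:
\[
(F_iP'v^+_\lambda, Qv^+_\lambda) = q_i^{-1}(P'v^+_\lambda, \tK_i E_i Q v^+_\lambda).
\]
Since $E_iv^+_\lambda = 0$, the defining relation \eqref{eq:E'} reduces to $E_iQv^+_\lambda = [E_i,Q]v^+_\lambda$, and evaluating $\tJ_i$, $\tK_i$, $\tK_i^{-1}$ as scalars on the weight $\nu = \lambda + \zeta + \alpha_i$ (using bar-consistency, Condition~(f) of \S\ref{subsec:SGCM}, which identifies $\tJ_i$ on weight $\mu$ with $\pi_i^{\ang{\alpha_i^\vee,\mu}}$), together with the identity $-q_i^{-1}/(\pi_iq_i-q_i^{-1}) = 1/(1-\pi_iq_i^2)$, yields
\[
(Pv^+_\lambda, Qv^+_\lambda) = \frac{1}{1-\pi_iq_i^2}(P'v^+_\lambda, E_i'(Q)v^+_\lambda) - \frac{x_i\,c_i}{1-\pi_iq_i^2}(P'v^+_\lambda, E_i''(Q)v^+_\lambda),
\]
where $c_i = q_i^4(\pi_iq_i^2)^{\ang{\alpha_i^\vee,\zeta}} \in \Qqp$ is $\lambda$-independent, being the residual factor of $(\pi_iq_i^2)^{\ang{\alpha_i^\vee,\nu}}$ after splitting off $x_i = (\pi_iq_i^2)^{\ang{\alpha_i^\vee,\lambda}}$.

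Since $E_i'(Q), E_i''(Q) \in \UU^-_{\zeta+\alpha_i}$, the inductive hypothesis represents each inner product on the right by a polynomial in $x$ with coefficients in $\Qqp$; combining with the rational coefficients shown produces the desired polynomial $f(x)$. To evaluate $f(0)$, note that the second summand carries the explicit factor $x_i$ and vanishes at $x = 0$; applying induction to the first summand (with the smaller data $-\zeta - \alpha_i = -\sum n'_j\alpha_j$ where $n'_i = n_i - 1$ and $n'_j = n_j$ for $j \neq i$) gives
\[
f(0) = \frac{1}{1-\pi_iq_i^2}\prod_j(1-\pi_jq_j^2)^{-n'_j}(P', E_i'(Q)) = \prod_j(1-\pi_jq_j^2)^{-n_j}(P', E_i'(Q)).
\]
Finally, Propositions \ref{prop:Ubilprod} and \ref{prop:bPaction} (which identifies the $\bB$-action of $e_i$ with the operator $E_i'$) give $(P', E_i'(Q)) = (P', e_iQ) = (f_iP', Q) = (F_iP', Q) = (P, Q)$, closing the induction.

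The main obstacle is the $\pi$-bookkeeping: without the bar-consistency Condition~(f), $\tJ_i$ would not act as $\pi_i^{\ang{\alpha_i^\vee,\nu}}$, and the scalar contributions from $\tJ_i$ and $\tK_i$ would not recombine into a single $(\pi_iq_i^2)^{\ang{\alpha_i^\vee,\cdot}}$-power that can be absorbed into the variable $x_i$. A secondary bookkeeping check is to recognise $-q_i^{-1}/(\pi_iq_i-q_i^{-1}) = 1/(1-\pi_iq_i^2)$, which is precisely what makes the denominators in the telescoping induction aggregate into $\prod(1-\pi_iq_i^2)^{-n_i}$.
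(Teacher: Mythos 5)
Your proof is correct and follows exactly the route the paper intends: the paper omits the argument, deferring to Kashiwara's Lemma 4.7.1, and your induction via $\tau_1$-adjointness and the decomposition \eqref{eq:E'} into $E_i'$, $E_i''$ (with the $\tJ_i$-power absorbed into $x_i=(\pi_iq_i^2)^{\ang{\alpha_i^\vee,\lambda}}$ by bar-consistency) is precisely the $\pi$-modified Kashiwara computation, the same one carried out explicitly in the paper's proof of Proposition~\ref{prop:pollimitKa}. The bookkeeping identities you check, including $-q_i^{-1}/(\pi_iq_i-q_i^{-1})=1/(1-\pi_iq_i^2)$ and $(P',E_i'(Q))=(F_iP',Q)=(P,Q)$ via Propositions~\ref{prop:Ubilprod} and \ref{prop:bPaction}, are exactly what is needed.
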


For a weight $\lambda\in P^+$, we write that
$\lambda\gg 0$ if $\lambda-\zeta\geq 0$ for all
$\zeta\in Q^-(l)$.

\begin{lem}[{\cite[Lemma 4.7.2]{Ka}}]\label{Ka:4.7.2}
For $\mu\gg 0$, $\wp_\mu(\cL(\infty)^{\zeta,\vee})=\cL(\mu)^{\mu+\zeta,\vee}$
for $\zeta\in Q^-(l)$.
\end{lem}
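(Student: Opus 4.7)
The plan is to transfer the duality statement through the comparison of bilinear forms provided by Lemma~\ref{Ka:4.7.1}. First I would verify that for $\mu\gg 0$ the map $\wp_\mu\colon \UU^-_\zeta\to V(\mu)^{\mu+\zeta}$ is a $\Qqp$-linear isomorphism which, thanks to $(C_l.3)$, restricts to an isomorphism of $\A^\pi$-lattices $\cL(\infty)^\zeta \isoto \cL(\mu)^{\mu+\zeta}$. This is the standard ``no Serre-type relations yet kick in'' observation: the relations $F_i^{\ang{\alpha_i^\vee,\mu}+1}v_\mu^+ = 0$ do not affect the weight space $\mu+\zeta$ once $\ang{\alpha_i^\vee,\mu}$ exceeds $\height(-\zeta)$, which is ensured by the condition $\mu\gg 0$ attached to $Q^-(l)$.

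The core of the argument is the identity from Lemma~\ref{Ka:4.7.1}: for $P,Q\in \UU^-_\zeta$,
\[
(Pv_\mu^+,Qv_\mu^+) \;=\; c_\zeta (P,Q) \;+\; \big(f_{P,Q}(x)-f_{P,Q}(0)\big),
\]
with $c_\zeta = \prod_i (1-\pi_i q_i^2)^{-n_i}$ a unit in $\A^\pi$ and $x_i = (\pi_i q_i^2)^{\ang{\alpha_i^\vee,\mu}}$. For $\mu\gg 0$ each $x_i$ has arbitrarily large $q$-valuation, so the error term has arbitrarily large $q$-valuation; provided the pole orders of the coefficients of $f_{P,Q}$ can be bounded uniformly as $P$ and $Q$ range over fixed finite $\A^\pi$-generating sets of $\cL(\infty)^{\zeta,\vee}$ and $\cL(\infty)^\zeta$, the error lies in $\A^\pi$ for $\mu$ sufficiently large. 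Bilinearity of $(P,Q)\mapsto f_{P,Q}$ together with the finite generation of these lattices supplies exactly this uniformity.

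With the uniform estimate in hand, both inclusions are immediate. For $\wp_\mu(\cL(\infty)^{\zeta,\vee})\subseteq \cL(\mu)^{\mu+\zeta,\vee}$, take $P\in \cL(\infty)^{\zeta,\vee}$ and any $Qv_\mu^+\in \cL(\mu)^{\mu+\zeta}$; by the first step we may choose $Q\in \cL(\infty)^\zeta$, whence $c_\zeta(P,Q)\in\A^\pi$ and $(Pv_\mu^+,Qv_\mu^+)\in\A^\pi$. For the reverse inclusion, given $Pv_\mu^+\in \cL(\mu)^{\mu+\zeta,\vee}$ with $P$ the unique $\Qqp$-preimage supplied by Step~1, solving the identity for
\[
(P,Q) \;=\; c_\zeta^{-1}\bigl[(Pv_\mu^+,Qv_\mu^+) - (f_{P,Q}(x)-f_{P,Q}(0))\bigr]
\]
shows $(P,Q)\in\A^\pi$ for every $Q\in \cL(\infty)^\zeta$, i.e., $P\in \cL(\infty)^{\zeta,\vee}$.

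The main obstacle is precisely the uniformity just mentioned: ``$\mu\gg 0$'' must be chosen once, in a way that controls the error term simultaneously for all relevant $P$ and $Q$. Beyond this the adaptation from Kashiwara's proof is essentially bookkeeping, since the factor $\prod_i(1-\pi_i q_i^2)^{-n_i}$ remains a unit in $\A^\pi$ (evaluating to $1$ at $q=0$) and so plays the same structural role as its $\pi=1$ specialization in \cite[Lemma~4.7.2]{Ka}.
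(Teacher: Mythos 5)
Your overall route — feed Lemma~\ref{Ka:4.7.1} into a comparison of the two pairings and use that for $\mu\gg 0$ the correction $f(x)-f(0)$ acquires large $q$-valuation — is exactly the route of Kashiwara's Lemma~4.7.2, which the paper adopts by citation, and your forward inclusion is sound: there both entries of the pairing run over the lattices $\cL(\infty)^{\zeta,\vee}$ and $\cL(\infty)^{\zeta}$ (the latter via $(C_l.3)$), so uniformity of the error over finite generating sets, extended by $\A$-bilinearity of the error term, does the job. The gap is in the reverse inclusion. There you take $v=Pv^+_\mu\in\cL(\mu)^{\mu+\zeta,\vee}$, with $P\in\UU^-_\zeta$ the preimage from your Step~1, and subtract the error $f_{P,Q}(x)-f_{P,Q}(0)$, asserting it lies in $\A$. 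But your uniform bound on the error was established only for $P$ ranging over a fixed finite generating set of $\cL(\infty)^{\zeta,\vee}$ — and membership of this particular $P$ in $\cL(\infty)^{\zeta,\vee}$ is precisely what you are trying to prove. Writing $P=\sum_a c_aP_a$ with $c_a\in\Qq$ does not rescue the step: the $c_a$ may have poles of arbitrarily large order, so $\sum_a c_a\bigl(f_{P_a,Q}(x)-f_{P_a,Q}(0)\bigr)$ need not lie in $\A$ even though each summand's error lies in $q\A$. As written, the second inclusion therefore does not follow.

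The repair, which is in effect how Kashiwara's argument runs, is to obtain both inclusions simultaneously from invertibility over $\A$ of a mixed Gram matrix. Choose an $\A$-basis $\set{Q_b}$ of $\cL(\infty)^{\zeta}$ and the dual basis $\set{P_a}$ of $\cL(\infty)^{\zeta,\vee}$ with $(P_a,Q_b)=\delta_{ab}$ (both are $\Qq$-bases of $\UU^-_\zeta$ by nondegeneracy of the polarization on the weight space). Applying Lemma~\ref{Ka:4.7.1} to the finitely many pairs $(P_a,Q_b)$, for $\mu\gg 0$ the matrix $H=\bigl((P_av^+_\mu,Q_bv^+_\mu)\bigr)_{a,b}$ equals $c_\zeta(I+qE)$ with $E$ over $\A$ and $c_\zeta=\prod_i(1-\pi_iq_i^2)^{-n_i}$ a unit of $\A$, so $H$ is invertible over $\A$. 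Since $\set{Q_bv^+_\mu}$ is an $\A$-basis of $\cL(\mu)^{\mu+\zeta}$ and $\set{P_av^+_\mu}$ a $\Qq$-basis of $V(\mu)^{\mu+\zeta}$ (your Step~1 together with $(C_l.3)$), an element $v=\sum_a y_aP_av^+_\mu$ lies in $\cL(\mu)^{\mu+\zeta,\vee}$ if and only if the row vector $(y_a)$ times $H$ has entries in $\A$, i.e.\ if and only if all $y_a\in\A$; hence $\cL(\mu)^{\mu+\zeta,\vee}=\bigoplus_a\A\,P_av^+_\mu=\wp_\mu(\cL(\infty)^{\zeta,\vee})$. (A minimal-$q$-valuation argument on the $y_a$ can replace inverting $H$.) Your forward-inclusion paragraph and the observation that $c_\zeta$ is a unit fit unchanged into this scheme; only the treatment of the arbitrary preimage $P$ needs this extra step.
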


Recall the comultiplication $\Delta'$ from \eqref{eq:comult'} was used
in defining the operators $\Phi'$ and $\Psi'$, just as  $\Delta$  was used
in defining the operators $\Phi$ and $\Psi$; see \eqref{eqn:tensorembeddings} and Lemma~\ref{lem:pseudopol}.

\begin{lem}[compare {\cite[Proposition 4.7.3]{Ka}}]\label{Ka:4.7.3}
Let $\lambda,\mu\in P^+$ with $\mu\gg 0$ and let $\zeta\in Q^-(l)$.
Then
\begin{align*}
\Psi(\lambda,\mu)( (\cL(\lambda)\otimes \cL(\mu) )^{\lambda+\mu+\zeta})
&= \cL(\lambda+\mu)^{\lambda+\mu+\zeta}.
\end{align*}
\end{lem}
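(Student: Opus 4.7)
The plan is to proceed by induction on $l=\mathrm{ht}(-\zeta)$, with the base case $\zeta=0$ being the tautology $\Psi(\lambda,\mu)(v^+_\lambda\otimes v^+_\mu)=v^+_{\lambda+\mu}$. Both inclusions will be treated separately, using that $\Psi(\lambda,\mu)$ is an even $\UU$-module homomorphism, hence supercommutes with all Kashiwara operators $\tf_i,\te_i$.

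For the inclusion $\cL(\lambda+\mu)^{\lambda+\mu+\zeta}\subseteq \Psi(\lambda,\mu)((\cL(\lambda)\otimes\cL(\mu))^{\lambda+\mu+\zeta})$, I would use that $\cL(\lambda+\mu)$ is by definition the $\Ap$-span of the iterated Kashiwara images $\tf_{i_1}\cdots\tf_{i_l}v^+_{\lambda+\mu}$, and write each such generator as $\Psi(\lambda,\mu)\bigl(\tf_{i_1}\cdots\tf_{i_l}(v^+_\lambda\otimes v^+_\mu)\bigr)$. The required preimage lies in $(\cL(\lambda)\otimes\cL(\mu))^{\lambda+\mu+\zeta}$ by iterated application of Lemma~\ref{Ka:4.3.2}(i), whose hypotheses are controlled by the $C_{l-1}$ data.

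For the reverse inclusion I would invoke Corollary~\ref{Ka:4.3.6} to decompose
$$(\cL(\lambda)\otimes\cL(\mu))^{\lambda+\mu+\zeta}
=\sum_{i}\tf_i(\cL(\lambda)\otimes\cL(\mu))^{\lambda+\mu+\zeta+\alpha_i}
+v^+_\lambda\otimes\cL(\mu)^{\mu+\zeta}.$$
The $\tf_i$-summands present no issue: by the inductive hypothesis at height $l-1$ their images under $\Psi(\lambda,\mu)$ land in $\tf_i\cL(\lambda+\mu)^{\lambda+\mu+\zeta+\alpha_i}\subseteq\cL(\lambda+\mu)$. It therefore suffices to prove the ``highest weight'' inclusion $\Psi(\lambda,\mu)(v^+_\lambda\otimes\cL(\mu)^{\mu+\zeta})\subseteq\cL(\lambda+\mu)^{\lambda+\mu+\zeta}$, and this is where the hypothesis $\mu\gg 0$ is used.

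The main obstacle is precisely this highest weight piece. My plan is to pass to dual lattices via the $J$-polarization. The adjointness $(\Psi(\lambda,\mu)(w),v)=(w,\Phi'(\lambda,\mu)(v))$ from the corollary following Lemma~\ref{lem:pseudopol} translates the desired lattice inclusion into one of the form $\Phi'(\lambda,\mu)(\cL(\lambda+\mu)^{\lambda+\mu+\zeta,\vee})\subseteq (\text{tensor dual})$, which via $(C_l.3)$ applied at $\lambda+\mu$ and Lemma~\ref{Ka:4.7.2} at $\mu$ reduces to a compatibility between $\wp_{\lambda+\mu}$, $\wp_\mu$, and the embedding $v^+_\lambda\otimes(-)$. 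Writing an arbitrary element of $\cL(\mu)^{\mu+\zeta}$ as $\wp_\mu(u)$ with $u\in\cL(\infty)^\zeta$ (via $(C_l.3)$), the element $\Psi(\lambda,\mu)(v^+_\lambda\otimes\wp_\mu(u))$ is compared with $\wp_{\lambda+\mu}(u)\in\cL(\lambda+\mu)^{\lambda+\mu+\zeta}$; the discrepancy, computed from $\Delta(u)(v^+_\lambda\otimes v^+_\mu)-v^+_\lambda\otimes uv^+_\mu$, only involves $F_i$-factors on the first tensorand and hence belongs to the $\tf_i$-summands already handled. The nontrivial $\Qqp$-coefficients arising in this comparison are controlled by the explicit $f(x)$ of Lemma~\ref{Ka:4.7.1}: for $\mu\gg 0$ the variables $x_i=(\pi_iq_i^2)^{\langle\alpha_i^\vee,\mu\rangle}$ sit in $q\A^\pi$, so these coefficients lie in $\A^\pi$ and produce no obstruction to the lattice inclusion.

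The genuine technical difficulty, beyond Kashiwara's original argument, is the careful bookkeeping of $\pi$-powers: the natural bilinear form on $V(\lambda)\otimes V(\mu)$ is only a $J$-polarization (Lemma~\ref{lem:pseudopol}), which forces the use of both $\Delta$ and $\Delta'$ at adjacent steps and introduces extra $\pi_i^{p(\cdot)}$ factors. These are absorbed correctly thanks to the bar-consistent assumption~\S\ref{subsec:SGCM}(f), which guarantees that the $\tJ_i$-signs appearing through Lemma~\ref{lem:pseudopol} and in the $\Delta$-vs-$\Delta'$ switch are compatible with the $\Ap$-structure of the crystal lattice.
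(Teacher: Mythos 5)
Your outer structure matches the paper's: the decomposition of $(\cL(\lambda)\otimes\cL(\mu))^{\lambda+\mu+\zeta}$ via Corollary~\ref{Ka:4.3.6}, the treatment of the $\tf_i$-summands by the lower-height statement ($C_{l-1}.9$), the easy inclusion via $\tf_{i_1}\cdots\tf_{i_l}(v^+_\lambda\otimes v^+_\mu)$, and the stated intention to use the $J$-polarization adjunction between $\Psi(\lambda,\mu)$ and $\Phi'(\lambda,\mu)$. But the step you actually carry out for the highest-weight piece $v^+_\lambda\otimes\cL(\mu)^{\mu+\zeta}$ --- the only place where $\mu\gg0$ matters --- has a genuine gap. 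You compare $\Psi(\lambda,\mu)(v^+_\lambda\otimes\wp_\mu(u))$ with $\wp_{\lambda+\mu}(u)$ in the primal lattice and claim that the discrepancy, coming from $\Delta(u)(v^+_\lambda\otimes v^+_\mu)$ minus its leading term, ``belongs to the $\tf_i$-summands already handled.'' All one knows about those terms is that they lie in $\bigl(\sum_i F_iV(\lambda)\bigr)\otimes V(\mu)$; nothing guarantees their coefficients are regular at $q=0$, so they need not lie in $\cL(\lambda)\otimes\cL(\mu)$ at all, let alone in $\sum_i\tf_i(\cL(\lambda)\otimes\cL(\mu))^{\lambda+\mu+\zeta+\alpha_i}$. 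Moreover the leading scalar is $\prod_i(\pi_iq_i)^{n_i\ang{\alpha_i^\vee,\lambda}}$ --- it involves $\lambda$, not $\mu$ --- a nonnegative power of $q$ up to sign; to solve for $\Psi(v^+_\lambda\otimes uv^+_\mu)$ you would have to divide by it, which is incompatible with lattice membership unless you already know the other side is divisible by that power of $q$, which is exactly what is not known. Your appeal to Lemma~\ref{Ka:4.7.1} with ``$x_i\in q\A^\pi$'' does not repair this: that lemma feeds into Lemma~\ref{Ka:4.7.2}, and the coefficients appearing in your comparison are not the ones it controls.

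The paper never leaves the dual pairing at this step, and that is precisely what makes the argument work. One takes $u\in\cL(\lambda+\mu)^{\lambda+\mu+\zeta,\vee}$, writes $u=Pv^+_{\lambda+\mu}$ with $P\in\cL(\infty)^{\zeta,\vee}$ (Lemma~\ref{Ka:4.7.2}, where $\mu\gg0$ enters), and computes $\Delta'(P)\equiv\prod(\tJ_i\tK_i)^{n_i}\otimes P$ modulo $\bigl(\sum_i F_i\UU^{\leq0}\bigr)\otimes\UU^-$. The troublesome terms then vanish for free: they pair to zero against $v^+_\lambda\otimes\cL(\mu)$ by the weight orthogonality (\ref{eqn:polweight}), so one never needs them to lie in any lattice. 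And the scalar $\prod_i(\pi_iq_i)^{n_i\ang{\alpha_i^\vee,\lambda}}$ multiplies a quantity $(Pv^+_\mu,\cL(\mu)^{\mu+\zeta})\subseteq\A$ --- again by Lemma~\ref{Ka:4.7.2} at $\mu$ --- so membership in $\A$ suffices and no division is required; the inclusion $\Psi(\lambda,\mu)\bigl((\cL(\lambda)\otimes\cL(\mu))^{\lambda+\mu+\zeta}\bigr)\subseteq\cL(\lambda+\mu)^{\lambda+\mu+\zeta}$ then follows by pairing against the whole dual lattice. In short, dualizing was the right instinct, but the concrete primal comparison you substituted for it fails at exactly the two points the dual argument is designed to avoid.
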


\begin{proof}
In this proof, we shall need $\Phi'(\lambda,\mu)$ instead of $\Phi(\lambda,\mu)$ as in \cite{Ka}.

We have
$\wp_{\lambda+\mu}(\cL(\infty)^\zeta)=\cL(\lambda+\mu)^{\lambda+\mu+\zeta}$
and $\wp_\mu(\cL(\infty)^{\zeta,\vee})=\cL(\mu)^{\mu+\zeta,\vee}$.
We have
\[(\cL(\lambda)\otimes \cL(\mu) )^{\lambda+\mu+\zeta}
=\sum \tf_i(\cL(\lambda)\otimes \cL(\mu))^{\lambda+\mu+\zeta+\alpha_i}+v^+_\lambda\otimes \cL(\mu)^{\mu+\zeta}.\]

Now for $u\in \cL(\lambda+\mu)^{\lambda+\mu+\zeta,\vee}$, we have
\begin{align*}
(\Phi'(\lambda,&\mu)(u),\tf_i(\cL(\lambda)\otimes \cL(\mu))^{\lambda+\mu+\zeta+\alpha_i})\\
&=(u, \tf_i\Psi(\lambda,\mu)(\cL(\lambda)\otimes \cL(\mu))^{\lambda+\mu+\zeta+\alpha_i})\\
&\subset (u, \tf_i \cL(\lambda+\mu)^{\lambda+\mu+\zeta+\alpha_i})\subset \A.
\end{align*}

Let $u=Pv^+_{\lambda+\mu}$ with $P\in \cL(\infty)^{\zeta,\vee}$. Then for
$\zeta=-\sum n_i \alpha_i$, we have

\[\Delta'(P)=\parens{\prod (\tJ_i\tK_i)^{n_i}} \otimes P \text{ mod }\parens{\sum F_i \UU^{\leq 0}}\otimes \UU^-.\]

Therefore we have

\[\Phi'(\lambda,\mu)(Pv^+_{\lambda+\mu})=\parens{\prod (\pi_i q_i)^{n_i\ang{\alpha_i^\vee, \lambda}}}
 v^+_\lambda\otimes Pv^+_\mu\text{ mod } \parens{\sum F_i V(\lambda)}
\otimes V(\mu)\]

and thus
\[(\Phi'(\lambda,\mu)(u), v^+_\lambda\otimes \cL(\mu))\subseteq\parens{\prod (\pi_i q_i)^{n_i\ang{\alpha_i^\vee, \lambda}}}
 (Pv^+_\mu, \cL(\mu)^{\mu+\zeta})\subset \A.
\]

So we have shown
\[
(\cL(\lambda+\mu)^{\lambda+\mu+\zeta,\vee},\Psi(\lambda,\mu)(\cL(\lambda)\otimes \cL(\mu))^{\lambda+\mu+\zeta})\subset \A
\]
and thus $\Psi(\lambda, \mu)(\cL(\lambda)\otimes \cL(\mu)^{\lambda+\mu+\zeta})
\subset \cL(\lambda+\mu)^{\lambda+\mu+\zeta}$.
Since $\Psi(\lambda,\mu)\circ \Phi(\lambda,\mu)$ is the identity,
the reverse inclusion also holds.
\end{proof}

The other parts of Kashiwara's grand loop argument works  equally well in our current setting.
Summarizing, Kashiwara's grand loop argument in \cite[\S4]{Ka} with the above modifications
goes through, and we have established ($C_l.1$)-($C_l.14$).

Clearly the validity of ($C_l.1$)-($C_l.14$) implies Theorems~\ref{th:CrBinf}-\ref{th:CrBinfla}
in \S \ref{sec:thmcrystal} for $\diamond =-$.
Also these three theorems in \S \ref{sec:thmcrystal} hold for $\diamond =+$ by \cite{Ka}.
The crystal bases involved in Theorems~\ref{th:CrBinf}-\ref{th:CrBinfla} are formally defined by the same formulas.
Hence we can lift the crystal bases in the cases $\diamond=\pm$ to a formal parameter $\pi$ with $\pi^2=1$,
and we conclude that Theorems~\ref{th:CrBinf}-\ref{th:CrBinfla} hold in the case $\diamond =\emptyset$ as well.

\section{Properties of polarization} \label{sec:proppol}

Let us  examine more closely the properties of the polarizations
on $\cL(\lambda)$ and   on $\cL(\infty)$.

 \subsection{The $\pi$-orthonormality of polarizations}

Recall the notion of ($\pi$-orthonormal) $\pi$-basis from Definition~\ref{dfn:pibasis}.

\begin{prop}\label{prop:polprops}
For $\diamond \in \{+,-\}$, let either  $(1)$ $V =\UU^-_\diamond$ and $(\cL,B)= (\cL(\infty)_\diamond, B(\infty)_\diamond)$
or  $(2)$ $V=V(\la)_\diamond$ and $(\cL,B)= (\cL(\lambda)_\diamond,B(\lambda)_\diamond)$ for $\lambda\in P^+$. Let $(\cdot,\cdot)$
be the polarization on $V$ given in \S \ref{subsec:bilformU-} or \S \ref{subsec:polar}, accordingly.
Then,
\begin{enumerate}
\item[(i)] $(\cL,\cL)\subseteq \A$, and so it descends to a bilinear form
            \[(\cdot,\cdot)_0:\cL/q\cL\times \cL/q\cL\rightarrow \Q,\quad (x+q\cL,y+q\cL)_0=(x,y)|_{q=0}.\]

\item[(ii)] $(\tf_i u,v)_0=\pi_i^{\epsilon_i(u)}(u,\te_iv)_0$ for $u,v\in \cL/q\cL$
(Here $\pi$ is understood as $1$ or $-1$ for $\diamond \in \{+,-\}$, respectively).

\item[(iii)] $B$ is a $\pi$-orthonormal
             $\pi$-basis of $\cL/q\cL$ with respect to $(\cdot,\cdot)_0$.

\item[(iv)] $\cL=\set{u\in V; (u,\cL)\subseteq \A}$.
\end{enumerate}
\end{prop}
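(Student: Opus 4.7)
The plan is to establish (i)--(iv) simultaneously by induction on the height of the weight $\zeta$, adapting Kashiwara's strategy \cite[\S4.5--4.6]{Ka} to the super setting. In both cases $V=\UU^-_\diamond$ and $V=V(\lambda)_\diamond$, every element admits a unique decomposition $u = \sum_{n \ge 0} F_i^{(n)} u_n$ with $u_n \in \ker E_i'$ (respectively $\ker E_i$), and by Lemma~\ref{Ka:4.3.1} the components $u_n$ lie in $\cL$ whenever $u$ does. This reduces the analysis to a rank-one computation that is then propagated through the induction on height.

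The central rank-one input is an orthogonality/normalization formula of the form
\[
(F_i^{(n)} u_n,\, F_i^{(m)} v_m) \;=\; \delta_{nm}\, c_n\, (u_n, v_n),
\]
where $c_n \in \A$ satisfies $c_n \equiv \pi_i^{\binom{n}{2}} \pmod{q\A}$. The orthogonality for $n \ne m$ comes from iterated use of adjunction: in $\UU^-$, the relation $(f_i y, z) = (y, e_i z)$ of Proposition~\ref{prop:Ubilprod} lets us move all $F_i$'s onto the right, whereupon $e_i^{|m-n|}$ kills $u_n$ or $v_m$; in $V(\lambda)$, the same strategy works via $\tau_1(F_i) = q_i^{-1} \tK_i E_i$. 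The normalization $c_n$ is computed from the $(q,\pi)$-boson identity $e_i^n f_i^{(m)} = \sum_t (\pi_i q_i)^{\binom{t+1}{2}-nm} q_i^{-(n-t)(m-t)} \bbinom{n}{t}_i f_i^{(m-t)} e_i^{n-t}$, together with the elementary observation $[n]_i! = q_i^{-\binom{n}{2}} U_n$ for a unit $U_n$ with $U_n \equiv 1 \pmod q$: the $q_i^{-\binom{n}{2}}$ factors cancel, leaving precisely $\pi_i^{\binom{n}{2}}$ at $q=0$. In the module case, the additional $\tK_i^n$ arising from $\tau_1(F_i^{(n)}) = q_i^{-n^2} \tK_i^n E_i^{(n)}$ combines with the $(q,\pi)$-binomial $E_i^{(n)}F_i^{(n)} v_n$ (computed using Lemma~\ref{Ka:3.4.6}) to yield the same residue.

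From the key formula the four items follow: (i) by induction on height, since $(\cL^\zeta, \cL^\zeta)$ reduces to $c_n$ times $(\cL^{\zeta+n\alpha_i}, \cL^{\zeta+n\alpha_i}) \subseteq \A$; (ii) by expanding both $(\tf_i u, v)_0$ and $(u, \te_i v)_0$ via the decomposition and using $\binom{n+1}{2} - \binom{n}{2} = n = \varepsilon_i(u)$, which delivers exactly the factor $\pi_i^{\varepsilon_i(u)}$; (iii) by applying the key formula to $b = \tf_{i_1} \cdots \tf_{i_t}(\mathbf{1} \text{ or } v^+_\lambda)$ with induction on $t$, where weight-space orthogonality handles distinct strings. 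Finally, (iv) reduces via (i) to showing that $\cL^\vee := \{u \in V : (u, \cL) \subseteq \A\}$ is contained in $\cL$: fix a basis $B^0 \subseteq B$ of the free $\A$-module $\cL$; by (iii) the Gram matrix of $B^0$ reduces modulo $q\A$ to a diagonal matrix with entries in $\{\pm 1\}$ and is therefore invertible over the local ring $\A$, so any $u \in \cL^\vee$ expands with $\A$-coefficients on $B^0$ and thus lies in $\cL$. The hardest step will be the rank-one formula in the module case: tracking the $q$-powers from $q_i^{-n^2}$, from $\tK_i^n$ acting as $q_i^{n \ang{\alpha_i^\vee, |v_n|}}$, and from the $(q,\pi)$-binomial coefficient $\bbinom{\ang{\alpha_i^\vee,|v_n|}}{n}_i$, and verifying that all these combine to leave precisely $\pi_i^{\binom{n}{2}}$ at $q=0$, requires a careful application of the bar-consistency conditions (d) and (f) from \S\ref{subsec:SGCM}.
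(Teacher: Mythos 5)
Your proposal is correct and follows essentially the same route as the paper: both reduce (i)--(ii) to the rank-one pairing of divided powers, whose value at $q=0$ is $\delta_{nm}\,\pi_i^{\binom{n}{2}}(u_0,v_0)$, obtain (iii) by induction along the $\te_i$, and get (iv) from invertibility over the local ring $\A$ of the Gram matrix (your phrasing is equivalent to the paper's minimal-exponent contradiction argument). The only slip is citing Lemma~\ref{Ka:3.4.6} for the module case: that lemma concerns $E_i''$ acting on $\UU^-$, whereas what is needed is the standard rank-one identity expressing $E_i^{(m)}F_i^{(m)}u_0$ (with $E_iu_0=0$) as a power of $\pi_i$ times $\bbinom{\ang{\alpha_i^\vee,\lambda+\zeta}+2m}{m}_i u_0$, after which the residue $\pi_i^{\binom{m}{2}}$ follows by pure bookkeeping of $q_i$- and $\pi_i$-powers, without needing conditions (d) or (f) of \S\ref{subsec:SGCM}.
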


\begin{proof}
For notational simplicity and certainty,
we will prove the case (2) in detail, while the case (1) is entirely similar.

The same easy reduction as in the proof of  \cite[Proposition~5.1.1]{Ka} reduces the proof of Parts (i) and (ii)
to the verification
of the following identity
\begin{equation}  \label{eq:adjoint}
(\tf_i u, v) \equiv \pi_i^{\epsilon_i(u)} (u, \te_i v) \quad \mod q\A
\end{equation}
where $u=F_i^{(n)}u_0 \in \cL(\la)_\diamond^{\la+\zeta+\alpha_i}, v=F_i^{(m)}v_0 \in \cL(\la)_\diamond^{\la+\zeta}$
with $E_i u_0 =E_iv_0 =0$.

To that end, we have the following computation (compare \cite[(5.1.2)]{Ka}):
\begin{align*}
(F_i^{(n+1)}u_0, F_i^{(m)}v_0)
&=\delta_{n+1,m} q_i^{m\ang{\alpha_i^\vee, \lambda+\zeta}+m^2}(E_i^{(m)}F_i^{(m)}u_0, v_0)\\
&=\delta_{n+1,m} \pi_i^{m^2+\binom{m+1}{2}}q_i^{m\ang{\alpha_i^\vee, \lambda+\zeta}+m^2}
\bbinom{\ang{\alpha_i^\vee, \lambda+\zeta}+2m}{m}(u_0, v_0)\\
&\equiv \delta_{n+1,m}\pi_i^{\binom{m}{2}}(u_0,v_0) \mod q \A,
\end{align*}
where we have used
$\binom{m+1}{2}+m^2\equiv \binom{m}{2} \mod 2.$
Therefore $(\tf_i u, v)_0=\pi_i^{m-1}(u,\te_i v)_0$.
Since $m-1=n=\epsilon_i(u)$, the identity~\eqref{eq:adjoint} follows, and whence (i) and (ii).

Part (iii) follows by induction on weights and using Theorem~\ref{th:CrBla} from the identity
$$(b,b')_0 =(\tf_i \te_i b, b')_0 =(\te_ib, \te_ib')_0,
$$
where $b,b' \in B$ and $i \in I$ is chosen such that $\te_i b \in B$.

To prove (iv), it remains to verify that $\set{u\in V; (u,\cL)\subseteq \A} \subseteq \cL$ thanks to (i).
Denote $\sgn(b)=(b,b)_0$.
Suppose $u\in V$ is a $\mu$-weight vector such that
$(u,\cL)\subseteq \A$. By Theorem~\ref{th:CrBla}  and the definition of crystal basis and $\pi$-basis,
one can find $B_\mu^0 \subset B_\mu$ which is an honest basis for $\cL_\mu/ q\cL_\mu$.
Then $u$ can be written
as $u=\sum_{b\in B^0_\mu} c_b u_b$. where 
 $u_b+q\cL=b$ and $c_b\in \Qq$.
Assume $u\notin \cL$.
Then there  exists a minimal $r\in \Z_{> 0}$
such that $q^r c_b\in \A$ for all $b\in B_\mu$.
Since $(u,\cL)\subset \A$, we have in particular that $(u,\sgn(b)q^{r-1}u_b)\in \A$.
On the other hand, since $(u_b, u_{b'})\in q\A$ for $b\neq b'$,
we compute that $(u, \sgn(b)q^{r-1}u_b)\in q^{r-1}c_b+\A $ for all $b$,
whence $q^{r-1}c_b \in \A$ for all $b$, contradicting
the minimality of $r$. This completes the proof of the proposition.
\end{proof}

\begin{rmk}  \label{rmk:nonchar}
\begin{enumerate}
\item
One can formulate a version of Proposition~\ref{prop:polprops} with $\diamond =\emptyset$
and the bilinear form $(\cdot,\cdot)_0$ taking value in $\Qp$.

 \item
In contrast to the usual quantum group setting in \cite{Ka, Lu2}, $(\cdot,\cdot)_0$ here is not positive
definite in general, as it could happen that $(b,b)_0 =\pi$ for some crystal basis element.
In particular, the well-known characterization in the usual quantum group setting
that an element $u$ lies in the crystal lattice if and only if $(u,u)\in \A$ fails in our super setting in general;
see Example~\ref{ex:nonchar}.
\end{enumerate}
\end{rmk}

\begin{example}  \label{ex:nonchar}
Let $\UU$ be the quantum $\osp(1|4)$,
with $\alpha_1$ the short root and $\alpha_2$ the long root.
Then $\tf_1^4\tf_2\cdot 1=F_1^{(4)}F_2$ and
$\tf_1^{3}\tf_2\tf_1\cdot 1=F_1^{(3)}(F_2F_1-q^2F_1F_2)+q^2 F_1^{(4)}F_2$
(these will be canonical basis elements as developed in Section~\ref{sec:CB}).
A direct computation shows that
$$
\big(F_1^{(3)} (F_2F_1-q^2F_1F_2), F_1^{(4)}F_2 \big) =0,
$$
and also
\begin{align*}
\big(F_1^{(4)}  F_2,F_1^{(4)}F_2 \big)
&=(\pi q)^6([4]^!)^{-1} \in 1+q^2\Z^\pi [[q]],
\\
\big(F_1^{(3)}  (F_2F_1-q^2F_1 & F_2), F_1^{(3)}(F_2F_1-q^2F_1F_2) \big)
 \\
&=(\pi q)^3([3]^!)^{-1}(1-q^4) \in \pi+q^2\Z^\pi [[q]].
\end{align*}
It follows that
\begin{enumerate}
\item $\big(\tf_1^4\tf_2\cdot 1, \tf_1^4\tf_2\cdot 1 \big)=1\mod q^2\Z^\pi [[q]]$;
\item $\big(\tf_1^3\tf_2\tf_1\cdot 1, \tf_1^3\tf_2\tf_1\cdot 1\big)=\pi \mod q^2\Z^\pi [[q]]$;
\item $(\tf_1^4\tf_2\cdot 1, \tf_1^3\tf_2\tf_1\cdot 1)=q^2 \mod q^4\Z^\pi [[q]]$.
\end{enumerate}

Now (1) and (2) provide us an example that {\em the squared norm of (canonical basis) elements in $B(\infty)$ of the same weight
do not have uniform sign}. Combined with (3), this implies that $\cL(\infty)\subsetneq \set{u\in U^-|(u,u)\in \A}$,
since $q^{-1}(1-\pi)(\tf_1^4\tf_2\cdot 1 + \tf_1^3\tf_2\tf_1\cdot 1)$ belongs to the right-hand side, but not to $\cL(\infty)$.
\end{example}

 \subsection{Polarization and $\varrho$}

 We simply formulate the counterpart of \cite[\S 5.2]{Ka}.

 Recall from \cite[\S 2.1.2]{CHW}
 an algebra automorphism $\omega$ on $\UU$ and an algebra anti-involution $\sigma$ on $\UU$ (i.e. $\sigma(xy)=\sigma(y)\sigma(x)$).
 Note that $\sigma$ fixes each $E_i$ but not $F_i$ (a super phenomenon).
 In particular $\varrho:=\omega \sigma \omega^{-1}$ is a (non-super) algebra anti-involution on $\UU$ such that
\begin{equation}  \label{eq:anti inv}
 \varrho (F_i) =F_i, \quad \varrho(E_i)=\pi_i \tJ_i E_i, \quad \varrho(K_\mu)=K_{-\mu}, \quad \varrho(J_\mu) =J_\mu
 \quad (i\in I, \mu \in P^\vee).
\end{equation}
We start with some lemmas.

\begin{lem}
For $i,j \in I$ and $P,Q \in \UU^-$, we have
\begin{align}
 \label{eq:ELR}
 (\Ad( \tK_i) E_i'') \circ E_j' & =\pi^{p(i)p(j)} E_j' \circ (\Ad(\tK_i) E_i''),
 \\
\label{eq:Radj}
 (PF_i, Q)  &=  \pi_i^{p(P)} (P, \Ad(\tK_i) E_i'' Q).
\end{align}
\end{lem}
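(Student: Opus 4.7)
The plan is to derive \eqref{eq:ELR} from Proposition~\ref{prop:qcommute} by an $i\leftrightarrow j$ swap, and then to deduce \eqref{eq:Radj} from \eqref{eq:ELR} by induction on the height of $P$.

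For \eqref{eq:ELR}, I first make explicit the meaning of $\Ad(\tK_i)\circ E_i''$ as an operator on $\UU^-$: since $\Ad(\tK_i)$ acts on $\UU^-_\mu$ by the scalar $q^{d_i\langle\alpha_i^\vee,\mu\rangle}$ and $E_i''$ raises weight by $\alpha_i$, the composite operator sends $y\in \UU^-_\nu$ to $q^{d_i\langle\alpha_i^\vee,\nu+\alpha_i\rangle}E_i''(y)$. Evaluating both sides of \eqref{eq:ELR} on a homogeneous $y\in \UU^-_\nu$ and cancelling the common scalar $q^{d_i\langle\alpha_i^\vee,\nu+\alpha_i\rangle}$ reduces the identity to
\[
E_i''\,E_j' \;=\; \pi^{p(i)p(j)}\,q_i^{-a_{ij}}\,E_j'\,E_i''.
\]
Interchanging the roles of $i$ and $j$ in Proposition~\ref{prop:qcommute} yields $E_j'E_i''=\pi^{p(i)p(j)}q_i^{a_{ij}}E_i''E_j'$ (using the symmetry $d_ia_{ij}=d_ja_{ji}$), and solving this for $E_i''E_j'$ together with $\pi^2=1$ gives the displayed identity. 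Thus the purpose of the $\Ad(\tK_i)$ twist is precisely to absorb the $q$-power $q_i^{-a_{ij}}$, so that $E_j'$ supercommutes with $\Ad(\tK_i)E_i''$ up to a pure $\pi$-sign.

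For \eqref{eq:Radj}, I induct on $\height(-|P|)$. In the base case $P=1$, both sides vanish unless $|Q|=-\alpha_i$ by weight considerations; when $|Q|=-\alpha_i$, the element $Q$ is a scalar multiple of $F_i$, and Lemma~\ref{lem:E'Fcomms} shows that $E_i'(Q)=E_i''(Q)$ lies in $\UU^-_0$, so $\Ad(\tK_i)$ acts trivially on $E_i''(Q)$. Proposition~\ref{prop:Ubilprod} then gives
\[
(F_i,Q)=(f_i\cdot 1,Q)=(1,E_i' Q)=(1,\Ad(\tK_i) E_i'' Q).
\]
For the inductive step, write $P=F_j P'$ with $P'$ of strictly smaller height. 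Two applications of Proposition~\ref{prop:Ubilprod} give
\[
(PF_i,Q)=(P'F_i,E_j' Q),\qquad (P,\Ad(\tK_i)E_i'' Q)=(P',E_j'\,\Ad(\tK_i)E_i''\,Q),
\]
and the inductive hypothesis rewrites the first as $\pi_i^{p(P')}(P',\Ad(\tK_i)E_i''(E_j' Q))$. Using $\pi_i^{p(P)}=\pi_i^{p(j)}\pi_i^{p(P')}=\pi^{p(i)p(j)}\pi_i^{p(P')}$, the desired equality \eqref{eq:Radj} reduces to
\[
\Ad(\tK_i)E_i''(E_j' Q)=\pi^{p(i)p(j)}E_j'\,\Ad(\tK_i)E_i''\,Q,
\]
which is precisely \eqref{eq:ELR} evaluated on $Q$.

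The substantive input is just Proposition~\ref{prop:qcommute}; everything else is formal manipulation. The main obstacle is parity bookkeeping, in particular tracking the factor $\pi_i^{p(j)}=\pi^{p(i)p(j)}$ that arises each time $E_j'$ is moved past $\Ad(\tK_i)E_i''$, and verifying that the $\Ad(\tK_i)$ twist exactly cancels the $q$-power produced by the $i\leftrightarrow j$ swap of Proposition~\ref{prop:qcommute}.
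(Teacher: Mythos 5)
Your proposal is correct and follows essentially the same route as the paper: \eqref{eq:ELR} is obtained as a reformulation of the $q$-commutativity in Proposition~\ref{prop:qcommute} (the $\Ad(\tK_i)$ twist absorbing the power $q_i^{-a_{ij}}$), and \eqref{eq:Radj} is then proved by the same induction on the height of $P$ using the adjunction $(F_jy,z)=(y,E_j'z)$ of Proposition~\ref{prop:Ubilprod} together with \eqref{eq:ELR} — this is exactly the argument of Kashiwara's Lemma~5.2.2 that the paper invokes, with the correct $\pi$-bookkeeping $\pi_i^{p(j)}=\pi^{p(i)p(j)}$.
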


\begin{proof}
The identity \eqref{eq:ELR} is a reformulation of the $q$-commutativity of $E_i''$ and $E_j'$ in Proposition~\ref{prop:qcommute}.
Then \eqref{eq:Radj} is proved in the same way as in \cite[Lemma 5.2.2]{Ka}, modified by \eqref{eq:ELR} above.
\end{proof}

We shall also write $\varrho(Q)$ as $Q^\varrho$ for $Q \in \UU^-$.
\begin{lem}
We have $\big( E_i' (Q^\varrho) \big)^\varrho = \pi_i^{p(Q)+1} \Ad (\tK_i)E_i''Q$, for $Q \in \UU^-$.
\end{lem}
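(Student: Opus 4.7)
The plan is to apply the anti-involution $\varrho$ to the defining equation \eqref{eq:E'} for $E_i',E_i''$ evaluated at $Q$, and then compare with \eqref{eq:E'} applied to $Q^\varrho$, using the triangular decomposition of $\UU$ to extract coefficients. By $\Qqp$-linearity we may assume $Q\in\UU^-$ has pure weight $\mu=|Q|$.

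First I would compute $\varrho([E_i,Q])$ in two different ways. Using $\varrho(E_i)=\pi_i\tJ_iE_i$ together with the facts that $\varrho$ is an anti-involution preserving both $\UU^-$ and parity (so that $p(Q^\varrho)=p(Q)$), a short manipulation gives
\[
\varrho([E_i,Q])=-\pi_i\pi^{p(i)p(Q)}\tJ_i[E_i,Q^\varrho],
\]
which by \eqref{eq:E'} applied to $Q^\varrho$ becomes a $\Qqp$-linear combination of $\tK_iE_i''(Q^\varrho)$ and $\tJ_i\tK_i^{-1}E_i'(Q^\varrho)$. On the other hand, applying $\varrho$ term-wise to the right-hand side of \eqref{eq:E'} (using $\varrho(\tK_i)=\tK_i^{-1}$ and $\varrho(\tJ_i)=\tJ_i$) and commuting the Cartan factors $\tJ_i\tK_i^{-1}$ and $\tK_i$ past the weight-$(\mu+\alpha_i)$ elements $\varrho(E_i''(Q))$ and $\varrho(E_i'(Q))$ produces a combination of the same two Cartan monomials, now carrying explicit $(\pi_iq_i)^{\langle\alpha_i^\vee,\mu\rangle+2}$ and $q_i^{-\langle\alpha_i^\vee,\mu\rangle-2}$ scalar factors.

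Since the triangular decomposition $\UU\cong\UU^-\otimes\UU^0\otimes\UU^+$ makes the $\tJ_i\tK_i^{-1}$- and $\tK_i$-components linearly independent, I would equate coefficients. Matching the $\tJ_i\tK_i^{-1}$-coefficient yields an equation expressing $E_i'(Q^\varrho)$ as a scalar multiple of $\varrho(E_i''(Q))$; applying $\varrho$ once more to this equation (it is $\Qqp$-linear and an involution on $\UU$) and absorbing the resulting $q_i^{\langle\alpha_i^\vee,\mu\rangle+2}$ into $\Ad(\tK_i)$ rearranges it into
\[
\varrho(E_i'(Q^\varrho))=\pi_i^{p(Q)+1}\pi_i^{\langle\alpha_i^\vee,\mu\rangle}\Ad(\tK_i)E_i''(Q).
\]

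The remaining obstacle, and the only genuinely super feature, is to verify the identity $\pi_i^{\langle\alpha_i^\vee,\mu\rangle}=1$. This is automatic when $i\in\ev I$, while for $i\in\od I$ it follows from SGCM condition~(d) in \S\ref{subsec:SGCM}, which forces $a_{ik}\in 2\Z$ for every $k\in I$ and hence $\langle\alpha_i^\vee,\mu\rangle\in 2\Z$ for every $\mu\in Q^-$. The $\tK_i$-coefficient from the same matching produces the analogous identity with $Q$ replaced by $Q^\varrho$, which is equivalent to the lemma by the involutivity of $\varrho$ and serves as a consistency check.
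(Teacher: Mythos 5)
Your argument is correct and is essentially the paper's own proof: both apply $\varrho$ to the defining identity \eqref{eq:E'}, use $\varrho(E_i)=\pi_i\tJ_i E_i$ together with the parity bookkeeping for the supercommutator, and then compare with \eqref{eq:E'} evaluated at $Q^\varrho$, extracting the two Cartan components by the same uniqueness that defines $E_i'$ and $E_i''$. The only cosmetic difference is that the paper first rewrites \eqref{eq:E'} with the Cartan factors packaged as $\Ad(\tK_i^{\pm 1})$ (and uses the centrality of $\tJ_i$, so no explicit commutation scalars appear), whereas you commute $\tJ_i\tK_i^{\pm1}$ past the weight vectors by hand and then discard the resulting $\pi_i^{\langle\alpha_i^\vee,\mu\rangle}$, which is indeed $1$ by condition (d), exactly as you observe.
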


\begin{proof}
We can rewrite \eqref{eq:E'} as
$$
[E_i, Q] =\frac{\tJ_i (\Ad (\tK_i) E_i''Q) \tK_i-((\Ad \tK_i^{-1}) E_i'Q) \tK_i^{-1}}{\pi_i q_i-q_i^{-1}}.
$$
Applying $\varrho$ (see \eqref{eq:anti inv}) to the above identity gives us
$$
[Q^\varrho, E_i^\varrho] =\frac{\tK_i^{-1} \tJ_i (\Ad (\tK_i) E_i''Q)^\varrho - \tK_i ((\Ad \tK_i^{-1}) E_i'Q)^\varrho}{\pi_i q_i-q_i^{-1}}.
$$
Using $[Q^\varrho, E_i^\varrho] =-\pi_i^{p(Q)}[\pi_i \tJ_i E_i, Q^\varrho]$ and noting $\tJ_i$ is central, we rewrite the above as
$$
[E_i, Q^\varrho] = \pi_i^{p(Q)+1}
\frac{\tJ_i\tK_i ( (\Ad \tK_i^{-1}) E_i'Q)^\varrho -\tK_i^{-1} (\Ad (\tK_i) E_i''Q)^\varrho}{\pi_i q_i-q_i^{-1}}.
$$
A comparison with \eqref{eq:E'} (by setting $y=Q^\varrho$) gives us the desired formula.
\end{proof}

Now we are ready to show that  $\varrho$ is an isometry (without signs).
\begin{prop}
\label{prop:isometry}
For $P, Q \in \UU^-$, we have
\begin{equation}  \label{eq:isometry}
(P^\varrho, Q^\varrho) =(P,Q).
\end{equation}
\end{prop}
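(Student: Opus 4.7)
The plan is to prove \eqref{eq:isometry} by induction on the common height of the weights of $P$ and $Q$. By bilinearity, I may reduce to the case where $P$ and $Q$ are homogeneous, and since the polarization vanishes on vectors of different weights while $\varrho$ preserves the weight grading on $\UU^-$ (because $\varrho(F_i) = F_i$), both sides are zero unless $|P|=|Q|$. The base case of height zero is immediate, as $\UU^-_0 = \Qqp$ and $\varrho$ is $\Qqp$-linear.

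For the inductive step, write $P = F_i P'$ for some $i \in I$ and some $P' \in \UU^-$ of strictly smaller height. Because $\varrho$ is an anti-involution fixing $F_i$, we have $P^\varrho = {P'}^\varrho F_i$. Applying the adjunction formula \eqref{eq:Radj} to move $F_i$ across the form, then rearranging the preceding lemma (by substituting $Q$ for $Q^\varrho$ and using $\varrho^2 = 1$, $\pi_i^2=1$) to express $\Ad(\tK_i) E_i''(Q^\varrho) = \pi_i^{p(Q)+1}(E_i'Q)^\varrho$, I obtain
\begin{equation*}
(P^\varrho, Q^\varrho) = \pi_i^{p(P')}({P'}^\varrho, \Ad(\tK_i)E_i''(Q^\varrho)) = \pi_i^{p(P')+p(Q)+1}({P'}^\varrho, (E_i'Q)^\varrho).
\end{equation*}
The pair $(P', E_i'Q)$ has common height strictly less than that of $(P, Q)$, so the induction hypothesis yields $({P'}^\varrho, (E_i'Q)^\varrho) = (P', E_i'Q)$. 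Combined with the defining adjointness $(P', E_i'Q) = (F_iP', Q) = (P, Q)$, this gives
\begin{equation*}
(P^\varrho, Q^\varrho) = \pi_i^{p(P')+p(Q)+1}(P, Q).
\end{equation*}

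It remains to verify the sign $\pi_i^{p(P')+p(Q)+1}$ is trivial whenever $(P,Q)\neq 0$. In that case $|P|=|Q|$, which (since parity is determined by weight on $\UU^-$) forces $p(P) = p(Q)$, hence $p(P') \equiv p(Q) + p(i) \pmod 2$. Thus
\begin{equation*}
\pi_i^{p(P')+p(Q)+1} = \pi_i^{p(i)+1} = \pi^{p(i)(p(i)+1)} = 1,
\end{equation*}
since $p(i)(p(i)+1)$ is always even. Most of the real work is already carried by the two preliminary lemmas; the one delicate point is this final sign cancellation, which I expect to be the main obstacle and relies crucially on the parity constraint imposed by the weight-grading of the polarization together with Conditions (d) and (f) on the SGCM.
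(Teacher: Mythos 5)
Your argument is correct and is essentially the paper's proof in mirrored form: you peel $F_i$ off the left of $P$ and apply \eqref{eq:Radj}, the rearranged lemma for $\big(E_i'(Q^\varrho)\big)^\varrho$, the induction hypothesis, and the defining adjointness $(F_iP',Q)=(P',E_i'Q)$, whereas the paper peels $F_i$ off the right and runs the same four ingredients in the opposite order. The sign bookkeeping via $p(P)=p(Q)$ for weight reasons is fine (though note the final cancellation $\pi_i^{p(i)+1}=1$ is automatic and does not actually invoke Conditions (d) and (f)).
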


\begin{proof}
The claim is trivial when $P=1$, and so it suffices to prove that the identity \eqref{eq:isometry} implies that
$$
((PF_i)^\varrho, Q^\varrho) =(PF_i,Q).
$$
We will assume without loss of generality that $p(Q)=p(PF_i) =p(P)+1$, as otherwise both sides are clearly equal to $0$.

\begin{align*}
((PF_i)^\varrho, Q^\varrho)
&= (F_i P^\varrho, Q^\varrho) =(P^\varrho, E_i' Q^\varrho)
 \\
&= (P, (E_i' Q^\varrho) ^\varrho) = \pi_i^{p(Q)+1} (P,   \Ad (\tK_i)E_i''Q)
 \\
&= \pi_i^{p(Q)+1} \pi_i^{p(P)} (P F_i,   Q) =(P F_i, Q).
\end{align*}
This completes the proof.
\end{proof}

The fact
that $\varrho (\cL(\infty)_+) =\cL(\infty)_+$ follows easily from \eqref{eq:isometry} and the orthonormality characterization of crystal
lattice  in the standard quantum group setting \cite{Lu1, Lu2, Ka}.
While such orthonormality characterization fails in our super setting as noted in Remark~\ref{rmk:nonchar}(2),
 the $\varrho$-stable property of the crystal lattice remains to be true.  
 
\begin{prop}    \label{prop:L*=L}
For $\diamond \in \{\emptyset, -\}$, we have $\varrho (\cL(\infty)_\diamond) =\cL(\infty)_\diamond.$
\end{prop}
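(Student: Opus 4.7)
The cleanest strategy would be to imitate Kashiwara's original argument in the even setting, where $\varrho$-stability of $\cL(\infty)_+$ is extracted from two inputs: the isometry property of $\varrho$, and the characterization $\cL(\infty)_+=\set{u\in\UU^-_+ \mid (u,u)\in\A}$. The first input is in hand via Proposition~\ref{prop:isometry}, which shows $(P^\varrho,Q^\varrho)=(P,Q)$ on $\UU^-$, and it would pass through to the specializations $\UU^-_\pm$. The second input, however, is unavailable in our super setting: as noted in Remark~\ref{rmk:nonchar}(2), the polarization at $q=0$ is not positive definite, and Example~\ref{ex:nonchar} exhibits a concrete element $q^{-1}(1-\pi)(\tf_1^4\tf_2\cdot 1+\tf_1^3\tf_2\tf_1\cdot 1)$ of finite norm that nevertheless lies outside $\cL(\infty)$. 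So a na\"ive combination of isometry with a norm characterization is blocked from the outset, and one must look elsewhere.

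A second line of attack would be to introduce \emph{right-handed} Kashiwara operators $\tf_i^\varrho:=\varrho\circ\tf_i\circ\varrho$ and $\te_i^\varrho:=\varrho\circ\te_i\circ\varrho$, and observe that $\varrho(\cL(\infty)_\diamond)$ is by construction the $\A^\pi$-module (respectively $\A$-module) generated by monomials in the $\tf_i^\varrho$ applied to $1$. Showing this coincides with $\cL(\infty)_\diamond$ reduces to the statement that the two families of Kashiwara operators preserve a common lattice, which in turn is essentially equivalent to the original problem; in the even case Kashiwara closes this circle precisely via the norm characterization discussed above. So this reformulation does not obviously break the deadlock without an additional input.

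The approach I would actually pursue, and the one flagged in the introduction and in \cite{CFLW}, is to bridge to an honest quantum group. Adjoining a formal parameter $t$ with $t^2=\pi$ to the scalars, one constructs an isomorphism between a scalar extension of $\UU^-$ and a scalar extension of (a twisted form of) a standard quantum group $\widetilde{U}^-$, in the spirit of the two-parameter construction of Fan and Li \cite{FL}. One then checks that under this identification: (a) the anti-involution $\varrho$ corresponds to the standard anti-involution fixing the Chevalley generators; (b) the polarization on $\UU^-$ corresponds to a genuinely positive-definite polarization on $\widetilde{U}^-$; and (c) the crystal lattice $\cL(\infty)$ corresponds to the standard Kashiwara crystal lattice on $\widetilde{U}^-$. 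Given (a)--(c), $\varrho$-stability descends from Kashiwara's original theorem. The proposition then follows: the case $\diamond=-$ is read off by specializing $\pi\mapsto -1$, and the case $\diamond=\emptyset$ follows from the decomposition $\UU^-\cong\UU^-_+\oplus\UU^-_-$ together with the fact that $\varrho$ preserves this decomposition.

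The main obstacle is item (c) of the bridge: matching the crystal lattices, not just the algebras, requires controlling the Kashiwara operators under the scalar-extended identification, which depends on triangular decomposition data rather than on ring-theoretic features alone. This is exactly the technical content that lies outside the present paper and is developed in \cite{CFLW}, which is why the authors defer the full proof there.
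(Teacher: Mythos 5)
Your proposal follows essentially the same route as the paper: the paper itself gives no internal proof but defers to \cite[Proposition~3.4]{CFLW}, where the statement is deduced from a scalar-extended (twistor) identification of $\UU^-$ with an ordinary Drinfeld--Jimbo quantum group in the spirit of Fan--Li, exactly the bridge you outline, and your observation that the isometry-plus-norm-characterization argument is blocked by Remark~\ref{rmk:nonchar}(2) matches the paper's stated reason for the deferral. Your analysis of why the na\"ive approaches fail and of where the genuine technical content lies (matching the crystal lattices under the identification) is accurate.
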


\begin{rmk}
Proposition~\ref{prop:L*=L} is proved in \cite[Proposition~3.4]{CFLW},
as it is an immediate consequence of connections between $\UU$ and the Drinfeld-Jimbo quantum groups developed therein. 
\end{rmk}

 \subsection{A comparison of polarizations}

Now let us compare the bilinear forms on $\cL(\lambda)$ and $\cL(\infty)$.

\begin{prop} \label{prop:pollimitKa}
For given $x,y\in \cL(\infty)_\zeta$, take $\lambda\gg 0$. Then
$(xv^+_\lambda, yv^+_\lambda)_0= (x,y)_0$.
\end{prop}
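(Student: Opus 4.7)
The plan is to deduce the result directly from Lemma~\ref{Ka:4.7.1}, which appears to have been set up precisely for this purpose. Applying that lemma with $P=x$ and $Q=y$, one obtains a polynomial $f(x_1,\dots,x_\ell)\in\Qqp[x_1,\dots,x_\ell]$ such that
\[
(xv_\lambda^+, yv_\lambda^+)=f\bigl((\pi_i q_i^2)^{\ang{\alpha_i^\vee,\lambda}}\bigr),\qquad f(0)=\Bigl(\prod_i(1-\pi_i q_i^2)^{-n_i}\Bigr)(x,y),
\]
where $\zeta=-\sum_i n_i\alpha_i$. The first step is to observe that $\prod_i(1-\pi_i q_i^2)^{-n_i}$ is a unit in $\A^\pi$ whose reduction mod $q$ is $1$, since each factor $(1-\pi_i q_i^2)^{-1}$ expands as $1+\pi_i q_i^2+\dots\in 1+q\A^\pi$. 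Hence $f(0)\equiv (x,y)\pmod{q\A^\pi}$, and both $(x,y)$ and $(xv_\lambda^+,yv_\lambda^+)$ lie in $\A^\pi$ by the appropriate interpretation of Proposition~\ref{prop:polprops}(i) (together with Theorem~\ref{th:CrBinfla}(1), which guarantees $xv_\lambda^+,yv_\lambda^+\in\cL(\lambda)$ for $\lambda\gg 0$).

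Next I would control the error $f((\pi_iq_i^2)^{\ang{\alpha_i^\vee,\lambda}})-f(0)$ by exploiting the freedom in the choice of $\lambda$. The variable $x_i=(\pi_iq_i^2)^{\ang{\alpha_i^\vee,\lambda}}$ has $q$-order $2d_i\ang{\alpha_i^\vee,\lambda}$, which can be made arbitrarily large by taking $\lambda\gg 0$. Every non-constant monomial in $f$ contains at least one variable $x_i$ with positive exponent, and therefore acquires arbitrarily high $q$-order as $\lambda$ grows. Since $f$ has only finitely many monomials and each of its coefficients in $\Qqp$ has some fixed finite pole order at $q=0$, for $\lambda$ sufficiently dominant these high powers of $q$ dominate, giving
\[
f\bigl((\pi_iq_i^2)^{\ang{\alpha_i^\vee,\lambda}}\bigr)\equiv f(0)\pmod{q\A^\pi}.
\]

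Combining the two congruences yields $(xv_\lambda^+,yv_\lambda^+)\equiv (x,y)\pmod{q\A^\pi}$, and reducing mod $q$ gives the desired identity $(xv_\lambda^+,yv_\lambda^+)_0=(x,y)_0$. The only delicate point in this argument is the second step, where one must ensure that the $q$-adic vanishing of the $x_i$'s genuinely overcomes any denominators that appear in the coefficients of $f$. Because $f$ depends only on the fixed data $x,y,\zeta$, the required dominance of $\lambda$ is a finite condition, which is exactly the content of the phrase $\lambda\gg 0$ in the statement. This is the only place where one needs to be careful; everything else is essentially bookkeeping around Lemma~\ref{Ka:4.7.1}.
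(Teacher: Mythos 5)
Your proof is correct, but it follows a genuinely different route from the paper's. The paper argues by induction on $\height(-\zeta)$: writing $x=F_ix'$, it uses the adjunction with $\tau_1$ and the defining relation \eqref{eq:E'} to get
\[
(xv^+_\lambda,yv^+_\lambda)=\frac{(\pi_iq_i^2)^{\ang{\alpha_i^\vee,\lambda+|y|+\alpha_i}}}{\pi_i q_i^2-1}\,(x'v^+_\lambda,E_i''(y)v^+_\lambda)+\frac{1}{1-\pi_iq_i^2}\,(x'v^+_\lambda,E_i'(y)v^+_\lambda),
\]
then kills the first term at $q=0$ for $\lambda\gg 0$ and applies $(F_ix',y)=(x',E_i'(y))$ plus the inductive hypothesis to the second. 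You instead use Lemma~\ref{Ka:4.7.1} as a black box: the constant term $f(0)=\prod_i(1-\pi_iq_i^2)^{-n_i}(x,y)$ is congruent to $(x,y)$ modulo $q\Ap$ because each factor lies in $1+q^2\Ap$ and $(x,y)\in\Ap$, while every non-constant monomial of $f$, evaluated at $x_i=(\pi_iq_i^2)^{\ang{\alpha_i^\vee,\lambda}}$, is suppressed by a $q$-power that grows with $\min_i\ang{\alpha_i^\vee,\lambda}$ and eventually beats the finitely many fixed pole orders of the coefficients. Your reading of ``$\lambda\gg 0$'' as a dominance condition depending only on the fixed data $x,y,\zeta$ is legitimate and is in fact also what the paper's own proof needs (there, too, the $q$-power must beat the pole order of $(x'v^+_\lambda,E_i''(y)v^+_\lambda)$, which is bounded independently of $\lambda$ but not by the bare height condition). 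The trade-off: your argument is shorter and isolates the $q$-adic estimate cleanly, but it rests entirely on Lemma~\ref{Ka:4.7.1}, which the paper states without proof as the $\pi$-analogue of \cite[Lemma 4.7.1]{Ka}; the paper's induction is self-contained, and its intermediate identity is reused verbatim in the proof of the subsequent proposition identifying $\lim_{\lambda\to\infty}(xv^+_\lambda,yv^+_\lambda)$ with $\set{x,y}$, which your black-box argument does not directly provide.
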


\begin{proof}
This is obvious for $\zeta=0$. We proceed by induction on the height of $\zeta$.
We can write $x=F_i x'$ for some $i\in I$.
Then
\begin{align*}
(xv^+_\lambda, yv^+_{\lambda})
&=(x'v^+_\lambda, q_i^{-1} \tK_i E_i y v^+_\lambda)\\
&= \Big(x'v^+_\lambda, \frac{\tJ_i\tK_i^2E_i''(y) -E_i'(y)}{\pi_i q_i^2-1}v^+_\lambda \Big)\\
&=\frac{(\pi_iq_i^2)^{\ang{\alpha_i^\vee,\lambda+|y|+\alpha_i}}}{\pi q_i^2-1}(x'v^+_{\lambda},
E_i''(y)v^+_\lambda)+\frac{1}{1-\pi_i q_i^2}(x' v^+_{\lambda}, E_i'(y)v^+_\lambda).
\end{align*}
Hence, by induction and the assumption $\lambda\gg 0$, we have
\begin{align*}
(xv^+_\lambda, yv^+_{\lambda})_0
=(x' v^+_{\lambda}, E_i'(y)v^+_\lambda)_0
=(x', E_i'(y))_0=(x,y)_0.
\end{align*}
The proposition is proved.
\end{proof}

We can also relate these bilinear forms to the bilinear form on
$\UU^-$ given in \cite{CHW}. We recall from that paper the following notation.
The algebra $\ff$ is the $\Qqp$-algebra generated by $\theta_i$
such that there is an isomorphism
$\ff\cong \UU^-$ given by $\theta_i\mapsto \theta_i^-=F_i$.
Via this identification, the twisted derivations $r_i$ and ${}_ir$ on $\ff$ lead
to twisted derivation on $\UU^-$ denoted by the same notations.
There is an bilinear form $(\cdot, \cdot)$ on $\ff$ and hence on $\UU^-$ \cite[\S 1]{CHW} which will be
denoted by $(\cdot, \cdot)_L$ here to avoid conflict of notation,
and its bar conjugate $\set{\cdot, \cdot}$ defined by
$\set{x,y}=\bar{(\bar x, \bar y)_L}$, for $x,y \in \UU^-$.

By comparing the formula for $E_i'$ in Lemma~\ref{lem:E'Fcomms}
and the formula for $r_i$ in \cite[\S1.3.13]{CHW}, we have
by
$$
E_i'(y)=\pi_i^{p(y)-p(i)}q_i^{\ang{\alpha_i^\vee, |y|+\alpha_i}}r_i(y), \quad \text{ for } y\in \UU^-.
$$
By \cite[Lemma~1.3.14]{CHW},
$
\ri( y)=\pi_i^{p(y)-p(i)} q_i^{\ang{\alpha_i^\vee, -|y|-\alpha_i}} \bar{\ir(\bar y)},
$
and hence we obtain
\begin{equation}  \label{eq:E=r}
E_i'(y)=\bar{\ir(\bar y)}.
\end{equation}

\begin{prop}
Let $x,y\in \UU^-$.
Then
$\lim_{\la\mapsto \infty} (x^-v^+_\lambda, y^-v^+_\lambda)= \set{x,y}$ in the $q$-adic norm.
\end{prop}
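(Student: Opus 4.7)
My plan is to induct on $\height(-|x|)$, leveraging the recursive identity derived in the proof of Proposition~\ref{prop:pollimitKa}. The pairing vanishes unless $|x|=|y|$, so I restrict to the common-weight case. The base case $|x|=0$ is immediate: both $(xv^+_\lambda,yv^+_\lambda)$ and $\set{x,y}$ reduce to $xy\in\Qqp$ for every $\lambda$.

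For the inductive step, write $x=F_ix'$ for some $i\in I$ with $x'\in\UU^-$ of strictly smaller height. The calculation opening the proof of Proposition~\ref{prop:pollimitKa} yields
\begin{align*}
(xv^+_\lambda,yv^+_\lambda)
&=\frac{(\pi_iq_i^2)^{\langle\alpha_i^\vee,\lambda+|y|+\alpha_i\rangle}}{\pi_iq_i^2-1}\,(x'v^+_\lambda,E_i''(y)v^+_\lambda)\\
&\qquad+\frac{1}{1-\pi_iq_i^2}\,(x'v^+_\lambda,E_i'(y)v^+_\lambda).
\end{align*}
By bilinearity I may assume $x,y\in\cL(\infty)$, in which case both right-hand matrix coefficients lie in $\Ap$ by Proposition~\ref{prop:polprops}(i) applied to $\cL(\lambda)=\wp_\lambda\cL(\infty)$. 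Since $(\pi_iq_i^2)^N$ has $q$-adic order $2d_iN$, the first term vanishes $q$-adically as $\lambda\gg0$. The inductive hypothesis applied to the pair $(x',E_i'(y))$ then produces
\[
\lim_{\lambda\gg0}(xv^+_\lambda,yv^+_\lambda)=\frac{1}{1-\pi_iq_i^2}\set{x',E_i'(y)}
\]
in the $q$-adic topology.

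To identify this limit with $\set{F_ix',y}$, I will invoke the adjointness property of the Lusztig form from \cite{CHW}, namely
\[
(F_iz,w)_L=\frac{1}{1-\pi_iq_i^{-2}}(z,{}_ir(w))_L,
\]
together with the bar identity $\overline{\pi_iq_i^{-2}}=\pi_iq_i^2$ and the formula $E_i'(y)=\overline{{}_ir(\overline{y})}$ recorded at \eqref{eq:E=r}. Substituting $z=\overline{x'}$, $w=\overline{y}$ and applying $\overline{\cdot}$ to both sides produces
\[
\set{F_ix',y}=\overline{(F_i\overline{x'},\overline{y})_L}=\frac{1}{1-\pi_iq_i^2}\set{x',E_i'(y)},
\]
which matches the limit above and closes the induction.

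The main obstacle I anticipate is purely one of bookkeeping: ensuring that the $q$-adic error from the first term dissipates uniformly through the finite tower of recursions. This is manageable because the order of the dominant error depends only on $\langle\alpha_i^\vee,\lambda\rangle$ and not on the particular inputs $x',y$ fed into each recursive step, so taking $\lambda$ sufficiently dominant in every coroot direction simultaneously controls all accumulated errors.
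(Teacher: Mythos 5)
Your argument is essentially the paper's own proof: the same recursion extracted from the proof of Proposition~\ref{prop:pollimitKa}, the same observation that the term carrying the prefactor $(\pi_iq_i^2)^{\ang{\alpha_i^\vee,\lambda+|y|+\alpha_i}}$ dies in the $q$-adic norm as $\lambda \to \infty$, and the same identification of the limit $\frac{1}{1-\pi_iq_i^2}\set{x',E_i'(y)}$ with $\set{F_ix',y}$ via the adjunction property of $(\cdot,\cdot)_L$ from \cite{CHW}, the identity $E_i'(y)=\bar{\ir(\bar y)}$ of \eqref{eq:E=r}, and bar-twisting; making the height induction explicit is fine and is implicit in the paper.

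The one step you should repair is your justification of boundedness of the error term. Reducing to $x,y\in\cL(\infty)$ by bilinearity does not place the matrix coefficients $(x'v^+_\lambda,E_i''(y)v^+_\lambda)$ and $(x'v^+_\lambda,E_i'(y)v^+_\lambda)$ in $\Ap$: writing $x=F_ix'$ gives no control on $x'$ (the membership $F_ix'\in\cL(\infty)$ does not force $x'\in\cL(\infty)$), and $\cL(\infty)$ is not known to be stable under $E_i'$ or $E_i''$ (it is stable under the Kashiwara operators $\te_i,\tf_i$, which are different operators), so Proposition~\ref{prop:polprops}(i) does not apply here. Fortunately no lattice input is needed: by Lemma~\ref{Ka:4.7.1} each of these matrix coefficients is a fixed polynomial, with coefficients in $\Qqp$ independent of $\lambda$, evaluated at the points $(\pi_iq_i^2)^{\ang{\alpha_i^\vee,\lambda}}$, so its $q$-adic order is bounded below uniformly in $\lambda$; alternatively, your own induction hypothesis applied to the pairs $(x',E_i''(y))$ and $(x',E_i'(y))$ (which have strictly smaller height) already gives convergence, hence boundedness. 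With either substitution the vanishing of the first term is secured and your proof coincides with the paper's.
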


\begin{proof}
Recall \cite[\S1]{CHW} a defining property of $(\cdot,\cdot)_L$ is that
\[(\theta_i, \theta_i)_L (x,\ir(y))_L=(\theta_ix,\theta_iy)_L.\]

We have $(\pi_iq_i^2)^{\ang{\alpha_i^\vee,\lambda+|y|+\alpha_i}}\rightarrow 0$ in the $q$-adic norm as
$\lambda\mapsto  +\infty$; so using the computations in the proof of
Proposition \ref{prop:pollimitKa}, we find that
\begin{align*}
\lim_{\la\mapsto \infty} (xv^+_\lambda, yv^+_{\lambda}) &=
\frac{1}{1-\pi_i q_i^2}\set{x' , E_i'(y)}
=\set{\theta_i, \theta_i}\set{x', \bar{\ir(\bar y)}}\\
&=\bar{(\theta_i, \theta_i)_L (\bar{x'}, \ir(\bar y))_L}
=\bar{(\theta_i \bar{x'}, \bar y)_L}=\set{x,y}.
\end{align*}
The proposition is proved.
\end{proof}


\section{Canonical bases}
\label{sec:CB}

In this section we shall prove the existence of canonical bases (= global crystal bases)
for $\UU^-$ and all integrable modules.

\subsection{The integral form of $\UU$}

Let $\UUZ$ be the $\Z[q,q^{-1}]$-subalgebra of $\UU$ generated by $F_i^{(n)}$,
$E_i^{(n)}$, $J_\mu$, $K_\mu^{\pm 1}$ and $\bbinom{K_i;0}{a}$ for all $i\in I$,
$n\in \Z_{\geq 0}$, and $a\in \Z$. We set $\UUZ^-$ to be the
$\Z[q,q^{-1}]$-subalgebra generated by $F_i^{(n)}$ for $i\in I$ and $n\in \Z_{\geq 0}$.
Then $\UUZ$ and $\UUZ^-$
are stable under the bar involution \eqref{eqn:barinv}.
Moreover, $\UUZ^-$ is stable under $E_i'$
whence $\UUZ^-$ is stable under  Kashiwara operators $\te_i$ and $\tf_i$; therefore
\[u=\sum_{n\geq 0} F_i^{(n)} u_n \in \UUZ^-\text{ and }e_i'u_n=0\ \Longrightarrow
u_n\in \UUZ^-.\]
Let $(F_i^n \UU^-)_\Z=F_i^n\UU^-\cap \UUZ^-$.
Then
\begin{equation*}
 (F_i^n\UU^-)_\Z=\sum_{k\geq n} F_i^{(k)}\UUZ^-, \qquad \text{for }n\geq0.
\end{equation*}
Moreover, $u=\sum F_i^{(k)} u_k\in(F_i^n\UU^-)_\Z$ if and only if $u_k=0$
for $k<n$.
Set $\cL_\Z(\infty)=\cL(\infty)\cap \UUZ^-$. Then $\cL_\Z(\infty)$ is stable
under the Kashiwara operators $\te_i$ and $\tf_i$. Therefore, $B(\infty)\subset \cL_\Z(\infty)/q\cL_\Z(\infty)\subset \cL(\infty)/q\cL(\infty)$.

Similarly, for $\diamond \in \{+,-\}$, we can define the integral forms $\UUZ_\diamond$ and $\UUZ_{\diamond}^-$
of the specializations $\UU_\diamond$ and $\UU_\diamond^-$, respectively. Then
$\cL_\Z(\infty)_\diamond :=\cL(\infty)_\diamond \cap \UUZ^-_\diamond$ is stable under Kashiwara operators.

Let $\A_\Z$
be the $\Zp$-subalgebra of $\Qqp$ generated by $q$ and $(1-(\pi q^2)^n)^{-1}$
for $n\geq 1$. Similarly, for $\diamond=+,-$,  let $\A_\Z^\diamond$
be the $\Z$-subalgebra of $\Qqp$ generated by $q$ and $(1-(\diamond q^2)^n)^{-1}$
for $n\geq 1$.
Letting $K_\Z^\diamond$ be the subalgebra generated by $\A_\Z^\diamond$
and $q^{-1}$, we have $\A_\Z^\diamond =\A\cap K_\Z^\diamond$, for $\diamond \in \{\emptyset, +,-\}$.
(As before, the superscript or subscript $\emptyset$ is dropped by convention.)
Then we see that $(\UUZ^-_\diamond, \UUZ^-_\diamond)\subset K_\Z^\diamond$,
whence $(\cL_\Z(\infty)_\diamond, \cL_\Z(\infty)_\diamond)\subset\A_\Z^\diamond$. Therefore,
$(\cdot,\cdot)_0$ is $\Zp$-valued on
$\cL_\Z(\infty)/q\cL_\Z(\infty)$, and  $\cL_\Z(\infty)/q\cL_\Z(\infty)$
is a free $\Zp$-module with $\pi$-basis $B(\infty)$. Similarly, for $\diamond=+,-$, $(\cdot,\cdot)_0$ is $\Z$-valued on
$\cL_\Z(\infty)_\diamond/q\cL_\Z(\infty)_\diamond$, and  $\cL_\Z(\infty)_\diamond/q\cL_\Z(\infty)_\diamond$
is a free $\Z$-module with $\pi$-basis $B(\infty)_\diamond$.

\subsection{The integral form of $V(\lambda)$}

Let $\diamond \in \{\emptyset, +, -\}$.
Set $V_\Z(\lambda)_\diamond=\UUZ^-_\diamond v^+_\lambda$. Then $V_\Z(\lambda)_\diamond$ is a $\UUZ_\diamond$-module.
We set also, for $n\geq 0$,
\begin{equation*}
(F_i^n V(\lambda)_\diamond)_\Z=(F_i^n\UU^-_\diamond)_\Z v^+_\lambda=\sum_{k\geq n} F_i^{(k)} V_\Z(\lambda)_\diamond.
\end{equation*}
Note that $V_\Z(\lambda)_\diamond$ and $(F_i^n V(\lambda)_\diamond)_\Z$ are bar-invariant.

Let $\cL_\Z(\lambda)_\diamond=V_\Z(\lambda)_\diamond\cap \cL(\lambda)_\diamond$.
Since $\cL(\lambda)_\diamond=\wp_\lambda(\cL(\infty)_\diamond)$ we have
\begin{equation*}
\cL_\Z(\lambda)_\diamond\subset \wp_\lambda(\cL_\Z(\infty)_\diamond)
\end{equation*}
and so $B(\lambda)_\diamond\subset \cL_\Z(\lambda)_\diamond/q\cL_\Z(\lambda)_\diamond\subset \cL(\lambda)_\diamond/q\cL(\lambda)_\diamond$.

The following is a $\pi$-analogue of \cite[Lemma~6.1.14]{Ka}.

\begin{lem}
\label{lem:divpowdecomp}
Let $\lambda\in P$, $i\in I$, and $u\in M^\lambda$ for an integrable
$\UU$-module $M$.
Assume $n=-\ang{\alpha_i^\vee,\lambda}\geq 1$. Then we have
\[u=\pi_i^{\binom{n}{2}}\sum_{k\geq n} (-1)^{k-n}\bbinom{k-1}{k-n} F_i^{(k)}E_i^{(k)} u.\]
\end{lem}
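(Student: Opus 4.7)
My plan is to reduce the identity to a rank-one calculation involving $E_i, F_i, \tK_i^{\pm 1}, \tJ_i$, and then verify a scalar $(q,\pi)$-binomial identity. Because the identity is $\Qqp$-linear in $u$ and only involves $E_i, F_i$, and because $M$ is integrable (so it decomposes as a direct sum of finite-dimensional simple rank-one modules), it suffices to check the identity on each rank-one summand.

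First, by integrability and the string decomposition for integrable $\UU$-modules given at the start of \S\ref{sec:CBpolar}, I would write
\[
u = \sum_{t\geq 0} F_i^{(t)} u_t, \qquad u_t \in M^{\lambda+t\alpha_i}\cap \ker E_i,
\]
with only finitely many nonzero summands. For each nonzero $u_t$, the rank-one highest weight $\ang{\alpha_i^\vee,\lambda+t\alpha_i}=2t-n$ must be $\geq 0$, and $F_i^{(t)} u_t \neq 0$ further requires $t\leq 2t-n$, so $t\geq n$. By linearity, it suffices to establish the identity on each $F_i^{(t)} u_t$ for $t\geq n$ separately.

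For such a summand, I would iterate the defining super-commutator $E_iF_i - \pi_i F_iE_i = (\tJ_i\tK_i - \tK_i^{-1})/(\pi_iq_i - q_i^{-1})$ on $u_t$, in the spirit of Lemma~\ref{Ka:3.4.6}, to obtain an explicit rank-one formula
\[
E_i^{(k)} F_i^{(t)} u_t = \pi_i^{\gamma(k,t)}\bbinom{t-n+k}{k}_i F_i^{(t-k)} u_t \quad (0\leq k\leq t),
\]
for an explicit exponent $\gamma(k,t)\in \Z$ (in particular $\gamma(n,n)=\binom{n}{2}$, as confirmed by a direct computation of $E_i^{(n)} F_i^{(n)}$ on a $\UU_i$-highest weight vector of weight $n$). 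Multiplying on the left by $F_i^{(k)}$ and using $F_i^{(k)} F_i^{(t-k)} = \bbinom{t}{k}_i F_i^{(t)}$, the identity reduces to the scalar equality
\[
\sum_{k=n}^{t}(-1)^{k-n}\pi_i^{\gamma(k,t)}\bbinom{k-1}{k-n}_i \bbinom{t-n+k}{k}_i \bbinom{t}{k}_i = \pi_i^{-\binom{n}{2}}\,\delta_{t,n}.
\]

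The case $t=n$ is immediate (only $k=n$ survives, giving $1\cdot 1 \cdot 1 \cdot \pi_i^{\binom{n}{2}}$, which matches the right side since $\pi_i^2=1$). The main obstacle is the vanishing of this sum for $t>n$, which is the $(q,\pi)$-analogue of the combinatorial identity behind Kashiwara's \cite[Lemma~6.1.14]{Ka}. I would attack it by a $(q,\pi)$-Vandermonde argument after combining $\bbinom{t-n+k}{k}_i\bbinom{t}{k}_i$ into a single $q$-binomial expression; the key observation is that the exponents $\gamma(k,t)$ contribute only a $k$-independent prefactor modulo $2\Z$ once the iteration is carried out carefully, so that the remaining alternating sum of $q$-binomials reduces to the standard $q$-identity valid in $\Z[q,q^{-1}]$. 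The careful bookkeeping of these $\pi_i$-signs is the principal technical point, and it parallels the $\pi$-tracking already performed in the proofs of Lemmas \ref{Ka:3.4.6} and \ref{particleserre}.
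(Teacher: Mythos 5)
Your overall strategy (decompose $u$ into $i$-string components $F_i^{(t)}u_t$ with $E_iu_t=0$, compute $E_i^{(k)}F_i^{(t)}u_t$, recombine divided powers, and reduce to a scalar $(q,\pi)$-binomial identity) is exactly the paper's route, but your reduction lands on a \emph{false} scalar identity. The right-hand side of your claimed equality should not be $\pi_i^{-\binom{n}{2}}\,\delta_{t,n}$: the alternating sum
$\sum_{k=n}^{t}(-1)^{k-n}\pi_i^{\gamma(k,t)}\bbinom{k-1}{k-n}_i\bbinom{t-n+k}{k}_i\bbinom{t}{k}_i$
does \emph{not} vanish for $t>n$; it equals the constant $\pi_i^{\binom{n}{2}}$ for every $t\geq n$. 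Indeed, if it were a delta, then applying the operator $\pi_i^{\binom{n}{2}}\sum_{k\geq n}(-1)^{k-n}\bbinom{k-1}{k-n}F_i^{(k)}E_i^{(k)}$ to a general $u=\sum_t F_i^{(t)}u_t$ would return only the single string component $F_i^{(n)}u_n$, contradicting the lemma itself. A classical sanity check ($\pi=1$, $q\to 1$, $n=1$, $t=2$, highest weight $2t-n=3$) gives $F E F^{(2)}v - F^{(2)}E^{(2)}F^{(2)}v = 4F^{(2)}v-3F^{(2)}v=F^{(2)}v$, so the sum is $1$, not $0$. Consequently the ``main obstacle'' you propose to attack by a $q$-Vandermonde argument is the vanishing of a sum that does not vanish, and that step cannot be completed.

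The correct endgame is the one the paper takes: after the rank-one computation one must prove, for all $r=t-n\geq 0$ and $n\geq 1$,
\begin{equation*}
\sum_{s=0}^{r}(-1)^{s}\pi^{(s+n)(r+n)+\binom{s+n+1}{2}}\bbinom{s+n-1}{s}\bbinom{s+r+n}{r}\bbinom{r+n}{s+n}=\pi^{\binom{n}{2}},
\end{equation*}
which is the $(q,\pi)$-analogue of Kashiwara's identity (6.1.19); the paper derives it by rewriting $(q,\pi)$-integers as Gaussian binomials in a formal variable ($[m]=(\pi q)^{m-1}\set{m}_{\pi q^{-2}}$), noting that Kashiwara's identity holds as a formal identity in $x$, substituting $x=\pi q^{-2}$, and checking a mod~$2$ congruence of the resulting $\pi$-exponents. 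Note also that your assertion that the exponents $\gamma(k,t)$ contribute ``only a $k$-independent prefactor modulo $2\Z$'' is not accurate (in the paper the exponent $km+\binom{k+1}{2}$ genuinely depends on $k$ mod $2$); the signs come out right not because they are constant but because the substitution $x=\pi q^{-2}$ produces exactly the matching $\pi$-powers. So: keep your reduction, replace the delta by the constant $\pi_i^{\binom{n}{2}}$, and prove that constant identity along the lines just described.
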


\begin{proof}
We may assume $u=F_i^{(m)}v$ with $v\in \ker e_i \cap M_{\lambda+m \alpha_i}$
with $m\geq n$. Then

\begin{align*}
\sum_{k\geq n} (-1)^{k-n}&\bbinom{k-1}{k-n}_i F_i^{(k)}E_i^{(k)} u\\
&=\sum_{k\geq n} (-1)^{k-n}\bbinom{k-1}{k-n}_i F_i^{(k)}E_i^{(k)} F_i^{(m)}v\\
&=\sum_{m\geq k\geq n} \pi_i^{km+\binom{k+1}{2}}(-1)^{k-n}\bbinom{k-1}{k-n}_i \bbinom{k+m-n}{k}_iF_i^{(k)}F_i^{(m-k)}v\\
&= \sum_{m\geq k\geq n} \pi_i^{km+\binom{k+1}{2}}(-1)^{k-n}\bbinom{k-1}{k-n}_i \bbinom{k+m-n}{k}_i\bbinom{m}{k}_i \cdot F_i^{(m)}v.
\end{align*}
By a change of variables with $t=k-n$ and $r=m-n$, we have
\begin{align*}
&\sum_{m\geq k\geq n} \pi_i^{km+\binom{k+1}{2}}(-1)^{k-n}\bbinom{k-1}{k-n}_i \bbinom{k+m-n}{k}_i\bbinom{m}{k}_i\\
&=\sum_{t=0}^r (-1)^{t}\pi_i^{(t+n)(r+n)+\binom{t+n+1}{2}}\bbinom{t+n-1}{t}_i \bbinom{t+r+n}{r}_i\bbinom{r+n}{t+n}_i.
\end{align*}

The proof is completed by \cite[(6.1.19)]{Ka} if $i\in I_0$ and by the following lemma
if $i\in I_1$.
\end{proof}

\begin{lem} We have the following identity for $r\ge 0$ and $n \ge 1$:
\begin{equation*}
\sum_{t=0}^r (-1)^{t}\pi^{(t+n)(r+n)+\binom{t+n+1}{2}}\bbinom{t+n-1}{t} \bbinom{t+r+n}{r}\bbinom{r+n}{t+n}=\pi^{\binom{n}{2}}.
\end{equation*}
\end{lem}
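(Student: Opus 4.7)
The plan is to reduce the identity to Kashiwara's \cite[(6.1.19)]{Ka} via a specialization argument. Both sides of the asserted equality lie in $\Zpqqi = \Z[\pi,q,q^{-1}]/(\pi^2-1)$, and the ring homomorphism
$$
\Zpqqi \longrightarrow \Z[q,q^{-1}] \oplus \Z[q,q^{-1}], \qquad f(\pi,q) \longmapsto \bigl(f(1,q),\ f(-1,q)\bigr),
$$
is injective (because $\Zp = \Z[\pi]/(\pi^2-1) \hookrightarrow \Z \oplus \Z$ via $a + b\pi \mapsto (a+b, a-b)$). Hence it suffices to verify the identity after separately setting $\pi = 1$ and $\pi = -1$.

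At $\pi = 1$ every $\pi$-prefactor collapses to $1$, the $(q,\pi)$-binomials reduce to ordinary Gaussian binomials, and the right-hand side equals $1$. The resulting identity is precisely Kashiwara's \cite[(6.1.19)]{Ka}, which we invoke.

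At $\pi = -1$ the argument is by induction on $r \geq 0$ (with $n \geq 1$ arbitrary). For the base case $r = 0$ the sum reduces to the single term $t = 0$, all three binomials evaluate to $1$, and the claim becomes the parity statement $(-1)^{n^2 + \binom{n+1}{2}} = (-1)^{\binom{n}{2}}$. This holds because
$$
n^2 + \binom{n+1}{2} - \binom{n}{2} = n^2 + n = n(n+1)
$$
is always even; in fact this argument works for general $\pi$ with $\pi^2 = 1$, so the base case is handled uniformly. For the inductive step I would split one of the binomial factors—most conveniently $\bbinom{r+n}{t+n}$—via the Pascal-type recursion
$$
\bbinom{N+1}{K} = (\pi q)^K \bbinom{N}{K} + q^{-(N+1-K)} \bbinom{N}{K-1},
$$
reindex the two resulting sums, and identify each with an instance of the identity at parameter $r - 1$ (with a shifted value of $n$); combining them using the inductive hypothesis yields the desired $\pi^{\binom{n}{2}}$ on the right.

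The main obstacle will be the sign bookkeeping in the $\pi = -1$ inductive step: the exponent $(t+n)(r+n) + \binom{t+n+1}{2}$ transforms in a slightly awkward way under reindexing, and one must check that the two halves produced by the Pascal recursion recombine with the correct overall sign. Since all the relevant computations take place modulo $2$, this is elementary in principle but tedious in practice. An alternative, which avoids the specialization altogether, is to run essentially the same inductive argument directly over $\Zpqqi$ using the general $(q,\pi)$-Pascal recursion; this is a uniform reformulation of the same combinatorics but sidesteps the separate invocation of \cite[(6.1.19)]{Ka}.
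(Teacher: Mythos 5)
Your reduction to the two specializations $\pi=\pm1$ is legitimate (the map $\Zpqqi\to\Z[q,q^{-1}]\oplus\Z[q,q^{-1}]$ is injective), and at $\pi=1$ the identity is indeed exactly Kashiwara's \cite[(6.1.19)]{Ka}. But the whole content of the lemma is the $\pi=-1$ case, and there your argument is a plan rather than a proof --- and the plan as stated does not line up. If you split $\bbinom{r+n}{t+n}$ by the Pascal-type recursion, the two resulting sums are \emph{not} instances of the identity at parameter $r-1$ with a shifted $n$: the middle factor is still $\bbinom{t+r+n}{r}$, whereas an instance at $(r-1,n+1)$ would require $\bbinom{t+r+n}{r-1}$, and the first factor would have to be $\bbinom{t+n}{t}$ rather than $\bbinom{t+n-1}{t}$. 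So neither half matches the inductive hypothesis; the obstruction is not the ``sign bookkeeping'' you flag but the structural reindexing itself, i.e.\ precisely the step you defer is the one that needs proof. The same objection applies to your closing suggestion of running the induction directly over $\Zpqqi$.

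For comparison, the paper needs no induction at all: writing $\ang{m}=q^{m-1}\set{m}_{q^{-2}}$ with $\set{m}_x=\frac{1-x^m}{1-x}$ turns \cite[(6.1.19)]{Ka} into an identity among the binomials $\cbinom{\cdot}{\cdot}_x$ in a single formal variable $x$ (valid for all $x$, since it holds at the indeterminate $x=q^{-2}$); substituting $x=\pi q^{-2}$ and converting back via $[m]=(\pi q)^{m-1}\set{m}_{\pi q^{-2}}$ gives
$\sum_{t=0}^r (-1)^{t}\pi^{\binom{t+1}{2}-(n+t)r}\bbinom{t+n-1}{t}\bbinom{t+r+n}{r}\bbinom{r+n}{t+n}=1$,
and the lemma follows from the parity congruence $\binom{t+1}{2}-(n+t)r\equiv (t+n)(r+n)+\binom{t+n+1}{2}+\binom{n}{2}\pmod 2$. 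If you wish to keep your $\pi=\pm1$ splitting, the same substitution with $x=-q^{-2}$ disposes of the $\pi=-1$ case and closes the gap; as written, the proposal is incomplete.
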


\begin{proof}
We first introduce the following notations. Let
\[\ang{n}:=[n]|_{\pi=1}=\frac{q^n-q^{-n}}{q-q^{-1}},\quad \set{n}_x:=\frac{1-x^n}{1-x}.\]
We extend this notation to factorials
and binomials in a self-explanatory manner.
We also note that
\begin{align}
\label{eqn:pqinttogauss}[n]&=(\pi q)^{n-1}\set{n}_{\pi q^{-2}},\\
\label{eqn:qinttogauss}\ang{n}&=q^{n-1}\set{n}_{q^{-2}}.
\end{align}

The identity \cite[(6.1.19)]{Ka} may be stated in this notation as
\[
\sum_{t=0}^r (-1)^{t}\abinom{t+n-1}{t}\abinom{t+r+n}{r}\abinom{r+n}{t+n}=1,
\]
which can be transformed using \eqref{eqn:qinttogauss} into
\[
\sum_{t=0}^r (-1)^{t}(q^{-2})^{\binom{t+1}{2}-(n+t)r}\cbinom{t+n-1}{t}_{q^{-2}} \cbinom{t+r+n}{r}_{q^{-2}}\cbinom{r+n}{t+n}_{q^{-2}}=1.
\]
Note that this is a general identity for the $\set{n}_x$, so
\[
\sum_{t=0}^r (-1)^{t}x^{\binom{t+1}{2}-(n+t)r}\cbinom{t+n-1}{t}_x \cbinom{t+r+n}{r}_x\cbinom{r+n}{t+n}_x=1.
\]
In particular, setting $x=\pi q^{-2}$
and using \eqref{eqn:pqinttogauss}, we obtain
\[
\sum_{t=0}^r (-1)^{t}\pi^{\binom{t+1}{2}-(n+t)r}\bbinom{t+n-1}{t} \bbinom{t+r+n}{r}\bbinom{r+n}{t+n}=1.
\]
The lemma follows since $\binom{t+1}{2}-(n+t)r=(t+n)(r+n)+\binom{t+n+1}{2} + \binom{n}{2} \mod  2$.
\end{proof}

\begin{prop}
Let $M$ be an integrable $\UU$-module and $M_\Z$ a $\UUZ$-weight submodule of $M$.
Let $\lambda\in P$ and $i\in I$. Suppose $n=-\ang{\alpha_i^\vee,\lambda}\geq 0$.
Then $$
(M_\Z)^\lambda=\sum_{k\geq n} F_i^{(k)} (M_\Z)^{\lambda+k\alpha_i}.
$$
\end{prop}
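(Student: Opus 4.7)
The plan is to derive this proposition directly from Lemma \ref{lem:divpowdecomp}, handling the boundary case $n=0$ separately.

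First I would dispose of the easy inclusion $\supseteq$. Every $F_i^{(k)}$ lies in $\UUZ$, and $M_\Z$ is by hypothesis stable under $\UUZ$, so $F_i^{(k)}(M_\Z)^{\lambda+k\alpha_i}\subseteq M_\Z$. Since $F_i^{(k)}$ shifts weight by $-k\alpha_i$, the image actually lands in $(M_\Z)^\lambda$, giving one direction.

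Next, for the nontrivial inclusion $\subseteq$, I would split on $n$. If $n=0$, the summation on the right-hand side already includes the $k=0$ term, which is $F_i^{(0)}(M_\Z)^\lambda = (M_\Z)^\lambda$, so the statement is immediate. If $n\geq 1$, take any $u\in (M_\Z)^\lambda\subseteq M^\lambda$ and apply Lemma \ref{lem:divpowdecomp}, which yields
\[
u=\sum_{k\geq n} c_k\, F_i^{(k)}E_i^{(k)} u,\qquad c_k:=\pi_i^{\binom{n}{2}}(-1)^{k-n}\bbinom{k-1}{k-n}_i\in \Zpqqi.
\]
Setting $v_k:=c_k\, E_i^{(k)} u$, I note that $E_i^{(k)}\in \UUZ$ and $M_\Z$ is $\UUZ$-stable, so $E_i^{(k)} u\in (M_\Z)^{\lambda+k\alpha_i}$; multiplying by the scalar $c_k\in\Zpqqi$ keeps $v_k$ in $(M_\Z)^{\lambda+k\alpha_i}$. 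The sum is finite because $M$ is integrable, so $E_i^{(k)} u=0$ for $k$ sufficiently large. This exhibits $u$ in the desired form and completes the inclusion.

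There is no real obstacle here: the only nontrivial input is Lemma \ref{lem:divpowdecomp}, whose $\pi$-modified form has already been verified in the excerpt. The small bookkeeping point is just to note that the coefficients appearing in that lemma lie in $\Zpqqi$, so the expansion is compatible with the $\Z[q,q^{-1}]$-integral structure of $\UUZ$ and hence of $M_\Z$. The case $n=0$ is a triviality but must be separated out because the hypothesis of Lemma \ref{lem:divpowdecomp} requires $n\geq 1$.
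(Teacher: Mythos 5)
Your proposal is correct and follows exactly the paper's route: the hard inclusion is obtained by applying Lemma \ref{lem:divpowdecomp} to $u\in (M_\Z)^\lambda$ (with the trivial $n=0$ case noted separately), and the reverse inclusion is immediate from $\UUZ$-stability; the paper's own proof is just a terser version of this, citing the lemma for $\subseteq$ and calling $\supseteq$ clear. Your added bookkeeping about the coefficients and the finiteness of the sum from integrability only makes explicit what the paper leaves implicit.
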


\begin{proof}
The lemma implies
that $(M_\Z)^\lambda\subseteq \sum_{k\geq n} F_i^{(k)} (M_\Z)^{\lambda+k\alpha_i}$.
The reverse inclusion is clear.
\end{proof}

\subsection{Existence of canonical bases}

Let us consider the following collection $(G_l)$ of statements for $l\geq 0$, where $\diamond \in \{\emptyset, +, -\}$.
\begin{enumerate}
\item[$(G_l.1)$] For any $\zeta\in Q^-(l)$,
\[(\UUZ^-_\diamond)^\zeta\cap \cL_\Z(\infty)_\diamond \cap \bar{\cL_\Z(\infty)_\diamond}\to \cL_\Z(\infty)_\diamond^\zeta/q\cL_\Z(\infty)_\diamond^\zeta\]
is an isomorphism.
\item[$(G_l.2)$] For any $\zeta\in Q^-(l)$,
\[V_\Z^-(\lambda)_\diamond^{\lambda+\zeta}\cap \cL_\Z(\lambda)_\diamond\cap \bar{\cL_\Z(\lambda)_\diamond}\to \cL_\Z(\lambda)_\diamond^{\lambda+\zeta}/q\cL_\Z(\lambda)_\diamond^{\lambda+\zeta}\]
is an isomorphism.
\end{enumerate}
Let $G, G_\lambda$ be the inverses of these isomorphisms.
\begin{enumerate}
\item[$(G_l.3)$] For any $\zeta\in Q^-(l)$,  $n\geq 0$, and $b\in \tf_i^n(B(\infty)_\diamond^{\zeta+n\alpha+i})$,
\[G(b)\in f_i^n \UU^-.
\]
\end{enumerate}

The case $\diamond =+$ is as in \cite[\S6-7]{Ka}, and so let us now consider the case $\diamond=-$.

Note that when $l=0$, these statements are obvious. We shall
prove $G_l$ by induction on $l$, so assume $l>0$ and $G_{l-1}$ holds.

\begin{lem}
For $\zeta\in Q^-(l-1)$ we have
\begin{align*}
\displaystyle(\UUZ^-_-)^\zeta\cap \cL_\Z(\infty)_- &= \bigoplus_{b\in B(\infty)_-^\zeta} \Z[q] G(b), \\
\displaystyle(\UUZ^-_-)^\zeta&= \bigoplus_{b\in B(\infty)_-^\zeta} \Z[q,q^{-1}] G(b), \\
\displaystyle V_\Z(\lambda)_-^{\lambda+\zeta}\cap \cL (\lambda)_- &= \bigoplus_{b\in B(\lambda)_-^{\lambda + \zeta}} \Z[q] G_\lambda(b), \\
\displaystyle V_\Z(\lambda)_-^{\lambda+\zeta} &= \bigoplus_{b\in B(\lambda)_-^{\lambda + \zeta}} \Z[q,q^{-1}] G_\lambda(b).
\end{align*}
\end{lem}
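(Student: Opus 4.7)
The plan is to use the inductive hypothesis $(G_{l-1})$ to extract canonical lifts $G(b)$ and $G_\lambda(b)$ for $\zeta \in Q^-(l-1)$, and then to deduce the four decompositions by reducing to the analogous statement modulo $q$ combined with a bar-invariance argument, following \cite[\S 7]{Ka} with modifications to accommodate the $\pi$-basis structure. The four identities are proved in parallel, with the $\infty$- and $\lambda$-cases differing only by substituting $G_\lambda(b)$ for $G(b)$ and the polarization on $V(\lambda)_-$ for that on $\UU^-_-$.

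First I would fix an honest $\Z$-basis $B^0_\zeta \subseteq B(\infty)_-^\zeta$ of the free $\Z$-module $\cL_\Z(\infty)_-^\zeta/q\cL_\Z(\infty)_-^\zeta$; this exists by Theorem~\ref{th:CrBinf} since $B(\infty)_-^\zeta$ is a $\pi$-basis. Because $G(b) \equiv b \pmod{q\cL(\infty)_-}$, the collection $\{G(b) : b \in B^0_\zeta\}$ is $\Q(q)$-linearly independent in $(\UU^-_-)^\zeta$; matching ranks, it is a $\Q(q)$-basis. An iterative reduction modulo successive powers of $q$, combined with bar-invariance of each $G(b)$ and of $\UUZ^-_-$, then lifts this to a $\Z[q,q^{-1}]$-basis of $(\UUZ^-_-)^\zeta$, yielding the second decomposition. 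The fourth decomposition is proved identically using $G_\lambda(b)$ together with an honest basis $B^0_{\lambda+\zeta} \subseteq B(\lambda)_-^{\lambda+\zeta}$.

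For the first (resp.\ third) decomposition, suppose $u \in (\UUZ^-_-)^\zeta \cap \cL_\Z(\infty)_-$ (resp.\ $u \in V_\Z(\lambda)_-^{\lambda+\zeta} \cap \cL_\Z(\lambda)_-$), and expand $u = \sum c_b G(b)$ via the $\Z[q,q^{-1}]$-basis just produced. To force $c_b \in \Z[q]$, I would use the polarization: by Proposition~\ref{prop:polprops}, $B^0_\zeta$ is $\pi$-orthonormal with respect to $(\cdot,\cdot)_0$, so the pairing $(u, G(b))_0 \in \A_\Z^-$ captures the $q$-constant term of $c_b$ up to sign. Subtracting off this leading term and dividing by $q$ (which keeps the remainder inside $\cL_\Z(\infty)_- \cap \UUZ^-_-$, hence allows iteration) then forces $c_b \in \Z[q]$ for every $b$. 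The argument for $V_\Z(\lambda)_-$ is identical, with the polarization of Proposition~\ref{prop:pol} in place of the one on $\UU^-_-$.

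The main obstacle is the notational bookkeeping forced by the $\pi$-basis structure: as illustrated in Example~\ref{ex:oddodd}, $B(\infty)_-^\zeta$ may contain both $b$ and $-b$, in which case $G(-b) = -G(b)$ and the two summands $\Z[q]G(b)$ and $\Z[q]G(-b)$ coincide. The direct sums in the statement must therefore be read as indexed by an honest subset $B^0_\zeta$ (equivalently, with redundant $\pm$-paired summands identified); once this convention is adopted, the four identities follow uniformly from the outline above, and the lemma feeds back into the induction to establish $(G_l.1)$ and $(G_l.2)$ in the subsequent step.
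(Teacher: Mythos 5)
Your proposal is correct and follows essentially the paper's route: the paper handles all four identities in one stroke by citing the balanced-triple lemma \cite[Lemma 7.1.1]{Ka} together with the inductive statements $(G_{l-1}.1)$--$(G_{l-1}.2)$, and your argument is precisely an unpacking of that lemma (including the right reading of the direct sums as indexed by an honest subset $B^0_\zeta$ of the $\pi$-basis, which is the intended convention here since $\pi$ acts by $-1$). The only inessential difference is your use of the polarization to pin down constant terms: once the $\Z[q,q^{-1}]$-decomposition is in hand, membership $u\in\cL_\Z(\infty)_-$ already forces the coefficients to lie in $\A\cap\Z[q,q^{-1}]=\Z[q]$, because the elements $G(b)$, $b\in B^0_\zeta$, form an $\A$-basis of $\cL(\infty)_-^\zeta$ by Nakayama, so the $\pi$-orthonormality input is harmless but not needed.
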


\begin{proof}
This follows from \cite[Lemma 7.1.1]{Ka} and ($G_{l-1}.1$)-($G_{l-1}.2$).
\end{proof}

\begin{lem}
For $\zeta\in Q^-(l-1)$, $b\in \cL_\Z(\infty)_-/q\cL_\Z(\infty)_-$, and $\lambda\in P^+$,
\[G(b) v^+_\lambda = G_\lambda(\hat \wp_\lambda b).\]
\end{lem}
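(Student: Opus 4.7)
The plan is to verify that $G(b)v^+_\lambda$ satisfies the three characterizing properties of the canonical basis element $G_\lambda(\hat\wp_\lambda b)$, and then invoke the uniqueness in $(G_{l}.2)$.

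First I would observe that $G(b)v^+_\lambda$ is exactly $\wp_\lambda(G(b))$, so this vector has weight $\lambda+\zeta$ and automatically lies in $V_\Z(\lambda)_-^{\lambda+\zeta}$ (because $G(b)\in\UUZ^-_-$). Since $G(b)\in \cL(\infty)_-$ and $\wp_\lambda(\cL(\infty)_-)=\cL(\lambda)_-$ by Theorem~\ref{th:CrBinfla}(i), we also have $G(b)v^+_\lambda\in \cL(\lambda)_-$, hence in $\cL_\Z(\lambda)_-$.

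Next I would check bar-invariance. The bar involution on $V(\lambda)_-$ is normalized so that $v^+_\lambda$ is bar-invariant; combined with the fact that the bar involution is an anti-linear ring map, this gives $\overline{G(b)v^+_\lambda}=\overline{G(b)}\cdot \overline{v^+_\lambda}=G(b)v^+_\lambda$, since $G(b)$ is bar-invariant by the definition coming from $(G_l.1)$. Thus $G(b)v^+_\lambda \in V_\Z(\lambda)_-^{\lambda+\zeta}\cap \cL_\Z(\lambda)_-\cap \overline{\cL_\Z(\lambda)_-}$.

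Finally I would reduce modulo $q\cL_\Z(\lambda)_-$. Since $G(b)\equiv b \pmod{q\cL_\Z(\infty)_-}$ and $\hat\wp_\lambda$ is induced by $\wp_\lambda$, one has $G(b)v^+_\lambda \equiv \hat\wp_\lambda(b) \pmod{q\cL_\Z(\lambda)_-}$. If $\hat\wp_\lambda(b)\neq 0$, the isomorphism in $(G_l.2)$ (applied at height $\leq l-1$, which is where $\zeta$ lies) forces $G(b)v^+_\lambda=G_\lambda(\hat\wp_\lambda b)$ by uniqueness of the bar-invariant integral lift. If $\hat\wp_\lambda(b)=0$, then $G(b)v^+_\lambda$ is a bar-invariant element of $V_\Z(\lambda)_-^{\lambda+\zeta}\cap q\cL_\Z(\lambda)_-$, and the same isomorphism forces it to be $0=G_\lambda(0)$. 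The potential subtlety, and the only point worth noting, is that the conclusion relies on $\zeta\in Q^-(l-1)$ so that the inductive hypothesis $(G_{l-1}.2)$ already provides the required bar-integral uniqueness; no step beyond the induction hypothesis is needed.
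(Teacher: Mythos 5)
Your argument is correct and is essentially the intended one (the paper states this lemma without proof, following Kashiwara): put $G(b)v^+_\lambda$ in the triple intersection $V_\Z(\lambda)_-^{\lambda+\zeta}\cap \cL_\Z(\lambda)_-\cap\bar{\cL_\Z(\lambda)_-}$, note $G(b)v^+_\lambda\equiv \hat\wp_\lambda(b)$ modulo $q\cL_\Z(\lambda)_-$, and invoke the isomorphism in $(G_{l-1}.2)$. One small correction: $\bar{G(b)}=G(b)$ is not ``by definition'' from $(G_{l-1}.1)$ --- in the paper it is the content of the \emph{next} lemma; the definition only gives $G(b)\in\bar{\cL_\Z(\infty)_-}$, but that weaker fact already suffices, since $\bar{G(b)}\in\cL_\Z(\infty)_-$ yields $\bar{G(b)v^+_\lambda}=\bar{G(b)}\,v^+_\lambda\in\cL_\Z(\lambda)_-$, i.e. $G(b)v^+_\lambda\in\bar{\cL_\Z(\lambda)_-}$, so no appeal to (and hence no circularity with) the bar-invariance lemma is needed.
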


\begin{lem}
For $\zeta\in Q^-(l-1)$ and $b\in \cL_\Z(\infty)_-/q\cL_\Z(\infty)_-$, we have
\[\bar{G(b)}=G(b).\]
\end{lem}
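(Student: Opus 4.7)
The plan is to exploit the isomorphism $(G_l.1)$ together with the preceding lemma to reduce to a bar-invariance statement in the integrable modules $V(\lambda)_-$. Since bar fixes $\UUZ^-_-$ and swaps $\cL_\Z(\infty)_-$ with $\bar{\cL_\Z(\infty)_-}$, the element $\bar{G(b)}$ automatically lies in the source $(\UUZ^-_-)^\zeta \cap \cL_\Z(\infty)_- \cap \bar{\cL_\Z(\infty)_-}$ of the isomorphism $(G_l.1)$, so $\bar{G(b)} = G(b')$ for some unique class $b' \in \cL_\Z(\infty)_-^\zeta/q\cL_\Z(\infty)_-^\zeta$, and the task reduces to showing $b' = b$.

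To identify $b'$, reduce to the integrable quotients $V(\lambda)_-$. Since $v^+_\lambda$ is bar-invariant by convention and each $F_i$ is bar-fixed, the projection $\wp_\lambda: u \mapsto u v^+_\lambda$ intertwines bar. Combined with the preceding lemma $G(b)v^+_\lambda = G_\lambda(\hat\wp_\lambda b)$, this yields $\wp_\lambda(\bar{G(b)}) = \overline{G_\lambda(\hat\wp_\lambda b)}$. For $\lambda \gg 0$, $\wp_\lambda$ restricts to an injection on $(\UUZ^-_-)^\zeta$, so it suffices to establish $\overline{G_\lambda(c)} = G_\lambda(c)$ for $c = \hat\wp_\lambda b \in B(\lambda)_-^{\lambda+\zeta}$.

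Bar-invariance of $G_\lambda(c)$ will be proved by induction on $|\zeta|$, parallel to the $(G_l)$-induction. The base case $\zeta = 0$ reduces to $G_\lambda(v^+_\lambda) = v^+_\lambda$, bar-invariant by convention. For the inductive step, fix $i \in I$ and $n \geq 1$ with $c = \tf_i^n c'$ and $\te_i c' = 0$. The $V(\lambda)$-analogue of $(G_l.3)$ then gives $G_\lambda(c) \equiv F_i^{(n)} G_\lambda(c') \pmod{q\cL_\Z(\lambda)_-}$; the right-hand side is bar-invariant since divided powers are bar-fixed and $G_\lambda(c')$ is bar-invariant by induction. Hence both $G_\lambda(c)$ and $\overline{G_\lambda(c)}$ lie in the source of $(G_l.2)$ with the same image modulo $q\cL_\Z(\lambda)_-$, forcing equality.

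The main obstacle will be the super-sign bookkeeping in verifying the congruence $G_\lambda(c) \equiv F_i^{(n)} G_\lambda(c') \pmod{q\cL_\Z(\lambda)_-}$; this rests on a $\pi$-analogue of Lusztig's lemma asserting the uniqueness of bar-invariant representatives in the balanced triple $\cL \cap \bar{\cL} \cap \UUZ$ modulo $q$. In the super setting, careful tracking of the factors $\pi_i$ arising from the twisted bar involution $q \mapsto \pi q^{-1}$ is required, but the triviality of the bar action on the coefficient quotient $\A^\pi/q\A^\pi = \Q^\pi$ makes the Lusztig-style argument go through.
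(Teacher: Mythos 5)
Your reduction to $V(\lambda)$ is fine as far as it goes (bar commutes with $\wp_\lambda$, and $\wp_\lambda$ is injective on $(\UUZ^-_-)^\zeta$ for $\lambda\gg 0$), but the concluding induction has two genuine gaps. First, the congruence $G_\lambda(c)\equiv F_i^{(n)}G_\lambda(c')\pmod{q\cL_\Z(\lambda)_-}$ is not what $(G_l.3)$ gives: that statement only asserts $G(b)\in F_i^{n}\UU^-$, for $B(\infty)$ rather than for $B(\lambda)$, and upgrading such a membership to your congruence is essentially the hard content of Kashiwara's existence step (\cite[\S 7.2, 7.4--7.5]{Ka}); note in particular that $F_i^{(n)}$ does not preserve crystal lattices, so it is not even clear a priori that $F_i^{(n)}G_\lambda(c')$ lies in $\cL_\Z(\lambda)_-$. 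Second, and more fatally, even granting that congruence you cannot conclude that $\overline{G_\lambda(c)}$ and $G_\lambda(c)$ have the same image modulo $q\cL_\Z(\lambda)_-$: writing $G_\lambda(c)=F_i^{(n)}G_\lambda(c')+qx$ with $x\in\cL_\Z(\lambda)_-$ and applying the bar involution produces the error term $\overline{qx}\in q^{-1}\overline{\cL_\Z(\lambda)_-}$, which has no reason to lie in $q\cL_\Z(\lambda)_-$; since bar does not preserve $q\cL_\Z$, congruences modulo $q\cL_\Z$ cannot be conjugated. Repairing this along your lines would require the exact equality $G_\lambda(c)=F_i^{(n)}G_\lambda(c')$, i.e.\ that $F_i^{(n)}G_\lambda(c')$ lies in the balanced triple and reduces to $c$ --- again the hard part, and material that in the grand loop is established only after (and partly using) the present lemma, so the route also has a circularity risk.

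For comparison, the paper's proof is a two-line formal argument needing no passage to $V(\lambda)$ and no induction: set $Q=\frac{G(b)-\bar{G(b)}}{\pi q-q^{-1}}$. Since $\UUZ^-_-$ is spanned over $\Z[q,q^{-1}]$ by bar-invariant monomials in divided powers, $P-\bar{P}\in(\pi q-q^{-1})\UUZ^-_-$ for every $P\in\UUZ^-_-$, so $Q\in(\UUZ^-_-)^{\zeta}$; moreover $(\pi q-q^{-1})^{-1}\in q\A$ and $\overline{\pi q-q^{-1}}=-(\pi q-q^{-1})$, so $Q$ is bar-invariant and lies in $q\cL_\Z(\infty)_-\cap\overline{\cL_\Z(\infty)_-}$. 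Thus $Q$ lies in the source of the isomorphism of $(G_{l-1}.1)$ and has zero image in $\cL_\Z(\infty)_-/q\cL_\Z(\infty)_-$, whence $Q=0$ and $\bar{G(b)}=G(b)$. The ``divide by $\pi q-q^{-1}$'' trick is exactly what makes bar-invariance a formal consequence of the balanced-triple isomorphism, and it is the ingredient missing from your argument.
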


\begin{proof}
Set $Q=\frac{G(b)-\bar{G(b)}}{\pi q  - q^{-1}}$.  Then
$Q\in (\UUZ^-_-)^{\zeta}\cap q\cL_\Z(\infty)_-\cap \bar{\cL_\Z(\infty)_-}$, and
hence $Q=0$.
\end{proof}

Note that a super counterpart of the proof of \cite[Lemma~7.5.1]{Ka} that ``$G_i(b) =G_j(b)$" requires
the validity of  Proposition~\ref{prop:L*=L}.
The remaining components of the inductive proof of ($G_{l}.1$)-($G_{l}.3$)
proceed just as in \cite[\S7.4-7.5]{Ka}.

Now since ($G_{l}.1$)-($G_{l}.3$) hold in the cases $\diamond \in \{+,-\}$, we can lift them to the level of a formal parameter $\pi$,
and so ($G_{l}.1$)-($G_{l}.3$) for $\diamond =\emptyset$ also hold. Alternatively, one could go through Kashiwara's arguments
in the setting of $\diamond =\emptyset$ directly.

We summarize the main theorem on canonical bases.

\begin{thm}
Let $\diamond \in \{\emptyset, +, -\}$.
Then
\begin{enumerate}
\item
$\{G(b) \mid b \in B(\infty)\}$ forms a bar-invariant $\pi$-basis for $\UUZ^-_\diamond$.

\item
For every $\lambda\in P^+$,
$G(b) v^+_\lambda = G_\lambda(\hat \wp_\lambda b).$ Moreover,
$\{G_\la(b) \mid b \in B(\la)\}$ forms a bar-invariant $\pi$-basis for $V_\Z(\la)_\diamond$.
\end{enumerate}
\end{thm}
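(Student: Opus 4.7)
The theorem is essentially a consolidation of the inductive statements $(G_l.1)$--$(G_l.3)$ once one passes to the limit $l \to \infty$ and invokes Theorems~\ref{th:CrBinf} and~\ref{th:CrBinfla}; no new machinery is required beyond what has already been developed. I would first dispose of the case $\diamond = -$, then import $\diamond = +$ from \cite{Ka}, and finally lift to $\diamond = \emptyset$.

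For part (1), I would observe that $Q^- = \bigcup_{l \ge 0} Q^-(l)$, so the weight-graded isomorphisms in $(G_l.1)$ assemble into a single $\Z[q]$-isomorphism
\[
\bigoplus_{\zeta \in Q^-} \bigl( (\UUZ^-_-)^\zeta \cap \cL_\Z(\infty)_- \cap \overline{\cL_\Z(\infty)_-} \bigr) \isoto \cL_\Z(\infty)_-/q\cL_\Z(\infty)_-,
\]
with inverse $G$. By the remark that $\cL_\Z(\infty)_-/q\cL_\Z(\infty)_-$ is free over $\Z$ with $\pi$-basis $B(\infty)_-$, pulling back via $G$ exhibits $\{G(b) \mid b \in B(\infty)_-\}$ as a $\pi$-basis of $\UUZ^-_-$. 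Bar-invariance of each $G(b)$ is precisely the content of the lemma $\overline{G(b)} = G(b)$ proved above.

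For part (2), the identity $G(b) v^+_\lambda = G_\lambda(\hat\wp_\lambda b)$ is already one of the lemmas in the preceding subsection. Combining this with $(G_l.2)$, whose colimit in $l$ produces a $\pi$-basis $\{G_\lambda(b') \mid b' \in B(\lambda)_-\}$ of $V_\Z(\lambda)_-$, and with Theorem~\ref{th:CrBinfla}(ii) (asserting that $\hat\wp_\lambda$ restricts to a bijection from $\{b \in B(\infty)_- : \hat\wp_\lambda b \neq 0\}$ onto $B(\lambda)_-$), the $\pi$-basis claim and the projection identity follow immediately. Bar-invariance of $G_\lambda(b)$ is then either the $V(\lambda)$-analogue of the bar-invariance argument, or it follows from $\overline{G(b) v^+_\lambda} = \overline{G(b)} \cdot v^+_\lambda = G(b) v^+_\lambda$.

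The $\diamond = +$ case is due to Kashiwara \cite{Ka}. For $\diamond = \emptyset$, I would invoke the formal $\pi$-lifting already used at the end of Section~\ref{sec:grandloop}: the candidate basis elements and the relevant lattices are described by $\pi$-polynomial formulas that specialize consistently at $\pi = \pm 1$, so one lifts to a bar-invariant $\pi$-basis over $\Zpq$. The substantive obstacle is not in this synthesis step but upstream in the induction: the construction of $G_i(b)$ and its independence of $i$ relies, in Kashiwara's original setting, on an orthonormality characterization of $\cL(\infty)$. As noted in Remark~\ref{rmk:nonchar} and Example~\ref{ex:nonchar}, this characterization fails in the super setting, forcing us to appeal instead to the $\varrho$-stability statement Proposition~\ref{prop:L*=L}, whose proof is deferred to \cite{CFLW}. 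Once that input is accepted, the theorem itself is a direct synthesis.
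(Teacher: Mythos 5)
Your proposal is correct and follows essentially the same route as the paper: the theorem is exactly the colimit over $l$ of $(G_l.1)$--$(G_l.3)$ combined with the lemmas $\overline{G(b)}=G(b)$ and $G(b)v^+_\lambda=G_\lambda(\hat\wp_\lambda b)$, with $\diamond=+$ imported from Kashiwara, the super case hinging on Proposition~\ref{prop:L*=L} in place of the failed orthonormality characterization, and $\diamond=\emptyset$ obtained by the formal $\pi$-lifting. The only compression worth noting is that passing from the isomorphism in $(G_l.1)$--$(G_l.2)$ to an actual $\Z[q,q^{-1}]$-spanning statement for $\UUZ^-_\diamond$ and $V_\Z(\lambda)_\diamond$ uses the balanced-triple argument of \cite[Lemma 7.1.1]{Ka}, which the paper records as its first lemma in the inductive step.
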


The maximal variants $\{G(b)\} \cup \{\pi G(b)\}$ and $\{G_\la(b)\} \cup \{\pi G_\la(b)\}$ of the bases in the above theorem
would be a signed basis
in a more conventional sense. The bases in the above theorem (or these maximal variants) will be referred to as canonical bases.

Our canonical basis is a $\pi$-basis in the sense of Definition~\ref{dfn:pibasis}, but not a genuine basis in general.
We do not regard this as a defect of our construction though
as this is completely natural from the viewpoint of categorification (\cite{HW}): $\pi$ corresponds to ``spin" (i.e., a parity shift functor $\Pi$),
 each (projective) indecomposable module
$M$ comes from two ``spin states" $\{M, \Pi M\}$, and there is no preferred choice among $M$ and $\Pi M$ a priori.

\begin{example}
Assume that $I_1$ contains $i,j$ such that $a_{ij}=a_{ji}=0$. Then
$F_iF_j=\pi F_jF_i$. Both $F_iF_j$ and $F_jF_i$ are canonical basis elements in $\UU^-$, and there is no preferred choice among the two.
\end{example}


\end{document}